\def\annotate{0} %0 (to publish) or 1 (to work)
\newtheorem{theorem}{Theorem}
\newtheorem{example}{Example}
\newtheorem{lemma}[theorem]{Lemma}
\newtheorem{proposition}[theorem]{Proposition}
\newtheorem{remark}{Remark}
\newcommand{\tW}{ \widetilde{\mathsf W}}
\newcommand{\W}{ {\mathsf W}}
\newcommand{\C}{ {\mathsf C}}
\newcommand{\tC}{ \widetilde{\mathsf C}}
\newcommand{\R}{\mathbb{R}}
\newcommand{\Z}{\mathbb{Z}}
\newcommand{\Leb}{\mathcal{L}}
\newcommand{\tLeb}{\tilde{\mathcal{L}}}
\newcommand{\equlaw}{\stackrel{(d)}{=}}
\newcommand{\convlaw}{\stackrel{(d)}{\to}}
 \newcommand{\bn}{   \mathsf b_{n}}
\newcommand{\br}{\beta}%_2^{!}}
\newcommand{\1}{\mathbf{1}}
\newcommand{\md}{\mathrm{d}}
\newcommand{\tmu}{\tilde{\mu}}
\newcommand{\nupl}{\nu_{\textrm{Cl}}}
\newcommand{\E}{\mathbf  E}
\newcommand{\pr}{\mathbf  P}
\newcommand{\var}{\text{Var}}
\newcommand{\blue}[1]{\textcolor{blue}{#1}}
\newcommand{\REMOVE}[1]{}
\newcommand{\obsolete}[1]{\textcolor{gray}{{#1}}}
\newcommand{\RL}[1]{\textcolor{blue}{#1}}
\newcommand{\dy}[1]{\textcolor{blue}{#1}}
\newcommand{\remove}[1]{{\color{grey} #1}}
\newcommand{\obsolete}[1]{}%{\textcolor{gray}{{#1}}}
\newcommand{\RL}[1]{#1}%{\textcolor{blue}{#1}}
\newcommand{\dy}[1]{#1}%{\textcolor{purple}{#1}}
\newcommand{\remove}[1]{}
\renewcommand{\footnote}[1]{}
\title{Hyperuniformity and optimal transport of point processes}
\date{}
\author{Rapha\"{e}l Lachi\`eze-Rey\footnote{Lab. MAP5, Universit\'e Paris Cit\'e, and Inria Paris, France. Email: raphael.lachieze-rey@math.cnrs.fr} \, and D. Yogeshwaran\footnote{Theoretical Statistics and Mathematics Unit, Indian Statistical Institute, Bangalore, India. \\ Email: d.yogesh@isibang.ac.in}}
\begin{document}
\maketitle

\abstract{We examine optimal matchings or transport between two  stationary random measures. It covers allocation from the Lebesgue measure to a point process and matching a point process to a regular (shifted) lattice. The main focus of the article is the impact of hyperuniformity (reduced variance fluctuations in point processes) to optimal transport: in dimension $2$, we show that the typical matching cost has finite second moment under a  mild logarithmic integrability condition on the reduced pair correlation measure, showing that most planar hyperuniform point processes are $ L^2$-perturbed lattices. Our method also retrieves known sharp bounds in finite windows for neutral integrable systems such as Poisson processes, and also applies to hyperfluctuating systems. Further, in three dimension onwards, all point processes with an integrable pair correlation measure are $L^2$-perturbed lattices without requiring hyperuniformity. %The proof relies on the estimation of the optimal transport cost  between point processes restricted to large windows for a well-chosen cost through their Fourier-Stieljes transforms, related to their structure factor. The existence of an infinite matching is obtained   through a compactness argument on the space of random measures.
} \\

\noindent \textsc{Keywords:} Hyperuniformity, optimal transport, {matching}, structure factor, scattering intensity, random measures. \\

\noindent \textsc{MSC2020 Classification:} 60G55, 49Q22 ; 60D05, 60B15, 42A38.
%%\tableofcontents

\section{Introduction}

The celebrated AKT (Ajtai, Koml\'os and Tusn\'ady) theorem \cite{ajtai1984optimal} states that, if one draws $2n$ i.i.d.   uniform points $ X_{1},\dots ,X_{n},Y_{1},\dots ,Y_{n}$ in a two-dimensional cube with volume $n$, it is possible to pair each $ X_{i}$ with an exclusive $ Y_{\sigma (i)}$ for some permutation $ \sigma $  such that the typical length $|X_{1}-Y_{\sigma (1)}|$ is of the order $ \sqrt{\ln(n)}$ with high probability, and it is the best possible rate. The role of the dimension $ 2$ is very important, as for the same statement in dimension $ 1$, the typical distance is of order $ \sqrt{n}$, and in dimension $ d \geqslant 3$ it is of order $ 1.$ This result has known many refinements and generalisations such as replacing the total distance by weighted versions $ \sum_{} | X_{i}-Y_{\sigma (i)} | ^{p},p>0$,  drawing points with a non-uniform law on the cube, giving large deviations  \cite{fournier2015rate}, computing exact {asymptotic constants  \cite{ambrosio2019pde,huesmann2024asymptotics} etc}. Using the framework of optimal transport, these asymptotics are equivalent to the cost of optimal transport   {between} the measure {$\nu = \sum_{i=1}^n\delta_{Y_i}$ and the measure $\mu = \sum_{i=1}^n\delta_{X_i}$} on the cube. We refer the reader to \cite{shor1991minimax,talagrand2014upper,ledoux2022optimal} for surveys of such results and for a sampler of other recent developments, see \cite{barthe2013combinatorial,bobkov2021simple,goldman2023optimal}.

One may also study an {\it infinite volume} version of the AKT theorem by finding a {`good'} matching between infinite samples $ \mu  = \sum_{i}\delta _{X_{i}}$ and $ \nu  = \sum_{j}\delta _{Y_{j}}$,  {i.e. with minimal cost per unit volume,} where {$\mu$ and $\nu$ are independent unit intensity homogeneous Poisson processes. One may also consider the case of $\nu$ being lattice measures, or the Lebesgue measure in which case, matchings are replaced by transport maps.} It is shown in  \cite{holroyd2009poisson} that   {in the Poisson case} there is a {(randomized) bijection $T:\mathbb{Z} ^{d} \to  { \rm supp}  ( \mu)$ with the  typical distance $ X = \|T(0)\|$,  such that
\begin{equation}
\label{e:d2mom_Poisson}
\mathbf{E}\left(\frac{X^{d/2}}{1 + |\ln(X)|^{\gamma}}\right)<\infty \textrm{ for $d=1,2$, any $\gamma > 1$    and } \mathbf{E}(X^2) < \infty \textrm{ for $d \geqslant  3$},
\end{equation} %\footnote{  {I checked this using the following inequality: for $ x,r>10, x^{d/2}\ln(x)^{-\gamma }>r$ implies $ x^{d/2}>cr\ln(r)^{\gamma }$ (by contradiction)}}
and $\mathbf{E}(X^{d/2}) = \infty$ in $d = 1,2$ i.e., the bounds are sharp in these dimensions; see Remark \ref{r:HPPS_comparison}}. In higher dimensions, even finite exponential moments are proven {to exist} but we shall focus only on finite second moments; see \cite{hoffman2006stable,holroyd2009poisson,huesmann2013optimal,holroyd2022minimal}.

The starting point of this article is to replace $\mu$ and/or $\nu$ by an {arbitrary} stationary random point processes and consider matchings between $\mu$ and $\nu$ and equivalently transport {maps or plans between} the Lebesgue measure  {and} $\mu$. In specific planar cases such as zeros of Gaussian entire functions (GEF) or Ginibre point process, existence of good matchings have been shown.
 In particular, \cite{sodin2006random} shows exponential moments for the matching cost between zeros of GEF and lattice and \cite{jalowy2021wasserstein,prod2021contributions} prove respectively finite first and second moments for the transport {of Ginibre ensemble} to the Lebesgue measure.
Both these point processes are known to be hyperuniform ({also known as {\it super-homogeneous}}) i.e., the asymptotic {number variance (i.e. variance of the number of points on a large window)} grows more slowly than volume order, unlike the {Poisson point process \cite{torquato2003local,gabrielli2002glass}}. {Hyperuniform (HU)} processes appear in various contexts such as statistical physics, biology,  material sciences, random matrices, dynamical systems, numerical integration  \cite{torquato2018hyperuniform,coste2021order}.  A concise description coined by physicists about HU processes is ``global order and local disorder'', meaning that even though the particles are locally disordered, exhibit isotropy and asymptotic independence, they should behave at large scales as lattices, and should in some sense be arranged in a nice orderly manner. {A natural question is therefore whether such processes are well distributed in space, and we believe that this should be reflected by their transport properties, in the sense that a nicely spread point sample should be easy to transport to the Lebesgue measure, or equivalently to a regular lattice.} {Notable examples of hyperuniform point processes are  constructed as a good matching to a lattice; see \cite{peres2014rigidity,andreas2020hyperuniform,Lotz2023}.}

We see from  \eqref{e:d2mom_Poisson} that this property ({existence of a matching with finite second moment for typical cost}) is true in dimension $ d\geqslant 3$ for Poisson processes, and we will \dy{prove} that it extends to other processes, whether HU or not, as long as they are not hyperfluctuating. In dimension $ 2$, {we answer the question {on existence of 'good' matchings for HU processes in the affirmative}}. In other words,   {we show that for a  HU process $\mu$ in dimension $ d = 2$}, {identifying $ \mu $ with its support,} under mild logarithmic integrability conditions on the reduced pair correlation function, there is a $L^{2}$ process $ \{T(k);k\in \mathbb{Z} ^{2}\}$  {invariant under $ \mathbb{Z} ^{2}$ translations} such that {$\mu  = \sum_{k\in \mathbb{Z} ^{2}} \delta_{k + T(k)}$}, meaning it is a $L^{2}$ perturbed lattice. {Many known HU processes satisfy this condition, including the Ginibre point process and zeros of Gaussian entire function.} {An example of a HU process with infinite $L^1$-matching has been provided in \cite[Theorem 2(2)]{huesmann2024link} indicating the necessity of some additional integrability condition but whether logarithmic integrability is required or not is moot.  }
Our interest in $L^2$ perturbed lattices is also in part due to the following recent results on perturbed lattices due to \cite{DFHL}. Namely it was shown that $L^{2}$ perturbed lattices are HU but  for any $ \delta > 0$, there exist $L^{2-\delta}$ perturbed lattices which are not hyperuniform.

En route to our result for HU processes, we generalize the above bounds \eqref{e:d2mom_Poisson} for Poisson processes to stationary random point processes with integrable reduced pair correlation function (RPCM).  {This is to say that in magnitude, the matching bounds for point processes with number variance growing at most of volume-order cannot be worser than that for the Poisson process.} Our proofs proceed by bounding the transport or matching cost on large finite  windows via the bounds in \cite{bobkov2021simple} involving the
expected  squared Fourier-transform of the empirical measure of $\mu$. {These are strongly} related to the  {\it scattering intensity,}  which is known to converge to the Fourier transform of the RPCM, also called  {\it the structure factor}. These bounds work well for first and second moments but not higher moments. Vanishing of the structure factor at the origin is equivalent to hyperuniformity and quantifying the rate of vanishing on large windows and for small frequencies are the key estimates needed for our proof. Then we transfer these bounds to the infinite sample  {exploiting tightness and that the finite point processes (i.e., point process on finite windows) converge in law to the infinite sample in the vague topology}. This is in contrast to other proofs which use Whitney-type decomposition with error terms estimated via potential theory or PDE methods. Also the other works that derive bounds using the Fourier-analytic method, exploit 'weak independence' or 'mixing' of point processes to derive Poisson-like bounds but these do not exploit the vanishing of the structure factor. Our bounds in the finite sample case are explicit and {hold}  for all point processes without assumption of integrable reduced pair correlations.  Also in the infinite case, we give explicit bounds for matching when the logarithmic integrability condition is not satisfied by the hyperuniform point processes.  {Independently, \cite{butez2024} have obtained $p$-Wasserstein transport rates of HU processes on large windows under other additional assumptions on moments and concentration bounds.}
 {\cite{huesmann2024link} also recently obtain that HU processes are $L^{2}$ perturbed lattices, assuming finite Coulomb energy, another form of strong hyperuniformity.}
We give a more detailed comparison with the literature after Theorem \ref{thm:hu}.

{\bf Plan of the paper:} In the next subsection, we introduce some preliminaries on point processes. We state our main results and illustrate with examples in Section \ref{sec:mainresults}. We give more detailed proof outlines as well as comparisons with the existing literature. We also discuss the extension of our results to random measures in Section \ref{sec:extensionrm}. The proofs of our main results are in Section \ref{sec:proofs}. In that section, we also state the Fourier-analytic bounds for probability measures due to Bobkov and Ledoux as well as a general proposition that helps us to transfer linear transport cost between large finite samples to finite typical cost for the transport between infinite samples.

\subsection{Preliminaries}
We shall briefly introduce the required point process notions here and for more details, refer to \cite{hough2009zeros,kallenberg2017random,last2017lectures,
baccelli2020random} {and for the purposes of this article, \cite{coste2021order} also suffices.}

 {A {\it random measure} is a random element taking values in the space of locally-finite (or Radon) measures on $\R^d$ equipped with the evaluation $\sigma$-algebra.} A {\it point process} is a random measure  taking values in the space  {$  \mathscr   N$} of locally-finite counting measures on $\R^d$ equipped again with the evaluation $\sigma$-algebra. It is {\it simple} if a.s., $\mu(\{x\}) \in \{0, 1\}$ for all $x \in \R^d$, and   stationary or translation invariant if $\mu + x \overset{d}{=} \mu$ where $(\mu + x)(\cdot) := \mu(\cdot - x)$ is the shifted measure. {The set notation} is sometimes abusively applied to a simple point process by assimilating it to its support. Common examples of stationary point process are the shifted lattice $\sum_{z \in \Z^d}\delta_{z+U}$ for $U$ uniform in $[0,1]^d$, the homogeneous Poisson process, the infinite Ginibre ensemble, and zeros of Gaussian entire functions; see Section \ref{sec:examples} or \cite[Section 3]{coste2021order}.

 {Given a stationary point process, its first intensity measure $\mathbf{E}(\mu(\cdot))$ is proportional to the Lebesgue measure, and we shall assume that it is indeed the Lebesgue measure $ \Leb.$ Equivalently, {\bf we assume that all our point processes have unit intensity} i.e., $\mathbf{E}(\mu(\cdot)) = \Leb(\cdot).$ }

When $ \mu ,\nu $ are  {simple} point processes, {we abusively also denote by $ \mu ,\nu$ their supports}, and we call {\it matching} between $ \mu ,\nu $ a random one-to-one {map $ T:\nu \to \mu $.} If {$ \nu   = \Leb$, an {\it allocation scheme}  is a mapping $ T:\mathbb{R}^{d}\to     \mu$} that is also a transport map, meaning that a.s., $ \Leb(T^{-1}(A)) = \nu (A)$ for any $ A\subset \mathbb{R}^{d}$.
Whether we talk of matching or allocation scheme, we also require that $ T$ is  {\it invariant } (under simultaneous translations), i.e. the mass transported from $A$ to $B$ measurable subsets of $ \mathbb{R}^{d}$ satisfies for a shift $x\in  \mathbb R  ^{d}$ \begin{align*} \nu ( T^{-1}(A + x)\cap (B + x) ) \equlaw \nu ( T^{-1}(A)\cap B ).
\end{align*}{Without the simplicity assumption, such invariant mappings might not exist, which is why simplicity is always assumed in results about matching}. We remark that there always exist a (non-randomized) translation invariant allocation scheme $ T$ from $ \mathbb{R}^{d}$ to a simple point process $ \mu $ with  {unit} intensity, see Theorem 4 of \cite{hoffman2006stable}, where such an allocation is built with the Gale-Shapley algorithm on stable marriage (this particular procedure might be sub-optimal). The question is then to find a (different) $ T$ such that $ \|T(0)\|$ has the smallest  possible tail, or the largest possible cost function $ w$ such that for some $ T,$ $ \mathbf{E}(w(\|T(0)\|))<\infty $.

All the matchings or transport maps we consider are {\it randomized} matchings i.e., could depend on additional sources of  randomness apart from $\mu,\nu$. These concepts descend from the more general concept of  {\it coupling} between $ \mu $ and $ \nu $ in optimal transport, see Sections \ref{sec:ot_large} and \ref{sec:global}.
Given  two possibly dependent stationary point processes $ \mu ,\nu $,  Holroyd et al. \cite{holroyd2009poisson}  say that there is a matching between them with distribution function $ F$ if for some randomized invariant matching $ T$ between them
\begin{align}
\label{def:typical_matching_dist}
F(r) &:= (2\pi)^{-d} \, \mathbf{E} \, \dy{\nu\{x \in \Lambda_1 :  \|T(x)-x\|  \leqslant r \}}\,  , %\nonumber
\end{align}where $\Lambda_1 = (-\pi,\pi]^d$.
 {  {To be slightly more general, we shall say that  {\it  there is an invariant coupling between $ \mu $ and $ \nu $ whose typical transport distance $ X$ has distribution function $ F$}
if there exists on some probability space a  random  measure $ M$   on $ \mathbb{R}^{d}\times \mathbb{R}^{d}$ such that $ (M(\cdot ,\mathbb{R}^{d}),M(\mathbb{R}^{d},\cdot ))$ has the same law as $ (\mu ,\nu )$, and}
%\xout{For  two point processes  $ \mu ,\nu $ are general point processes, define \textit{the typical cost} $X$ under $T$ to be the random variable whose law is defined}
%
\begin{align}
\label{def:typical-dist}F(r) %&:= (2\pi)^{-d} \, \mathbf{E}\big( \nu \{x \in \Lambda_1 : \|T(x)-x\|  \leqslant r\} \big) \\
& = (2\pi)^{-d} \, \mathbf{E} M(\{(x,y):x\in \Lambda _{1},\|x-y\|\leqslant r\})%  \1 ( \|T(x)-x\|  \leqslant r ).\,   %\nonumber
,
\end{align}
\RL{and}
\dy{the random measure $M$ is {\em invariant} under simultanenous translations or equivalently {\em equivariant},  \RL{i.e.} $M(x + \cdot ,x + \cdot ) \equlaw M$   for all $x \in \R^d$.} This coupling is a matching if it can a.s. be represented by a one-to-one mapping $ T$, i.e. \RL{$ M = \sum_{x\in \nu'}  \delta _{\dy{(T(x),x)}}$ a.s., where $\nu'  = M(\mathbb{R}^{ d},\cdot ).$ Observe that matchings induced by invariant couplings are translation-invariant.}

There is also a formulation of typical matching or transport distance in terms of Palm distribution of $\mu$ or $\nu$ but we  \RL{do not require it here}; see \cite[Section 2]{holroyd2009poisson}. Also {when $\mu $ is simple} it is easy to see that $\mathbf{E}(w(X)) < \infty$ for some  function $ w:\mathbb{R}_{ + }\to \mathbb{R}_{ + }$ iff  
\begin{align}
\label{e:typical_cost_w}
\mathbf{E}\sum_{x\in \RL{ \nu' } \cap \Lambda _{1} }w(\|x-T(x)\|)<\infty .
\end{align}
The LHS above can be defined as the cost of transport between $\mu,\nu$ with respect to $w$; see \cite[(2.4)]{erbar2023optimal}.

A key descriptor of (unit intensity) point processes is the reduced pair correlation measure (RPCM) $ \beta $ of $ \mu $ {and as indicated by the name, it measures the asymptotic independence at the second order. Informally,}
\begin{align*}\beta (dx) = \mathbf{P}(\md x\in \mu \;|\;0\in \mu )-\mathbf{P}(dx\in \mu ).
\end{align*}
More formally, \dy{{\bf we assume implicitly that $\mu (A)$ has finite second moment for any bounded set $A$ whenever $ \beta $ is mentioned}}, in which case $ \beta $ is a signed measure \RL{over compact sets} characterised by the relation
\begin{align}
\mathbf{E}\sum_{x\neq y\in \mu }\varphi (x)\psi (y) &=  {\mathbf{E} \int \int \1[x \neq y]\varphi (x)\psi (y)\mu(\md x)\mu(\md y)}   \nonumber \\
& = \int \int \varphi (x)\psi (y)\md x \, \md y + \int \int \varphi (z)\psi (x + z) \md z \beta (\md x)\label{eq:def-beta}
\end{align}
for  non-negative compactly supported continuous functions $ \varphi ,\psi $. For instance, if $ \mu $ is a homogeneous Poisson process, or if $ \mu  = \Leb$, then $ \beta  = 0.$ Denote by $| \beta |$ the total-variation of $ \beta $, and say that $ \beta $ is integrable if $  | \beta  | (\mathbb{R}^{d})<\infty .$ We remark that existence of second moments guarantees that  $| \beta  |$ is locally integrable {and furthermore stationarity of $\mu$ implies that $\beta$ is an even function i.e., $\beta(\md x) = \beta(-\md x)$; for example, \dy{see \cite[Section 8]{DvJ08} for more on second order properties of random measures.}

 {A stationary point process $\mu$ is said to be {\it hyperuniform} (HU) if the {\it reduced variance} satisfies}
\begin{equation}
\label{e:sigman}
\sigma_{\mu}(n) := \frac{\var(\mu (\Lambda _{n}))}{\Leb(\Lambda _{n})} \to 0, \, \, \textrm{as $n \to \infty$},
\end{equation}
where $\Lambda_n = (-\pi n^{\frac{1}{d}},\pi n^{\frac{1}{d}}]^d$. Assuming integrability of the RPCM $\beta$, it is known that the choice of window shapes is not relevant i.e., instead of $\Lambda_n$ one can consider $n^{1/d}W$ for any convex set $W$ containing the origin in its interior; for example, see  \cite[Proposition 2.2]{coste2021order}.

The RPCM  is also central in the expression of the  {\it number variance }
\begin{align}
\label{e:variance_expression}
\var(\mu (A)) = \Leb(A) + \int_{A}\int_{z + A}\, \beta (\md x)\, \md z, \, \, A\subset \mathbb{R}^{d}.
\end{align}
 {Assuming integrability of RPCM $\beta$, if $\beta(\R^d) = \lim_{n\to \infty }\beta (\Lambda _{n}) = -1$, we have the variance reduction $ \var(\mu (\Lambda _{n})) = o(\Leb(\Lambda _{n}))$ and thus the process is HU.} The statistical study of a stationary point process $ \mu $ in the Fourier domain is often realised through its  {\it structure factor} defined via
\begin{align}
\label{e:structfact}
S(k) := 1 +\int_{}e^{-ik\cdot x}\br(\md x), \, \, k \in \R^d,
\end{align}
when it is well defined. This is the case if for instance the RPCM  $\beta$ is integrable. In this situation, HU can be characterized also as $S(0) = 0 $. {Some of these implications hold even without assuming integrability of $\beta$; see \cite[Theorem 3.6]{bjorklund2023hyperuniformity}.}

\RL{
Let us finally give a toy example \dy{of an HU process} with an integrable RPCM which is also a perturbed lattice; to our knowledge it was first emphasized in  \cite{cloak}. It is called  {\it cloaked} peturbed lattice, denoted by $ \nupl$, because the periodic component is hidden at the second order, or cloaked, by the vanishing of the Fourier transform of the $ U_{ k}$ on the grid. It helps make a bridge between disordered systems such as Poisson or determinantal, and perturbed lattices.
  \begin{example}
  \label{ex:iidpertlattice}
Consider the shifted lattice perturbed by  i.i.d variables $U_{ k},k\in \mathbb{Z} ^{ d}$ uniform  in $[0,1]^{ d}$ 
\begin{align*}
  \nupl  := \{k + U + U_{ k};k\in \mathbb{Z} \},
\end{align*}
where $U$ is also a uniform random variable in $[0,1]^{ d}$ and independent of the $U_k$'s. For $ \varphi ,\psi $ as above,
\begin{align*}
 \mathbf E \sum_{x\neq y\in   \nupl }\varphi (x)\psi (y) = &\sum_{k\neq m\in \mathbb{Z} ^{ d}}\int_{([0,1]^{ d})^{ 3}}\varphi  (k + u + v)\psi (m + u + w)\, \md u \, \md v \, \md w \\
  = &\int_{[0,1]^{ d}}\sum_{k, m\in \mathbb{Z} ^{ d}}\left(
\int_{[0,1]^{ d}}\varphi (k + u + v) \, \md v\int_{[0,1]^{ d}}\psi (m + u + w)\dy{\md w}
\right)\dy{\md u}\\
& \hspace{2cm}  - \int_{[0,1]^{ d}} \, \sum_{k\in \mathbb{Z} ^{d}} \left(
\int_{([0,1]^{ d})^{ 2}}\varphi  (k + u + v) \, \psi (k + u + w)\, \md v \, \md w
\right)du \\
&  {=  \int_{\mathbb{R}^{d}} \, \varphi \int_{\mathbb{R}^{ d}}\psi  -\int_{[0,1]^{ d}}\sum_{k\in \mathbb{Z} ^{ d}}\left(
\int_{([0,1]^{ d})^{ 2}} \, \varphi  (k + u + v) \, \psi (k + u + w)\md v \, \md w
\right)du}.
\end{align*}
 {Swapping the order of summation and integrals, the second term can be re-written as}
\begin{align*}
& \int_{[0,1]^{ d}}\sum_{k\in \mathbb{Z} ^{ d}}\left(
\int_{([0,1]^{ d})^{ 2}} \, \varphi (k + u + v) \, \psi (k + u + w)\md v \, \md w
\right)du \\
& = \int_{\R^d \times \R^d \times \R^d} \varphi (z+v) \, \psi(z+v+w) \, \1[v \in [0,1]^d, v + w \in [0,1]^d] \md z \, \md v \, \md w \\
& = \int_{\R^d \times \R^d}\varphi (z) \, \psi(z+w) \, \Leb([0,1]^d \cap ([0,1]^d - w)) \, \md z \, \md w,
\end{align*}
\dy{  and so from the characterisation  \eqref{eq:def-beta}, we see  that the RPCM of the i.i.d. perturbed lattice $\nupl$ is $ \beta(\md x) = \Leb([0,1]^d \cap ([0,1]^d - x)) \, \md x$.} Thus the RPCM is supported by $ B(0,2\sqrt{d})$ and clearly integrable. Furthermore, the obvious matching $T(k) = k + U + U_k, k \in \Z^d$ from $\Z^d$ to $\mu$ has typical matching distance $X \leq 2\sqrt{d}$ a.s.. \dy{Also, we know that $\nupl$ is hyperuniform; for instance, see \cite{bjorklund2024hyperuniformity}.}
\end{example} }

\section{Main results}
\label{sec:mainresults}
In this section, we state our main results. Firstly, we state our global matching result {(Theorem \ref{thm:std})} for stationary processes with integrable RPCM which shows that {generic point processes {in dimension $d \geq 3$} cannot have worser matching cost bounds than that of the Poisson process}. Further, assuming hyperuniformity, we show in {Theorem \ref{thm:hu}} that the bounds can be improved {in dimensions $d = 1,2$}. We also discuss our results in the context of other results, outline our proof ideas and provide examples of point processes verifying the above bounds. {The proofs  proceed via bounds for matching or transport cost between a point process and Lebesgue measure on large boxes. This is the content of Theorems \ref{thm:Wbd-infinite} and \ref{thm:Wbd-infinite-HU}, which are of independent interest and can also be viewed as an equivalent formulation of good transport between point processes.} The proofs of all these main theorems - Theorems \ref{thm:std}, \ref{thm:hu}, \ref{thm:Wbd-infinite} and \ref{thm:Wbd-infinite-HU} - are deferred to Section \ref{sec:proofs}. In Section \ref{sec:extensionrm}, we comment on extension to random measures.

{We begin by stating our global matching bound that shows that the infinite volume version of AKT bounds for the Poisson process as in \eqref{e:d2mom_Poisson} hold for most point processes}.
\begin{theorem}
\label{thm:std}
 Let $ \mu ,\nu $ be independent {simple} stationary processes  with unit intensity and having integrable RPCM.  There exists a translation-invariant matching $T$ between $\mu$ and $\nu$ such that the typical matching distance $X$ (\dy{see  \eqref{e:typical_cost_w}}) satisfies the following bound
\begin{align*}
\mathbf{E}w( X  )<\infty
\end{align*} for
\begin{align*}
w(x)=
\begin{cases}
\frac{ \sqrt{x}}{1+ | \ln(x) | ^{\gamma }}$  if $d= 1\\
\frac{ x}{1+ | \ln(x) | ^{\gamma }}$  if $d= 2\\
x^{2}$  if $d\geqslant 3.
 \end{cases}
\end{align*}
{for any} $\gamma > 1$. {The above bounds also hold if $\nu$ is the shifted lattice.}
\end{theorem}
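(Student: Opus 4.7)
The plan is to proceed in two stages. First I would establish an upper bound on the $w$-transport cost between $\mu$ and $\nu$ restricted to a large window $\Lambda_n$, and then transfer this bound to the infinite setting through the compactness/weak-convergence statement of Theorem \ref{thm:Wbd-infinite}.

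For the window estimate, I would invoke the Fourier-analytic inequalities of Bobkov--Ledoux announced in the introduction: they control the $w$-transport cost between two finite measures on a box by a weighted $\ell^2$-sum of squared Fourier-Stieltjes coefficients of their difference. Applied to $\mu_n := \mu|_{\Lambda _{n}}$ and $\nu_n := \nu|_{\Lambda _{n}}$, these bounds take the schematic form
\begin{equation*}
\mathcal{C}_w(\mu_n,\nu_n) \;\lesssim\; \sum_{k\in 2\pi n^{-1/d}\mathbb{Z}^d\setminus\{0\}} h_w(k)\,|\widehat{\mu_n}(k)-\widehat{\nu_n}(k)|^2,
\end{equation*}
where $h_w$ is an explicit kernel dictated by $w$. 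Taking expectations and using the independence of $\mu$ and $\nu$, a Campbell-type computation based on \eqref{eq:def-beta} and \eqref{e:structfact} reveals that $\mathbf{E}|\widehat{\mu_n}(k)-\widehat{\nu_n}(k)|^2$ is essentially $|\Lambda_n|(S_\mu(k)+S_\nu(k))$ up to a Fej\'er-type convolution error, while the integrability of the RPCMs yields the uniform bound $S_\mu,S_\nu \leqslant 1+|\beta_\mu|(\R^d)+|\beta_\nu|(\R^d)$. One thus obtains
\begin{equation*}
\mathbf{E}\,\mathcal{C}_w(\mu_n,\nu_n) \;\lesssim\; |\Lambda_n|\sum_{k\neq 0} h_w(k),
\end{equation*}
and the weight appearing in the cost $w$ has been tuned precisely so that the Fourier sum on the right converges: in $d\geqslant 3$ the natural kernel $h_w(k)\asymp|k|^{-2}$ produced by the quadratic cost $w(x)=x^2$ is already summable; in $d=2$ the borderline divergence of $\sum|k|^{-2}$ is cured by the $(1+|\ln x|^{\gamma})^{-1}$ factor for any $\gamma>1$; and an analogous borderline argument applies in $d=1$ with $w(x)=\sqrt{x}/(1+|\ln x|^{\gamma})$ using the square-root/Sobolev smoothing built into $h_w$.

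Once the uniform-in-$n$ estimate $\mathbf{E}\,\mathcal{C}_w(\mu_n,\nu_n)=O(|\Lambda_n|)$ is in hand, I would feed it into Theorem \ref{thm:Wbd-infinite}, whose role is to convert a sequence of nearly optimal couplings on growing windows with bounded average cost per point into a genuine translation-invariant matching on all of $\R^d$ whose typical cost satisfies $\mathbf{E}(w(X))<\infty$, via tightness on the space of random measures and a Palm-type disintegration. The shifted-lattice case $\nu=\sum_z \delta_{z+U}$ is in fact easier, because then $\widehat{\nu_n}$ is deterministic (up to the random shift $U$) and only $S_\mu$ survives on the Fourier side.

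The main obstacle is the borderline behaviour in $d=2$: extracting exactly the threshold $\gamma>1$ requires specialising the Bobkov--Ledoux inequality to the cost with this precise logarithmic weight so that $h_w(k)$ kills just enough of the $|k|^{-2}$ singularity at the origin to yield summability but not more. A secondary, more technical point is the comparison between the finite-window scattering intensity $|\Lambda_n|^{-1}\mathbf{E}|\widehat{\mu_n}(k)|^2$ and the limiting structure factor $S_\mu(k)$ uniformly in $n$ and in $k$; this is where the integrability of $|\beta|$ enters decisively (through a Fubini/dominated-convergence argument on \eqref{eq:def-beta}), and it is also what would fail and necessitate the stronger hyperuniform hypotheses of Theorem \ref{thm:hu}.
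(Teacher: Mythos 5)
Your two-stage plan (Fourier-analytic finite-window estimate, then local-to-global compactness) matches the paper's overall strategy, but there are two concrete problems.

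First, you misattribute the roles of the two auxiliary results. Theorem~\ref{thm:Wbd-infinite} \emph{is} the finite-window Fourier bound, not the compactness machinery. The step that converts a uniform bound $\E\widetilde\C_w(\tilde\mu_n,\tilde\nu_n)\leqslant cn$ into an infinite-volume, translation-invariant object with finite typical cost is Proposition~\ref{p:matching_limit}, which proceeds by stationarizing the finite-window couplings over randomly shifted tiles, establishing tightness, and passing to a vague limit; your outline never names this step, and the ``Palm-type disintegration'' description is not quite what is done there.

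Second, and more substantively, your direct Fourier comparison of $\widehat{\mu_n}-\widehat{\nu_n}$ would produce only a \emph{coupling} (a transport plan), whereas Theorem~\ref{thm:std} asserts an actual \emph{matching}, i.e.\ a one-to-one map. This upgrade is the content of Proposition~\ref{p:matching_limit}(2), and it genuinely needs the point-process-to-Lebesgue bound $\E\widetilde\C_w(\tilde\mu_n,\tLeb_n)\leqslant cn$ rather than the point-process-to-point-process one. The reason is that the argument replaces $\tLeb_n$ by a renormalised lattice $Z_n$ having the same number of atoms as $\tilde\mu_n$, each of equal mass (Lemma~\ref{lm:CwmunZn}); only then does the Birkhoff--von Neumann theorem (Proposition~\ref{prop:birkhoff}) guarantee that the optimal finite-window coupling is realised by a one-to-one map, and the two resulting matchings $\mu\to\Z^d$, $\nu\to\Z^d$ are then composed. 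Your direct comparison bypasses this: the atoms of $\tilde\mu_n$ and $\tilde\nu_n$ carry different masses $n/\mu(\Lambda_n)$ and $n/\nu(\Lambda_n)$, so their optimal coupling is not a matching, and nothing in your outline recovers one. This is precisely why the paper compares each process to Lebesgue separately and glues by the triangle inequality, rather than running Bobkov--Ledoux on the difference $\mu_n-\nu_n$.

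Minor points: $\mu|_{\Lambda_n}$ and $\nu|_{\Lambda_n}$ generally have different total masses, so one must renormalise to $\tilde\mu_n=(n/N)\mu 1_{\Lambda_n}$ (and $\tilde\nu_n$) before applying Theorem~\ref{thm:BL}; the window bound is in toroidal cost $\widetilde\C_w$ and a toroidal-to-Euclidean comparison is needed inside the limit. Your discussion of the summability of the Fourier weights (condition~\eqref{ass:summ-w}) and why the logarithmic correction in $d=1,2$ makes the borderline sum converge is correct in spirit.
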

%
%\sout{Suppose that $\mu$ and $\nu$ are dependent point processes. Then it is not clear if they can be constructed on the same probability space as $\mu,\nu$ because our matchings are constructed via weak limits.  }  \footnote{mentionned later}
%
\begin{remark}
\label{r:HPPS_comparison}
The bounds for $d=1,2$ are obtained following the methods of \cite[Theorem 1(ii)]{holroyd2009poisson} and are stated here for completeness.  {  We} can show that there exists a matching from $\nu$ to $\mu$  \RL{whose typical distance $ X$ satisfies}%
\begin{equation}
\label{e:M0-inf_matching}
 \pr(X \geqslant  r) \leqslant cr^{-d/2} \sqrt{\sigma_{\mu}(r) + \sigma_{\nu}(r)}, \, \, r \geq 1,
\end{equation}
for some finite $ c$ and with
 $\sigma_{\cdot}(r)$ as defined in \eqref{e:sigman}; {see the proof of Theorem \ref{thm:std} \RL{at Section  \ref{sec:proofs_thms12}} for more details on this derivation.}

Thus, in the integrable RPCM case, $\sigma $ is bounded and so we have that $\E(X^{\alpha}) < \infty$ for all $\alpha < d/2$ and in $d =1,2$, we have $\E(w (X)) < \infty$ for $w $ as above.   So, our Theorem \ref{thm:std} improves this bound in $d  = 3,4$ and in $d \geq 5$, \cite[Theorem 1(ii)]{holroyd2009poisson} (i.e., \eqref{e:M0-inf_matching}) yields better bounds.
\end{remark}

%
% \begin{remark}
%We briefly discuss the independence assumption.  \footnote{Compactify and move?}  Assume $ \mu ,\nu $ are two (possibly dependent) simple stationary processes. If they both satisfy the hypotheses of Theorem  \ref{thm:std}, one can build invariant matchings $ T, T'$  between versions of $\mathbb{Z} ^{d} + U$ and resp. $ \mu ,\nu $, with $ U$ an independent uniform variables in $ [0,1]^{d}$, with typical matching distance satisfying $ \mathbf E w(X)<\infty $. There is also a natural matching $ S$ with bounded matching distance between any two versions of $ \mathbb{Z} ^{d} + U$, hence one wishes to conclude that $ T^{-1} \circ S\circ T'$ is an invariant matching between $ \mu $ and $ \nu $. Unfortunately, we cannot guarantee that $ T,T'$ can be built on the same probability space as $ (\mu ,\nu )$, but in general it seems plausible that the independence assumption could be dropped.
%\end{remark}

\REMOVE{\sout{In the above theorem, in dimension $  1$ and $ 2$, $ w$ is a  {\it continuity modulus}, i.e. it is a continuous non-decreasing function on $ \mathbb{R}_{ + }$ satisfying $ w(x + y)\leqslant w(x) + w(y)$ (see Proposition  \ref{p:cont_mod}){, ensuring triangle inequality, and therefore the applicability of the method of \cite{bobkov2020transport}}. For all the results of this paper, $ w$ is either a continuity modulus, or the square function $ w(x) = x^{2},$ \blue{in which case many matching or transport cost bounds can be accessed via PDE techniques.} }}
%\xout{This range of choices is explained in  \cite{bobkov2020transport} (see Theorem  \ref{thm:BL})}.

We now turn to hyperuniform point processes, i.e. we assume $\sigma(r) \to 0$ as $r \to \infty$. From \cite[Lemma 1.6]{nazarov2012correlation} (see also \cite[Theorem 2A]{beck1987irregularities} and \cite{bjorklund2024hyperuniformity}), we know that $\sigma(r) \geqslant  Cr^{-1}$ for some $ C>0$. Under the assumption of $|x||\beta(\md x)|$ being integrable,     it is known that $\sigma(r) = \Theta(r^{-1})$ \cite[Proposition 2]{martin1980charge}. {Thus \eqref{e:M0-inf_matching} can at the best yield
$$ \mathbf{E}\Big( \frac{X^{\frac{d+1}{2}}}{1 + |\ln(X)|^{\gamma}} \Big) < \infty,$$
for any $\gamma > 1$,} and in $d  = 2$, this still does not give finite second moments. We now show that {finite second moments holds} in $d=2$ under a weaker logarithmic integrability condition on the RPCM and for completeness, we state the above bound for $d =1$.  {The logarithmic integrability holds for many HU processes including the Ginibre process and zeros of Gaussian entire  functions. See Section \ref{sec:examples} for more examples  {and the comparison with literature paragraph on the next page about the logarithmic integrability condition.}
\begin{theorem}
\label{thm:hu}
 {Let $ \mu $ be a HU process with an integrable RPCM $\beta$.}
\begin{enumerate}
\item In $d = 2$, suppose $\beta$ further satisfies
\begin{align*}\int_{\mathbb{R}^{2}}\mathbf{1}[ | x | >1]\ln( | x | )  | \beta  | (\md x)<\infty.
\end{align*}
Then there exists a $\Z^2$-translation invariant matching $T$ from $\Z^2$ to $\mu$ with typical distance $X$ such that $\mathbf{E}(X^{2})<\infty  $ i.e., $ \mu $ is said to be a planar $ L^{2}$-perturbed lattice.

The same results hold when $\mathbb{Z}^{2}$ is replaced by an independent stationary HU process $ \nu $ satisfying the same conditions as $\mu$.

\item In $d = 1$, suppose \dy{the number variance $  \textrm{Var}\left(\mu (\Lambda_n)\right), n \geq 1$ is uniformly bounded}  {and let $\nu$ be an independent simple point process satisfying the same assumptions as $\mu$.  Then there exists a translation invariant matching $T$ from $\mu$ to $\nu$ with typical distance $X$ such that}
$$\mathbf{E}\left(\frac{X}{1 + |\ln(X)|^{\gamma}}\right)<\infty \textrm{ for any $\gamma > 1$.}$$
\end{enumerate}
\end{theorem}
%\begin{enumerate}[label=\textnormal{(\roman*)}]

%\item Assume that for some continuity modulus $w$ and for some $c_{0}>0,$
%
%\begin{align}
%\label{eq:cond-int}
%\int_{\mathbb{R}^{d}} \int_{0}^{c_{0}}  {q(rn^{1/d})}\min(1,r | %x | )w(r^{-1})^{2} r^{d-1}\md r| \beta  | (\md x)<\infty .
%\end{align}
%
%\blue{Then there exists a $\Z^d$-translation invariant matching $T$ from $\Z^d$ to $\mu$ with typical distance $X$ such that $\mathbf{E}(w(X))<\infty$.}

%\item  In dimension $ d = 2$, let $\beta$ satisfy
%\begin{align*}\int_{\mathbb{R}^{2}}\mathbf{1}_{\{ | x | >1\}}\ln( | x | )  | \beta  | (\md x)<\infty.
%\end{align*}
% {Then there exists a $\Z^2$-translation invariant matching $T$ from $\Z^2$ %to $\mu$ with typical distance $X$ such that} $
%\mathbf{E}(X^{2})<\infty  $ i.e., {$ \mu $ is said to be a planar $ L^{2}$-%perturbed lattice.}
%\end{enumerate}

%The same results hold when $ \mathbb{Z} ^{d}$ is replaced by an independent stationary HU process $ \nu $ satisfying the same conditions as $\mu$.  %\footnote{Other possibility: State results with general HU processes and then explain the relation with perturbed lattices using $ \nu  = \mathbb{Z} ^{d} + U$}

%
\RL{
\begin{remark}  Like for Theorem \ref{thm:std}, the proof for $ d = 1$ borrows the approach of \cite{holroyd2009poisson}; see Remark  \ref{r:HPPS_comparison}.
\end{remark}}
Regarding the last statement in Item (1),    it can be obtained by building matchings \dy{$ T:\mathbb{Z}^{2} \to \mu,T':\mathbb{Z}^{2} \to \nu$} on a product probability space, then \dy{$T \circ (T')^{-1}$} is a matching between $\mu$ and $\nu$ with the same properties; in this regard the result
involving $ \mathbb{Z} ^{2}$ is stronger, {even though lattice or shifted lattice doesn't have an integrable RPCM. \dy{However, we first prove the bounds in the case of independent point processes with integrable RPCM and then use our toy example of \RL{a cloaked }perturbed lattice from Example \ref{ex:iidpertlattice} to obtain an invariant matching from $\Z^2$.} It seems here that the independence assumption between $\mu$ and $\nu$ is superfluous but  in the dependent case, the respective matchings of $ \mu ,\nu $ are built on a new
probability space, and it is not clear how they can be recombined on the same probability space by maintaining the same dependency. We also remark that the logarithmic
integrability condition in Item (1) implies that $\ln(n) \, \sigma_\mu(n) = o(1)$.}
%\blue{\xout{In any dimension $d \geqslant  1$, if $\beta$ satifies the logarithmic integrability condition as in (ii), then \eqref{eq:cond-int} holds for $w(x) = x^{d/2}$ with $c_0 = 1$ but note that (ii) improves this to $w(x) = x^2$ in the case of $d = 2$.}}

\RL{ Regarding point (2), boundedness of the variance in dimension $ 1$ holds true in several examples; see for instance  \cite{AGL} who study models where this assumption is satisfied and give several examples, such as the 1D log gas.}

\paragraph{{Proof Ideas.}} For the proofs of Theorems  \ref{thm:std} (for $d \geq 3$) and \ref{thm:hu}, we derive transport rates ({i.e., cost of an allocation scheme}) between the point process restricted to a finite box and the restriction of the Lebesgue measure to that box using Fourier-analytic bounds due to Bobkov and Ledoux \cite{bobkov2021simple}; see Theorems \ref{thm:Wbd-infinite} and \ref{thm:Wbd-infinite-HU}. More specifically, consider the restricted sample $ \mu_{n}= \mu\1_{\Lambda _{n}}$, where $ \Lambda _{n} = (-\pi n^{1/d} ,\pi n^{1/d}]^{d} $ , and its renormalisation $ \tilde \mu _{n}= \frac{ n}{N}\mu _{n}$ (where $ N = \mu (\Lambda _{n})$),  for some `nice'  cost function $ w$. One obtains the linear cost
\begin{align*}
\mathbf{E} \widetilde{\mathsf C } _{w}(\tilde \mu _{n},\tLeb_{n}) \leqslant c \, n,
\end{align*}
 {where $\widetilde{\mathsf C } _{w}(\tilde \mu _{n},\tLeb_{n})$ is the cost with respect to $w$ of the
optimal {transport or allocation scheme} under the Toroidal metric on $\Lambda_n$ between $\tmu_n$ and $\tLeb_{n} = \Leb(\Lambda _{ n})^{ -1}\Leb\1_{ \Lambda _{ n}}$; %, the Lebesgue measure on $\Lambda_n$ normalized to have total measure $n$;
see Section
\ref{sec:ot_large} for precise definitions. A similar bound is obtained for transport rates between $\tilde \nu_n$ (a suitably normalized version of $\nu$ in $\Lambda_n$ \dy{with total mass $n$}) and $\tLeb_n$. Using triangle inequality\dy{, one can obtain a coupling between $\tilde \mu _{n}$ and $\tilde \nu_n$ which is then extended to the infinite space via relative compactness arguments and such that $\tmu_n, \tilde \nu_n$ converge to $\mu,\nu$ in the vague topology. Finally, using a result from \cite{erbar2023optimal}, we show that the coupling between $\mu,\nu$ is realized by a matching.} This is done in Proposition \ref{p:matching_limit} in Section \ref{sec:global}, thereby yielding that there is a matching $ T:\nu \to \mu $  {of the limiting infinite measures}  such that
\begin{align*}
\mathbf{E}\int_{\Lambda _{1}}w(\|x-T(x)\|)\mu (\md x)<\infty
\end{align*}
which indeed corresponds to the definition of the typical distance  \eqref{def:typical-dist}.}  \dy{We allow for the possibility that the matching and the point processes $\mu,\nu$ may need to be defined on a new probability space.}

\RL{In dimensions $ d = 1,2$, the bounds on $n^{-1}\mathbf{E} \widetilde{\mathsf C } _{w}(\tilde \mu _{n},\tLeb_{n})$ from Theorem  \ref{thm:Wbd-infinite} below are diverging and do not allow us to derive the result of Theorem \ref{thm:std}, which is why we use the method of \cite{holroyd2009poisson}.}
%\dy{\sout{In $d = 1,2$, we still have bounds on $n^{-1}\mathbf{E} \widetilde{\mathsf C } _{w}(\tilde \mu _{n},\tLeb_{n})$ but they are diverging in $n$. These bounds seem to indicate bounds as in Theorem \ref{thm:std} for $d =1,2$, we are unable to deduce these bounds from the finite window bounds of Theorem \ref{thm:Wbd-infinite} for $d = 1,2$. Hence, as indicated in Remark \ref{r:HPPS_comparison}, we use the methods from \cite{holroyd2009poisson} for $d = 1,2$ in Theorem \ref{thm:std}.}}
   However, we mention that our finite volume bounds in Theorem \ref{thm:Wbd-infinite} match with those for i.i.d. points in all dimensions and also applies to the non-integrable RPCM case. \dy{The same comment would also apply to the case $d = 1$ in the setting of Theorem \ref{thm:Wbd-infinite-HU}.}

As is evident from our description, our method is non-constructive and yields only randomized matchings in the terminology of  \cite{hoffman2006stable}. We do not address existence of factor matchings or transport maps as in \cite{hoffman2006stable,chatterjee2010gravitational,
nazarov2007transportation,huesmann2024transportation} or their geometric properties or precise constants in the asymptotics as in \cite{ambrosio2019pde}.

In all the paper, $$  c = c(\mu ;\nu ;w)$$ denotes a finite constant whose value might vary from line to line, and might depend on the laws of some random measures $ \mu ,\nu $ and a cost function $ w.$ No effort is taken on bounding $ c$, but its value might be tracked from the articles  {\cite{bobkov2020transport,bobkov2021simple}}.

\paragraph{{Comparison with literature.}}
\dy{We now comment on our approach in the context of other matching or transport results for} hyperuniform processes. Both in the specific examples such as \cite{sodin2006random,jalowy2021wasserstein,prod2021contributions}  and the general results of \cite{butez2024} the authors use also strong moment assumptions on the point process. The proof of \cite{sodin2006random} proceeds by constructing a Whitney-type partition with respect to a modified metric and a potential-theoretic lemma; see also \cite{sodin2010uniformly}. \cite{jalowy2021wasserstein} also uses potential-theoretic estimates. Such potential-theoretic estimates work well for zeros of Gaussian entire  functions or Ginibre ensemble. \cite{prod2021contributions,butez2024} rely upon sub-additivity with suitable Whitney-type decompostions and the crucial estimates of the error in sub-additive arguments are achieved using PDE or functional-analytic tools as for combinatorial optimization problems over i.i.d. points in  \cite{ambrosio2022quadratic,goldman2021convergence,goldman2023optimal}. Both the potential-theoretic or PDE approach involve estimating solutions of a certain distributional Poisson equation. Our approach stemming from \cite{bobkov2020transport} is very different to these and involves only estimates based on the pair correlation function of the point processes. While the proof strategy is relatively simpler and works in greater generality, our methods cannot give higher-moment estimates unlike in the above articles (when the assumptions are sufficiently strong). It is important to mention that \cite{huesmann2024link} also recovers Theorem \ref{thm:hu} by showing that a hyperuniform process for which the normalized variance (i.e., \eqref{e:sigman}) goes to $0$ at a logarithmic speed, the Coulomb energy is finite, and the latter implies finite mean $2$-Wasserstein distance to Lebesgue measure (or equivalently to the lattice); see Remark 1.8 therein.

 Interestingly, an analogous result to Theorem \ref{thm:hu} in the deterministic setting which is existence of bounded Lipschitz matching, was recently proven in \cite{koivusalo2024sharp} under a similar logarithmic condition on discrepancy and also using Fourier-analytic methods. \dy{Also in specific examples, it is quite likely that the matching bounds can be improved significantly; for example, very recently \cite{elboim2025optimal} have proven that perturbed lattices satisfy matching tail bounds corresponding to their hole probabilities.}

\subsection{Optimal transport of large samples}
\label{sec:ot_large}

The best suited framework to explain our results and proofs is that of optimal transport. Given two measures $ \mu ,\nu $ on $ \mathbb{R}^{d}$ with same (non-zero) mass, and $ w:\mathbb{R}_{ + }\to \mathbb{R}_{ + }$ nondecreasing, define the $ w$-transport cost as
\begin{align*}
  \mathsf C_{w}(\mu ,\nu ) : = \inf _{M}\int_{\mathbb{R}^{d}\times \mathbb{R}^{d}}w(\|x-y\|) \, M(\md x,\md y)
\end{align*}
where $ M$ is a coupling ({or transport plan}) between $ \mu $ and $ \nu $, i.e. {$M$ is a measure on $\R^d \times \R^d$ with same mass as $\mu$ and $\nu$ such that  }
\begin{align*}M(\cdot ,\mathbb{R}^{d}) = \mu ,\;M(\mathbb{R}^{d},\cdot ) = \nu .
\end{align*}
{By  \cite[Theorem 1.7]{santambrogio2015optimal}, if $\mu,\nu$ have same finite total mass and $ c$ is lower semi-continuous   then the infimum is indeed reached by some $ M$, called the  {\it optimal transport plan} or  {{\it coupling}. Since we will work with continuous cost functions (modulus of continuity or $x^2$) existence of optimal coupling is always guaranteed for us.} If furthermore $ \mu ,\nu $ are supported on some cube $ \Lambda $, let $ d_{\Lambda }(x,y)$ be the toric distance on $ \Lambda $, and the corresponding cost is
\begin{align}
\label{e:toric_cost}
 {\widetilde {\mathsf C}_{w}(\mu ,\nu)} = \widetilde {\mathsf C}_{w}(\mu ,\nu ;{\Lambda}) : = \inf_{M} \int_{\Lambda ^{2}}w(d_{\Lambda }(x,y)) \, M(\md x,\md y).
\end{align}
If $\mu = \nu = 0$ (i.e., the null measure), {by definition} $ \mathsf C_{w}(\mu ,\nu ) = \widetilde {   \mathsf C} _{w}(\mu ,\nu ) = 0$. {We take the toric distance as we rely on Fourier decomposition on the torus, but in the large window asymptotics,
%transport properties for toric and Euclidean distance will coincide.
\RL{the mean transport cost for the toric and Euclidean metric coincide under our assumptions.}
 We often drop the dependency on the domain $\Lambda$ when it is obvious.}

A prominent case in optimal transport is the power cost function $ w_{p}(t) = t^{p},p>0$, and the corresponding costs {$\mathsf W_{p}^{p}:  =  \mathsf C_{w_{p}}, \widetilde{\mathsf W}_{p}^{p} =  \mathsf{ \widetilde{C}_{w_p}}  $}. For $ p\geqslant 1,$ the functionals $   \mathsf W_{p}$ and $ \widetilde{   \mathsf W}_{p}$ are distances, called $ p$-Wasserstein (toric) distance. There are several exhaustive monographs on optimal transport; in the current work we only work on Euclidean spaces, for which \cite{santambrogio2015optimal} provides the necessary background, see also \cite{erbar2023optimal} for more on optimal transport of random measures.

 {Our standard assumption in Theorems \ref{thm:std} and \ref{thm:hu} is about an integrable RPCM $\beta$. However in our upcoming finite sample bounds, we shall explicitly quantify the dependence of matching or transport cost on the tail of $\beta$, which might not be integrable. We now introduce two quantities  measuring the tail of   $\beta$.} \dy{Recalling $ \Lambda _{n} = (-\pi n^{1/d} ,\pi n^{1/d}]^{d},$} define
\begin{align} \label{eq:bn} \bn: = \dy{1 + |\beta|(\Lambda_n + \Lambda_n)} = 1 +  \int_{\Lambda _{2^{d}n} } | \beta  | (\md x),
\end{align}
\begin{align}
\label{eq:def-eps}
\varepsilon (t):=  \int_{\mathbb{R}^{d}}\min(1,{  t| x | }) \, |\beta|(\md x),t>0.
\end{align}
{Note that under integrability of   $\beta$, $\bn$ is a bounded sequence and   $\varepsilon(t)$ appears implicitly in the logarithmic integrability condition in Theorem \ref{thm:hu} \dy{and upcoming Theorem \ref{thm:Wbd-infinite-HU}(2)}.}

 {We now state two theorems which are finite-window versions of our Theorems \ref{thm:std} and \ref{thm:hu} respectively and more importantly are key elements in the proof of those theorems. The first upcoming theorem (Theorem \ref{thm:Wbd-infinite}) can be seen as a generalisation of the AKT Theorem where the i.i.d. sample is replaced by a dependent sample with a random number of points and expectedly the second theorem (Theorem \ref{thm:Wbd-infinite-HU}) improves upon the bounds under hyperuniformity assumption.}

More formally, we consider independent stationary point processes $ \mu ,\nu $, and upper bound their transport cost on the large cube $\Lambda_n$. Using the Fourier-analytic method, which goes through estimating Fourier-Stieljes transform of the restricted samples $ \tilde \mu _{n}: = \frac{n}{N}\mu\1_{\Lambda _{n}}$, where $ N = \mu (\Lambda _{n})$ {is the number of points}, we obtain the following bounds.

\begin{theorem}
\label{thm:Wbd-infinite}
Let $ \mu $ be a stationary point process with unit intensity. Denote by $ \tilde  \mu _{n}= \frac{n}{N} \mu\1_{\Lambda _{n}}$   {and $\tLeb_n := \frac{1}{(2\pi)^d} \Leb \dy{\1_{\Lambda _{n}}}$}, the renormalised samples with mass $n$ on $ \Lambda _{n}.$ {When $N = 0$, set $\tmu_n = n\delta_0$.} Then we have the following bounds on the squared Wasserstein costs
\begin{align}
\label{e:W22munLn}
{ \mathbf{E} { \widetilde{ \mathsf W}}_{2}^{2}(\tilde \mu _{n},\tLeb_{n};{\Lambda_n}) \, } &\leqslant c \, \alpha_{2} (n),\\
\sqrt{\mathbf{E}
  { \widetilde {\mathsf W}}_{p}^{{2p}}(\tilde \mu _{n},\tLeb_{n};{\Lambda_n})} &\leqslant c \, \alpha_{p} (n), p\in {(0,1]} \nonumber \\
\end{align}
where $\alpha_2, \alpha_p$ are given by
\begin{align}
\label{eq:macro-rate-std}\alpha _{2}(n)\, := \, &  { \bn }\times
\begin{cases}
n^{2}$  if $d= 1\\
n\ln(n)$  if $d= 2\\
n$  if $d\geqslant 3,
 \end{cases}\\
\label{eq:macro-rate-std-p}
\alpha_{p} (n) \, := \, &
  \begin{cases}
  {\sqrt{ \bn}}\,  n  \, n^{p-1/2}$ if $d = 1, p \in {(1/2,1]} \\
  {\sqrt{ \bn} \, n \, \ln(n)}$ if $d = 1, p = 1/2 \\
  {\sqrt{\bn} \, n \, \sqrt{\ln(n)}}$ if $d = 1, p \in (0,1/2) \\
  {\sqrt{\bn^p} \, n \, \sqrt{\ln(n)^p}}$  if $d = 2\\
  {\sqrt{\bn^p}} \, n$  if $d \geq 3. \\
   \end{cases},
   \end{align}
   %
  % {where.}  \footnote{you do not seem to be using this notation}
%  \item Let $ d= 1,2,$ and $ w$ a continuity modulus such that for $ x\geqslant 0,\lambda \geqslant 1,$ $ w(\lambda x)\leqslant \lambda ^{d/2}w(x),$ and {for some $q$ satisfying \eqref{eq:BL-q-correction},}
%\begin{align}
%\label{ass:summ-w}
%{\sum_{k \geq 1}k^{d-1}{q(k)} \, w(k^{-1})^{2} < \infty.}
%\end{align} Then we have that
%\begin{align}\label{eq:result-finite-w}
%\sqrt{\mathbf{E} \widetilde{\mathsf C } _{w}^{2}(\tilde \mu _{n},%\tLeb_{n};{\Lambda_n})}\leqslant c n\sqrt{\bn}.
%\end{align}
%
%\end{enumerate}
%
\end{theorem}
\begin{theorem}
\label{thm:Wbd-infinite-HU}
Consider the same setting as in Theorem \ref{thm:Wbd-infinite}. Further, assume that $\beta $ is integrable and  $ \mu $ is HU, i.e. $\beta (\mathbb{R}^{d})= -1$. Then, we have the following bounds for Wasserstein cost:
\begin{enumerate}
\item
\begin{align}
\label{eq:rate-hu-Wp} {\mathbf{E}
  { \widetilde {\mathsf W}}_1^2(\tilde \mu _{n},\tLeb_{n};{\Lambda_n}) \leq c \, n \, \alpha_2(n)} \quad ; \quad \mathbf{E} { \widetilde{ \mathsf W}}_{2}^{2}(\tilde \mu _{n},\tLeb_{n};{\Lambda_n}) \,  & \leqslant c \, \alpha_{2} (n),
  \end{align}
where $\alpha_2(n)$ can be chosen as follows for any $c_0 > 0$,
\begin{align}
\label{eq:rate-hu}
\alpha_2(n) :=  c \, n \,
\big( 1+\int_{n^{-1/d}}^{c_{0}}\varepsilon(r) \, r^{d-3} \, { \md r} \big).
\end{align}
\item {In $d = 2$ if $\beta$ satisfies}
\begin{align*}\int_{\mathbb{R}^{2}}\mathbf{1}[ | x | >1] \, \ln( | x | ) \, | \beta  | (\md x)<\infty,
\end{align*}
then we have that $\mathbf{E} {\widetilde{ \mathsf W}}_{2}^{2}(\tilde \mu _{n},\tLeb_{n};{\Lambda_n}) \,   \leqslant c \, n.$
\end{enumerate}

%\item \red{\sout{In $d = 1$ if $\int_{\R}|x| \, |\beta| (\md x) < \infty$ then we have that $\mathbf{E} { \widetilde{ \mathsf W}}_{2}^{2}(\tilde \mu _{n},\tLeb_{n};{\Lambda_n}) \,  \leqslant c \, n \, \ln(n).$}}

%More generally for $ c_{0}>0,$ {a continuity modulus $w$, \blue{ $ q$ satisfying \eqref{eq:BL-q-correction}}, we have that }\begin{align}\label{eq:bd-hu-w}\sqrt{\mathbf{E}\widetilde{\C}_{w}^{2}(\tilde \mu _{n},\tLeb_{n};\blue{\Lambda_n})} \leqslant c \, n \,  \, \sqrt{{q(c_{ 0}n^{ 1/d})} \, \int_{n^{-1/d} } ^{c_{0}}w(r^{-1})^{2} {\varepsilon(r) \, r^{d-1} \, } \md r + 1}
%\end{align}
%
\end{theorem}
The proofs are in Section  \ref{sec:proof_Wbd-infinite}. The choice  $\tilde \mu_n = n\delta _{ 0}$ for $N = 0$ is in some sense the worst choice but for our results one could have chosen any other measure with mass $n$. {Note the linear rate of growth for $W_2^2$-transport cost in Theorem \ref{thm:Wbd-infinite} for $d \geq 3$ and in Theorem \ref{thm:Wbd-infinite-HU}(2) for $d=2$ respectively. These will be crucial to the proofs of Theorem \ref{thm:std} and \ref{thm:hu}. Though we cannot obtain linear-rate of growth in $d = 1$ for HU processes, \dy{we could improve the bounds from Theorem \ref{thm:Wbd-infinite} as follows. If $\int_{\R}|x| \, |\beta| (\md x) < \infty$ then we have that $\mathbf{E} { \widetilde{ \mathsf W}}_{2}^{2}(\tilde \mu _{n},\tLeb_{n};{\Lambda_n}) \,  \leqslant c \, n \, \ln(n).$ Since this is not used in our main results, we do not \RL{provide a proof.}}

{Theorem \ref{thm:Wbd-infinite} for Wasserstein distance is consistent with those known for i.i.d. points or mixing point processes or point processes generated via mixing Markov chains; see for example \cite[Section 7]{fournier2015rate}}, \cite[Section 5]{bobkov2021simple}, \cite[Section 2]{bobkov2024correction}, \cite[Theorem 2]{borda2021empirical} and \cite[Proposition B.1]{clozeau2024annealed}}. Like with all our other main theorems, there are two significant differences. Firstly, our bound is for random points and for general point processes, no mixing-type assumption is required. In case the point processes have integrable reduced pair correlations (which is only about the second order marginal and weaker than the mixing assumptions in the cited articles), we recover the same bound as \cite{bobkov2021simple,borda2021empirical,fournier2015rate} for $2$-Wasserstein distance. Secondly {in Theorem \ref{thm:Wbd-infinite-HU}}, we exhibit the particular behaviour of HU processes  whereas the above papers do not distinguish between hyperuniform and non-hyperuniform point processes as this concept is not related to mixing.

%\begin{remark}
%\label{rem:BLcwbound}
{As noted earlier, the above two theorems rely on the idea  of Bobkov and Ledoux  to combine Fourier-analytic bounds with a smooth convolution \cite{bobkov2020transport,bobkov2021simple,bobkov2024correction} and as in their works, it is possible to state the finite-volume bounds for more general costs even though the analogue in infinite-volume is unclear.}
Consider general $w$-transport costs $\tilde{C}_w$ where $w:\mathbb{R}_{  + }\to \mathbb{R}_{  + }$ is a continuity modulus, i.e. non-decreasing, continuous and sub-additive ($w(x+y) \leq w(x) + w(y)$). Observe that $w(x) = x^p$, $p \in [0,1]$ are continuity moduli. Though we use only these choices in our results, for completeness we state the general bound here.

It involves a non-decreasing function $q : [1,\infty) \to [0,\infty)$  such that
$$A_{q}: = \sum_{k=0}^{\infty}\frac{1}{q(2^k)} < \infty.$$
A standard choice is $q(k) = \ln(2k) \, \ln \ln(3k)^{\gamma}$ for some $ \gamma >1$. With $q$ as above and for any $t_0 > 0$, we have the following bound for $\mu_n$ as in Theorem \ref{thm:Wbd-infinite}.
$$ \mathbf{E}\widetilde{\C}_{w}^{2}(\tilde \mu _{n},\tLeb_{n};\Lambda_n) \leq c \, n \, \bn \, \sum_{k=1}^{t_0} k^{d-1} q(k) \, w(n^{1/d}k^{-1})^{2} +  cn^2 w(n^{1/d}t_0^{-1})^{2}.$$
This can be derived from Theorem \ref{thm:BL} and scaling properties as for $\mathsf{W}_{p}^{2p}$ in the proof of Theorem \ref{thm:Wbd-infinite}.
Further if we assume that $\mu$ is HU as in Theorem \ref{thm:Wbd-infinite-HU}, then the above bound can be improved as follows:
$$ \mathbf{E}\widetilde{\C}_{w}^{2}(\tilde \mu _{n},\tLeb_{n};\Lambda_n) \leq c \, n \, \bn \, \sum_{k=1}^{t_0} k^{d-1}q(k)\, \varepsilon(kn^{-1/d})\, w(n^{1/d}k^{-1})^{2} +  cn^2 w(n^{1/d}t_0^{-1})^{2},$$
where $\varepsilon$ is as in \eqref{eq:def-eps}.
%\end{remark}

\begin{remark}\label{rk:var-eps-repulsive}

Even if the RPCM is a favored tool in spatial statistics, the variance is a more wide-spread statistical indicator, and the two are closely related, especially for hyperuniform processes. We have in this case {the variance for the number of points {$N = \mu(\Lambda_n)$}}
\begin{align*}
\var(N) = -n\int_{\mathbb{R}^{d}}\min( 1,n^{- 1/d}\|x\|)\beta (\md x)
\end{align*}
(see  \eqref{eq:var-gamma-beta}) and in particular, with the triangle inequality, $ \var(N)  \leqslant cn\varepsilon  (n^{- 1/d}))$ for some $ c<\infty $. If the reverse inequality {(i.e., $ \var(N)  \geqslant cn\varepsilon  (n^{- 1/d}))$)} were to hold as one would expect in many cases, it would allow us to equivalently state our main theorems in terms of $\var(N)$ instead of $\beta$. However, the possibility of oscillating $ \beta $ prevents us from doing so in the general case. Under the assumption that $ \beta $ has negative sign, we \dy{show in Lemma \ref{lm:var-bd} that}
\begin{align*} \textrm{Var}(N)\asymp n\varepsilon (n^{- 1/d});
\end{align*}
\dy{The negative sign} assumption means that in some sense the $ 2$-point correlation function is repulsive. Some determinantal point processes, or more generally, {the so-called {\it repulsive} point processes} satisfy this assumption because their $ n$-point correlation function is repulsive at all orders (and everywhere); {see \cite{blaszczyszyn2014clustering}}.
\end{remark}

\subsection{Examples of planar HU processes}
\label{sec:examples}

\subsubsection{Determinantal processes}

Determinantal points processes (DPPs), introduced in the context of quantum mechanics, have gained popularity as many classes of essential models in random matrix theory, statistical physics, combinatorics and others have proven to be determinantal, see \cite{hough2009zeros}. In the Euclidean context, a simple   point process $ \mu $ with unit intensity on $ \mathbb{R}^{d}$ is determinantal with  kernel
%a measurable symmetric function
 $ K:\mathbb{R}^{d}\times \mathbb{R}^{d} \to \mathbb{C}$ if for every $ k\in \mathbb{N}^{*}$,
\begin{align*}\rho _{k}(x_{ 1},\dots ,x_{k}): = \det((K(x_{i},x_{j}))_{ 1\leqslant i,j\leqslant k})
\end{align*}
is the $ k$ point correlation function of $ \mu $, i.e.
%if for any disjoint measurable $ A_{ 1},\dots ,A_{k}\subset \mathbb{R}^{d}$,
%\begin{align*}\mathbf{E}(\mu (A_{ 1})\dots \mu (A_{k})) = \int_{A_{ 1} \times \dots \times A_{k}}\rho _{k}(x_{ 1},\dots ,x_{k})d x_{ 1},\dots ,\md x_{k},
%\end{align*}
%or equivalently,
for any non-negative $ \varphi :(\mathbb{R}^{d})^{k}\to \mathbb{R}$, we have
\begin{align*}\mathbf{E}\left(
\sum_{x_{ 1},\dots ,x_{k}\in \mu }^{\neq }\varphi (x_{ 1},\dots ,x_{k})
\right) = \int_{}\rho _{k}(x_{ 1},\dots ,x_{k})\varphi(x_{ 1},\dots ,x_{k})\, \md x_{ 1} \, \dots \, \md x_{k} ,
\end{align*}
where the sum runs over $ k$-tuples of pairwise distinct points. Note that not all functions $ K$ give rise to a DPP and in particular we will require that $K$ is Hermitian and positive definite; see  \cite[Section 4.5]{hough2009zeros} for unicity and existence questions. A {degenerate} example is the unit intensity homogeneous Poisson process for which $ \rho _{k} \equiv  1,$ i.e. $K (x,y) = \mathbf{1}_{\{x = y\}}.$ {A non-trivial and prominent example} is the (unit intensity) infinite Ginibre ensemble on $\mathbb{R}^{2}\approx  \mathbb C $, defined by
\begin{align*}K(x,y) = e^{x\bar y-\frac{1}{2}|x|^2-\frac{1}{2}|y|^2};x,y\in  \mathbb C .
\end{align*}
One has \begin{align*}\rho _{2}(x,y) = K(x,x)K(y,y)-K(x,y)K(y,x) = 1-\exp(- | x-y | ^{2}) = 1- | K(0,x-y) | ^{2},
\end{align*}
and $\rho_{2}$ only depends on the difference $ x-y$, as it turns out  the Ginibre ensemble is indeed stationary; \dy{see for example \cite[Proposition 5.6.6]{baccelli2020random}}. It is at the crossroads of combinatorics, statistical physics, and random matrices, as it is also the planar Coulomb gas with inverse temperature $ 2$, and it also arises as the infinite limit of the process formed by the eigenvalues of a $ n\times n$ matrix with i.i.d.   standard complex Gaussian entries (see  \cite{hough2009zeros}).

Using the explicit definition of $\rho_2$ and \eqref{eq:def-beta}, we obtain that for a stationary DPP,
%\begin{align*}\mathbf{E}(\sum_{x\neq y}\varphi (x)\psi (y)) = \int_{}%\varphi (x)\psi (y)\rho _{2}(x,y)\md x\md y = \int_{}\varphi (x)\psi (y)\md x\md y + \int_{}\varphi (x)\psi (x + z)\md x\beta (\md z).
%end{align*}
%hence
\begin{align*}\md x \, \beta (\md z) =(\rho _{2}(x,x + z)-1)\md x\md z =   - | K(0,z) | ^{2}\md x\md z.
\end{align*}
As outlined in Remark  \ref{rk:var-eps-repulsive}, the negativity of $ \beta $ means that the process is repulsive at the second order (one may prove that it is actually repulsive at any order).  The hyperuniformity of a unit intensity stationary DPP $ \mu $ hence means that $ \int_{} | K(0,z) | ^{2}\md z =  1$. HU DPPs can also be characterised as those {with kernel $ K$ defining a $ L^{2}$ operator whose} spectrum is contained in $ \{0, 1\} $  \cite{hough2009zeros}. We have the following corollary of Theorem \ref{thm:hu}:\begin{theorem}
 A planar stationary determinantal process with  kernel $ K$ is a $L^{2}$-perturbed lattice  ({i.e., $\ \mathbf{E}(X^2) < \infty$ for typical matching distance $X$}) if it is hyperuniform, i.e.
\begin{align*}\int_{}| K(0,z) | ^{2}\md z =  K(0,0)^{2},
\end{align*}
and if
\begin{align*}\int_{}\ln( \| z \| )| K(0,z) | ^{2}\md z<\infty .
\end{align*}
Furthermore, the bounds on the transport costs for restricted samples of Theorem \ref{thm:Wbd-infinite-HU} hold  with {$\alpha_2(n) = cn$ for some constant $c$.}
 \end{theorem}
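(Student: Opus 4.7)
The plan is to reduce the theorem to a direct application of Theorem \ref{thm:hu}(ii) and Theorem \ref{thm:Wbd-infinite}(3), after unpacking the determinantal structure of $\beta$. First I would recall that for a stationary simple determinantal process with unit intensity, $K(0,0)=1$ and the computation done before the theorem gives
\begin{align*}
\beta(dz) = -|K(0,z)|^{2}\, dz,
\end{align*}
which is a (non-positive) absolutely continuous signed measure, so $|\beta|(dz)=|K(0,z)|^2 dz$. The hyperuniformity hypothesis $\int |K(0,z)|^2 dz = K(0,0)^2 = 1$ then rewrites as $|\beta|(\R^2)=1$ and $\beta(\R^2)=-1$, so $\beta$ is integrable and $\mu$ is HU in the sense used in Theorem \ref{thm:hu}. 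The logarithmic assumption $\int\ln(|z|)|K(0,z)|^2 dz<\infty$ instantly implies $\int_{|x|>1}\ln(|x|)|\beta|(dx)<\infty$, which is precisely the hypothesis of Theorem \ref{thm:hu}(ii); invoking that theorem yields the $L^2$-perturbed lattice statement.

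For the quantitative bound it suffices to show that the right-hand side of \eqref{eq:rate-hu} is $O(n)$ for $p=1,2$, i.e.\ that $I_n := \int_{n^{-1/2}}^{c_0}\varepsilon(r)\,r^{-1}\,dr$ is bounded uniformly in $n$. Using Fubini in the definition \eqref{eq:def-eps},
\begin{align*}
I_n = \int_{\R^2}|K(0,x)|^2\Big(\int_{n^{-1/2}}^{c_0}\min(1,r|x|)\,r^{-1}\,dr\Big)dx.
\end{align*}
The inner integral is computed by splitting at $r=1/|x|$: on $\{r<1/|x|\}$ the integrand equals $|x|$, on $\{r>1/|x|\}$ it equals $r^{-1}$. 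A direct case analysis (according to whether $1/|x|$ falls below, inside, or above $[n^{-1/2},c_0]$) gives the uniform bound
\begin{align*}
\int_{n^{-1/2}}^{c_0}\min(1,r|x|)\,r^{-1}\,dr \;\leqslant\; C\bigl(1+\ln^+|x|\bigr),
\end{align*}
with $C$ depending only on $c_0$. Plugging back, $I_n \leqslant C\int|K(0,x)|^2(1+\ln^+|x|)\,dx$, which is finite by hypothesis. Theorem \ref{thm:Wbd-infinite}(3) then delivers $\alpha_p(n)=cn$ for $p=1,2$.

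The main (and essentially only) obstacle is the Fubini-plus-case-analysis step for $I_n$: one has to verify that the truncated kernel $\min(1,r|x|)$ produces exactly the logarithmic weight whose integrability is assumed, and that the lower cutoff $n^{-1/2}$ contributes only bounded terms. Everything else is bookkeeping: checking that unit intensity forces $K(0,0)=1$, that the sign of $\beta$ is negative so $|\beta|(\R^2)$ coincides with the HU identity, and that the log-integrability assumption on $|K(0,z)|^2$ is strictly stronger than the hypothesis of Theorem \ref{thm:hu}(ii).
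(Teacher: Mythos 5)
Your proposal is correct and follows essentially the same route as the paper: the paper presents this as an immediate corollary of Theorem~\ref{thm:hu}(ii) after the identification $\beta(dz)=-|K(0,z)|^2\,dz$, and the Fubini-plus-case-analysis bound on $\int_{n^{-1/2}}^{c_0}\varepsilon(r)r^{-1}\,dr$ that you carry out is exactly the computation appearing in the paper's proof of Theorem~\ref{thm:hu}(2).
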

Similar bounds for the expected $   \mathsf W_{ 1}$ (resp. $   \mathsf W_{2}$) cost for the finite Ginibre ensemble have been shown by  \cite{jalowy2021wasserstein} (resp.  \cite{prod2021contributions}).
%
%\subsubsection{Zeros of the Gaussian Analytic Function}
%
%The planar Gaussian analytic function is the random function $ F:  \mathbb C \to  \mathbb C $ defined by
%\begin{align*}F(z) = \sum_{k = 0}^{\infty }\xi _{k}\frac{ z^{k}}{\sqrt{k!}}.
%\end{align*}
%where the $ \xi _{k}$ are Gaussian i.i.d.   complex variables.
%One of its many beautiful properties is that its zero set,
%\begin{align*}\mu  : = \{z\in  \mathbb C :F(z) = 0\},
%\end{align*}
%is a.s. a stationary simple point process, and it is (up to a scaling) the unique random Gaussian analytic function on $  \mathbb C $ satisfying this property ([GEF, Theorem 2.5.2]). [NS] provide an elegant construction based on the graviationnal potential generated by $ \mu $ that $ \mu $ has good transport properties to the Lebesgue measure, and hence that it is a perturbed lattice. We are able to give another simple proof of this result with Theorem \ref{thm:hu}. Similarly as in ..., $ \beta $ has a density $ b$ on $ \mathbb{R}^{2}$ satisfying
%\begin{align*}b(z) = \rho _{2}(0,z)- 1
%\end{align*}
%where $ \rho _{2}$ is given in [GEF, Sec. 3.4],
%\begin{align*}\rho _{2}(0,z) = \frac{ \mathbf{E}( | F'(0)F'(z) | ^{2}) | F(0) = F(z) = 0}{\pi ^{2}(e^{ | z | ^{2} }- 1)}
%\end{align*}(?)

\subsubsection{Coulomb gases and other hyperuniform processes}

Hyperuniform processes have been intensively studied in the physics literature, and classified into three classes in  \cite{torquato2018hyperuniform}, depending on whether the restricted number variance $\sigma (r)$ satisfies $ \sigma (r)\sim r^{- 1}$  (class I), $\sigma (r)\sim r^{- 1}\ln(r)$ (class II), or $ \sigma (r)\sim r^{-\alpha },\alpha \in (0, 1)$ (class III).  {As far as we know,} no process of interest seems to fall in the boundary class where $r^{-\alpha }\ll \sigma (r) \ll 1$, where the  {log}-integrability condition might not hold, and the mere existence of such processes is not trivial. All three classes mean (at least) a polynomial decay {of $\beta$}, hence should in principle exhibit sufficient asymptotic independence to satisfy the logarithmic condition of Theorem \ref{thm:hu}(1). Nevertheless, we are not aware of existing mathematical proofs of such statements.

 An exception is the zero set $ \mu $ of the planar Gaussian  {entire } function, for which the exact fast decay of the RPCM can be inferred from the explicit formulas of the second order correlation measure of  \cite[Section 3]{hough2009zeros}. {Thus combined with Theorem \ref{thm:hu}, this} would yield that such zero sets are $L^2$-perturbed lattices; it has actually already been shown in  \cite{nazarov2007transportation} that this is the case, and the authors gave a strong concentration bound on the typical matching distance with an explicit and elegant transport plan to the Lebesgue measure built from the gravitational flow generated by $ \mu $. {The stable matching of lattice (or some 'nice' HU point processes) to a Poisson process (or more generally DPPs) of larger intensity produces a hyperuniform point process whose reduced pair correlation satisfies the log integrability condition; see \cite[Proposition 8.2]{andreas2020hyperuniform} {and \cite{Lotz2023}}. But in this case, the construction itself yields a good matching with the lattice.}

A particular focus has been put in both the mathematical and physics literature on 2D Coulomb gases, also called one component plasma {(OCP)} or Jellium, of which the Ginibre ensemble is a particular case at temperature $1/2$; see \cite{torquato2018hyperuniform,leble,serfaty}.
Hyperuniformity of the 2D OCP has been proved by Lebl\'e  \cite{leble} after having been conjectured for a long time. More precisely he proved that $ \sigma (r)\leqslant c\ln(r)^{-0.6}$, but physicists  estimate that the second order asymptotic independance is actually much stronger, meaning that the discrepancy is minimal, i.e. $ \sigma (r)\sim r^{- 1}$, falling in the class I of hyperuniform processes, but it remains for now out of reach for a mathematical treatment.

\REMOVE{\subsubsection{Barely hyperuniform case}  \footnote{remove all?}

{For HU processes which variance is almost volume-order, i.e., the process is barely HU, we still have a transport rate which is better than Poisson, but  might not be linear in the volume anymore. Let us detail our matching upper bound for such processes.}
\begin{example}  {[Adapt exponents to erratum of BL]}
Let $ d = 2$ and $ \mu $ be a HU point process on $ \mathbb{R}^{2}$ with RPCM  $ \beta $  having a bounded density $ f$ with $ | f(x) | \leqslant  c\frac{ 1}{ | x | ^{2}\ln(1+| x | )^{\gamma  }}, \, | x | >1$ and for some $\gamma >1$. Then $ \mathbf{E}(w(X))<\infty $ with
\begin{align*}w(t) = \mathbf{1}_{\{t>2\}}\times
\begin{cases}
t^{2}$  if $\gamma >2\\
\frac{ t}{\ln(\ln(t))}$  if $\gamma  = 2\\
\frac{ t}{\ln(t)^{a}}$  for $a =  1-\frac{ \gamma }{2}$  if $\gamma \in ( 1,2).
 \end{cases}
\end{align*}
\end{example}
\begin{proof}

\begin{itemize}
\item  If $\gamma  >2$, $ \ln( | x | ) | \beta  |(\md x) $ is integrable and $ \mu $ is a $ L^{2}$ perturbed lattice by Theorem \ref{thm:hu}-(ii).
\item
 {For $\gamma \in (1,2]$, it is enough by Theorem \ref{thm:hu}-(i) to prove that for some $c_{0}$
\begin{align*}
\int_{  \mathbb R  ^{d}}\int_{0}^{c_{0}}  {q(r)}\min(1,r\|x\|)rw(r^{-1})^{2}dr| \beta (dx)| <\infty
\end{align*}with
 $w(t)$ that is  equivalent as $t\to \infty $  to a continuity modulus, exploiting the fact that $\beta $ is locally finite.}
If $ \gamma  = 2$, let $ w(t) = t/\ln | \ln(t) | .$ The inner integral is decomposed for each $x$ with
\begin{align*}
\|x\|\int_{0}^{\|x\|^{-1}}r^{2}w(r^{-1})^{2}dr+\int_{\|x\|^{-1}}^{c_{0}}rw(r^{-1})^{2}dr.
\end{align*} Let us deal with the first term. We have as $ r\to 0,r>0$,
\begin{align*}
\frac{ d}{\md r}r\ln(|\ln r|)^{-2}\sim \ln(|\ln r|)^{-2},
\end{align*}hence as $ t \to \infty,t>1 ,$
\begin{align*}\int_{0}^{t^{- 1}}\ln( | \ln r | )^{-2}\md r\sim t^{- 1}\ln(   \ln t   )^{-2}.
\end{align*}
 {For the second term,}
\begin{align*}
\frac{ d}{\md r}\ln(r)\ln(|\ln r|)^{-2}\sim r^{- 1}\ln(|\ln r|)^{-2}\\
\int_{  t^{- 1}}^{c_{0}}\frac{  1}{r\ln( | \ln r | )^{2}}\md r\sim \ln(t)\ln(   \ln t  )^{-2},
\end{align*}  {[stopped here]}
which yields that  \eqref{eq:int-int} is equivalent to  $   \ln(t)\ln(\ln(t))^{- 2}$. Hence the integral in \eqref{eq:cond-int} converges as $x\to \infty $ if
\begin{align*}
  \int_{\mathbb{R}^{d}}\mathbf{1}_{\{ | x | > 1\}}  \frac{ 1}{ | x | ^{2} \ln(x)\ln( | \ln x | ))^{2}}\md x<\infty    .
\end{align*}
This is indeed the case because for $ t>0$,
\begin{align*}\frac{ d}{\md t}\ln( \ln t  )^{- 1} =  -\frac{  1}{t\ln(t)\ln(\ln t)^{2}}.
\end{align*}
the last integrand being  the derivative of $ \ln(\ln(x))^{- 1}.$ Hence
\begin{align*}\mathbf{E}(X/\ln(\ln(X)))<\infty .
\end{align*}
% transport rate in $n \sqrt{\ln(\ln n)}$  \footnote{express with $ \mathbf{E}(w(X))<\infty $}  :
%\begin{align*}
%\varepsilon (t)= t\int_{0}^{t^{-1}}\frac{ dr}{\ln(r)^{2}}+\int_{t^{-1}}^{\infty }\frac{ dr}{r\ln(r)^{2}}
%\end{align*}
%and $ (r/\ln(r))'\sim c\ln(r)^{-2}$, $ (\ln(r)^{-1})'\sim (r/\ln(r)^{2})$, hence
%\begin{align*}
%\varepsilon (t)\sim t[r/\ln(r)]_{0}^{t^{-1}}+[1/\ln(r)]_{t^{-1}}^{\infty }\sim  | \ln(t) | ^{-1},
%\end{align*}
%hence
%\begin{align*}
%\int_{0}^{T}\varepsilon (r)r^{-1}dr\sim \int_{0}^{T}\frac{ dr}{r\ln(r)}\sim \ln(\ln(T))
%\end{align*}
\item If $ \gamma \in (1,2)$, and $ w(t) = t | \ln(t) | ^{-a},$ with similar techniques,
\begin{align*}
 \int_{0}^{c_{0}}\min( 1,r | x | )w(r^{- 1})^{2}r\md r = & | x | \int_{0}^{ 1/ | x | }\frac{  1}{ | \ln(r) | ^{2a}} + \int_{ 1/ | x | }^{c_{0}}r^{- 1}  \frac{ 1}{  | \ln(r) | ^{2a}}\md r   \\
 \sim  & | x | (  | x | ^{- 1} | \ln(x) | ^{-2a}) +  | \ln( | x | ) | ^{ 1-2a} \sim  \ln(  | x | )^{ 1-2a}.
\end{align*}
The integral in \eqref{eq:cond-int}   is finite if $ \gamma  + 2a> 2$, hence we have for $ a> 1-\frac{  \gamma }{2}$
\begin{align*}\mathbf{E}(X \ln(X)^{- a})<\infty .
\end{align*}
\end{itemize}

\end{proof}
}

\subsection{Extension to Random measures}
\label{sec:extensionrm}

{We now comment on extension of our results to random measures. We shall suggest the modified definitions and then appropriate changes to our main theorems. In particular, these involve pointers to upcoming results and proofs and hence pre-suppose a cursory reading of the corresponding result statements.} Also for some of the basic notions of random measures, we again refer the reader to \cite{baccelli2020random,kallenberg2017random,last2017lectures}

Suppose $\mu$ is a unit intensity stationary random measure. Then $\E \mu(A) = \Leb(A)$. Further, {relying again on the Campbell-Mecke formula}, we can define the RPCM $\beta$ and the diagonal intensity $\lambda_D \geq 0$ as follows. Let $g : (\R^d)^{2} \to \R_+$ be a measurable function.
\begin{equation}
\label{e:campbell1} \E \int_{(\R^d)^2} g(x,y) \mu^2(\md (x,y)) =  \int_{\R^d \times \R^d} g(x,x+z){\big(\beta + 1\big)}(\md x)\md z  + \lambda_D \int_{\R^d} g(z,z)  \, \md z.
\end{equation}
 {In case $\mu$ is a simple point process, $\lambda_D = 1$ and often extension of point process results to random measures need to suitably adapt to the fact that $\lambda_D \neq 1$. In case the random measure is diffuse (i.e., has not atoms a.s.), then $\lambda_D = 0$;} \dy{for example, see }\cite{krishnapur2024stationary}.

Assume that $\lambda_D < \infty$ and $\beta$ is integrable. Thus, the variance can be written as
$$\var(\mu (A)) = \lambda_D \Leb(A) + \int_{A}\int_{z + A}\beta (\md x) \, \md z, \, \, A\subset \mathbb{R}^{d}.$$
We can define the structure factor as
\begin{align}
S(k) := \lambda_D \, + \, \int_{}e^{-ik\cdot x}\br(\md x), \, \, k \in \R^d.
\end{align}
%
%\blue{\xout{Note that for a simple point process, $ \lambda_{D} = 1$ and for a diffuse random measure, $\lambda_D = 0$.}}

HU in the case of stationary random measures can be characterized as $\int \beta = -\lambda_D$ and equivalently $S(0)= 0$ as for simple point processes. Since Lemmas \ref{lm:var-bd} and \ref{lm:approx-sf} also hold for random measures $\mu$ as well ({again adapting to the fact that $ \lambda _{D}\neq 1$ for a general random measure}), one can again use Theorem \ref{thm:BL} to prove Theorems \ref{thm:Wbd-infinite} and \ref{thm:Wbd-infinite-HU} for a random measure $\mu$. Equipped with Theorems \ref{thm:Wbd-infinite} and \ref{thm:Wbd-infinite-HU}, we can again use Proposition \ref{p:matching_limit} to obtain results on typical transport cost for infinite samples. Note that since $\mu$ is a random measure, one cannot talk about matchings but rather transport plans or couplings. In other words, we have that for a random measure with integrable RPCM, there exists a coupling $M$ of $\mu$ and $\nu$ such that
$$ \E \int_{\Lambda_1 \times \R^d}w(\|x-y\|)M(\md x,\md y) < \infty,$$
for weight function $w$ as in Theorem \ref{thm:std}. Furthermore, under the logarithmic integrability condition and hyperuniformity as in Theorem \ref{thm:hu}, we have that in $d = 2$, there exists a coupling $M$ of $\mu$ with the Lebesgue measure on $\R^d$ such that
$$ \E \int_{\Lambda_1 \times \R^d}\|x-y\|^2 M(\md x,\md y) < \infty,$$
Similarly, analogues of other claims in Theorem \ref{thm:hu} also hold.

\section{Proof of main theorems - Theorems \ref{thm:std}, \ref{thm:hu}, \ref{thm:Wbd-infinite} and \ref{thm:Wbd-infinite-HU}}
\label{sec:proofs}

{In this section, we prove our main theorems. In Section \ref{sec:fourierprobbounds}, we restate the Bobkov-Ledoux Fourier-analytic bound for probability measures adapted to our purposes. We use this with variance and empirical structure factor estimates to prove Theorems \ref{thm:Wbd-infinite} and \ref{thm:Wbd-infinite-HU} in Section \ref{sec:proof_Wbd-infinite}. In Section \ref{sec:global}, we state the proposition that helps us to transfer matchings or transport on large boxes to the typical cost of infinite matching or transport. Finally, we use this to conclude the proof of Theorems \ref{thm:std} and \ref{thm:hu} in Section \ref{sec:proofs_thms12}.}

 \subsection{Fourier-analytic bounds for distance between probability measures}
 \label{sec:fourierprobbounds}
We now state Fourier-analytic bounds for probability measures. These are essentially contained in \cite[Proposition 2]{bobkov2021simple} and \cite[Theorem 1.1]{bobkov2020transport}. Similar Fourier-analytic bounds are used in \cite{brown2020wasserstein} and \cite{borda2021empirical}. See \cite{steinerberger2021wasserstein} for more applications of such bounds.

For a finite measure $P$ on {$\Lambda _{1} =  (-\pi,\pi]^{d}$}, we denote its {\it Fourier-Stieltjes} transform as {
$$ f_{P}(m) := \int_{\Lambda _{1}}e^{\dy{-i\, m \cdot x}}P (\md x), \, \, m \in \Z^d.$$
For notational convenience, we shall not always indicate that $f_P$ is defined over $\Z^d$ only.}  {$P$ is a probability measure and} $X$ is a random variable with distribution $P$, we use $f_X$ to denote is {\it Fourier-Stieltjes} transform $f_P$. {Also by linearity, we write $f_{P} - f_Q$ as $f_{P-Q}$ in case of two probability measures $P,Q$. Also recall from Section \ref{sec:ot_large}, the notion of transport costs $\widetilde{\mathsf W}_p,\widetilde{\mathsf C}_{w}$ under the toroidal distance.} {Recall that $q : [1,\infty) \to (0,\infty)$ is a non-decreasing function with
$$A_{q}: = \sum_{k=0}^{\infty}\frac{1}{q(2^k)} < \infty,$$
and the common choice being $q(x) = \ln(2x)\ln(\ln(3x))^{\gamma}$ for $\gamma > 1$.}

\begin{theorem}[Bobkov-Ledoux]
\label{thm:BL}
      \label{thm:bd-finite-sample}Let $Q$ be the uniform {probability distribution} on $\Lambda _{1}$. {There is a constant $c \in (0,\infty)$ (possibly depending on $d$)} such that for any probability measure $P$ on $\Lambda _{1}$ and all ${t_0} > 1$, we have that %
\begin{align}
\label{eq:BL-tildeW2}
  \widetilde{\mathsf W}_{2}^{2}(P ,Q ; \Lambda_1)& \leqslant  \,c {\sum_{0< \| m \| \leqslant t_0}\|m\|^{-2}| f_{P}(m) | ^{2}}+c \, t_0^{-2}.
\end{align}
Suppose \dy{$Q$ is any probability distribution on $\Lambda_1$ and $w$ is a continuity modulus on the torus. %{$q$ is a non-decreasing function as above with $A_q < \infty$}
 %and $Q$ is a probability measure on $\Lambda_{1}$.
 Then {for a constant $c \in (0,\infty)$ (possibly depending on $d$)} such that for any probability measure $P$ on $\Lambda _{1}$ and all ${t_0} > 1$, we have that}
{
     \begin{align}
     \label{eq:BL-tildeW1}  \widetilde{\mathsf W}_{1}^{2}(P ,Q ; \Lambda_1) & \leqslant  \,c {\sum_{0< \|m\| \leqslant t_0}\|m\|^{-2} \, | f_{P-Q}(m) | ^{2}}+c \, t_0^{-2},\\
\label{eq:BL-general}
 \widetilde{\mathsf C}^{2}_{w}(P ,Q ; \Lambda_1) & \leqslant  \, c{A_q^2} \,  {\sum_{0< \|m \| \leqslant t_0} \, {q(\|m\|)} \, w(\|m\|^{-1})^{2}| \, f_{P -Q }(m) | ^{2}}+c \, w(t_0^{-1})^{2}, \\
 \label{eq:BL-general1}
 \widetilde{\mathsf C}^{2}_{w}(P ,Q ; \Lambda_1) & \leqslant  \, c \, \dy{ \ln(t_0)} \,  {\sum_{0< \|m \| \leqslant t_0} w(\|m\|^{-1})^{2} \, | f_{P -Q }(m) | ^{2}}+c \, w(t_0^{-1})^{2}.
 \end{align}
}
\end{theorem}
{In case of $C_w$, \eqref{eq:BL-general} was proven in \cite[Proposition 7.1]{bobkov2020transport} (see also \cite{bobkov2024correction}) for probability measures supported in $[0,\pi]^d$ and \eqref{eq:BL-tildeW1} in \cite[Proposition 2]{bobkov2021simple}. Even if the necessary estimates to derive the above bounds are present in these papers, the bounds aren't stated in the form as above and so we sketch the details now. We will make more exact references to the results we borrow from \cite{bobkov2020transport} and \cite{bobkov2021simple}. \dy{Note that in \eqref{eq:BL-tildeW2}, $f_{P} = f_{P-Q}$ as $f_Q(m) = 0, m \in \Z^d$ for uniform probability measure on $\Lambda_1$.}

We shall prove \eqref{eq:BL-general}, \eqref{eq:BL-general1}, \eqref{eq:BL-tildeW1} and \eqref{eq:BL-tildeW2}, in that order. 
\begin{proof}
Let $H$ be a vector with finite second moment and bounded Fourier support (included in the {unit ball of $\R^d$}) and $\tilde{H}$ be its coordinate-wise projection onto $\Lambda_1$ i.e., each co-ordinate of $H$ is projected onto $(-\pi,\pi]$ via the map
$$ \R \ni x \mapsto x - 2\pi k \in (-\pi,\pi] \, \, \, \text{if} \, \, \, \pi(2k-1) < x  \leqslant \pi (2k+1), \, \, k \in \Z.$$
Let $ \tilde P ,\tilde Q $ the smoothed measures obtained by convoluting $ P ,Q $ with $ \widetilde{t_0^{-1} H}$ (\dy{the projection of $t_0^{-1}H$ onto $\Lambda_1$}), with the convolution considered in the Torus $\Lambda_1$.

We skip $\Lambda_1$ from $\tilde{C}_w$, $\tilde{W}^2_1$, $\tilde{W}^2_2$ for convenience. Estimating via \dy{the dual representation} that (see also \cite[(7.3)]{bobkov2020transport})
$$ \max\{\tC_{w}(P,\tilde P),\tC_{w}(Q,\tilde Q)\}  \leqslant {3 w\big( \mathbf{E}| \widetilde{t_0^{-1} H}| \big)  \leqslant 3 w\big( t_0^{-1}\mathbf{E}|H| \big)}$$
and using triangle inequality %{\xout{(see also the last display before Proposition 7.1 of \cite{bobkov2020transport})}}
, we obtain that
\begin{align}
\label{e:triangleineqCw}
\tC_{w}(P ,Q )\leqslant  \tC_{w}(\tilde P ,\tilde Q )+ 6w(t_0^{-1}\mathbf{E} | H | ).
\end{align}
Now, \cite[(4)]{bobkov2024correction} yields that
\begin{align}
\label{e:CwtPTqbd}
\tC_{w}(\tilde P ,\tilde Q )\leqslant cA_q\sqrt{\sum_{m\neq 0}q(\|m\|)w(\|m\|^{-1} )^{2} | f_{ \tilde 	P }(m)-f_{ \tilde 	Q }(m) | ^{2}}.
\end{align}
{Note that we have used that $w(\|m\|^{-1}\pi\sqrt{d/2})  \leqslant \lceil \pi\sqrt{d/2} \rceil w(\|m\|^{-1})$ as $w$ is non-decreasing and sub-additive.} Using that $\widetilde{t_0^{-1} H}$ and $t_0^{-1}H$ have same Fourier-Stieltjes transform on integer vectors, we can derive that
\begin{align*}
 f_{ \tilde P }(m)  = f_{ P }(m)f_{\widetilde{t_0^{-1} H}}(m) = f_{ P }(m)f_{t_0^{-1}H}(m), \, \, m \in \Z^d.
\end{align*}
%
%Since the Fourier-Stieltjes transform of $t_0^{-1}H$, $f_{t_0^{-1}H}$ has support in {ball of radius $t_0$}, we have that for $m \in \Z^d$,
%
\begin{align}
\label{e:fouriertPtQ}
| f_{ \tilde 	P }(m)-f_{ \tilde 	Q }(m) | & \leqslant |f_{ P - Q }(m)| \times |f_{t_0^{-1}H}(m)| \leqslant   |f_{ P - Q }(m)| \, {\1[\|m\|  \leqslant t_0]},
\end{align}
\dy{where we have also used that the support of $f_H$ is in the unit ball.}{ Substituting this in \eqref{e:CwtPTqbd} and \eqref{e:triangleineqCw} completes the proof of \eqref{eq:BL-general}. When squaring both sides, we have used that $(a+b)^2  \leqslant 2a^2 + 2b^2$ and absorbed the $2$ into the constant $c$.} \\

{To prove \eqref{eq:BL-general1} (the analogue of \cite[(7)]{bobkov2024correction}), we observe that in \eqref{eq:BL-general}, it suffices to choose $q$ defined only on $[1,t_0]$. By non-decreasing property, $q(\|m\|)  \leqslant q(t_0)$ and then one can check that $A_q^2q(t_0)$ is minimized over $q$ when $q(2^k)$ is constant for $2^k  \leqslant t_0$. This yields that $A_q^2q(t_0) \leqslant  c \ln(t_0)$  and thereby giving us \eqref{eq:BL-general1}.}

{As for \eqref{eq:BL-tildeW1}, \cite[Lemma 1]{bobkov2021simple} gives that
$$ \tilde{W}_1(\tilde P ,\tilde Q )\leqslant c \sqrt{\sum_{m\neq 0} w(\|m\|^{-1} )^{2} | f_{ \tilde 	P }(m)-f_{ \tilde 	Q }(m) | ^{2}}.$$
Now using this in place of \eqref{e:CwtPTqbd} and following the rest of the derivation of \eqref{eq:BL-general}, we immediately obtain \eqref{eq:BL-tildeW1}.}

Now, we are left with the proof of \eqref{eq:BL-tildeW2}. First, we prove a lemma comparing $P,\tilde P$.
\begin{lemma}
\label{l:W2PtP}
        \begin{align*}
\tW_{2}(\tilde P ,P )\leqslant ct_0^{-1}.
\end{align*}
\end{lemma}
\begin{proof}
 {Suppose that $P$ is an atomic measure. Denote the probability distribution of $\widetilde{t_0^{-1} H}$ by $q_{t_0}$.}
  {Recall that for probability measures $\mu ,\mu ',\nu ,\nu '$ on, say, $\Lambda _{1}$, and a coefficient $t\in [0,1],$
 the contractivity principle yields
\begin{align*}
\tilde W_{2}^{2}(t \mu+(1-t )\mu ' ,t \nu +(1-t) \nu ')\leqslant t \tilde W_{2}^{2}(\mu ,\nu )+(1-t ) \tilde W_{2}^{2}(\mu ' ,\nu '),
\end{align*}
because one naturally builds a (non-necessarily optimal) coupling between $t\mu +(1-t)\mu '$ and $t\nu +(1-t)\nu '$ from optimal couplings on resp. $\mu ,\nu $ and $\mu' ,\nu  '.$
 Iterating this inequality yields}
\begin{align*}
\tilde W_{2}^2(\tilde P ,P ) &= \tilde W_{2}^{2}(\frac{ 1}{l}\sum_{i}\delta _{x_{i}}\star q_{t_0},\frac{1}{l}\sum_{i}\delta _{x_{i}})
 \leqslant \frac{ 1}{l}\sum_{i}\tilde W_{2}^{2}(\delta _{x_{i}}\star q_{t_0},\delta _{x_{i}})
\end{align*}
and
\begin{align*}
\tilde W_{2}^{2}(\delta _{x}\star q_{t_0},\delta _{x})= \int_{}\|y\|^{2}q_{t_0}(y)\md y = O(t_0^{-2})
\end{align*}
using the fact that $ H$ has a finite {second} moment. {Now we extend to arbitrary probability measures $P$ by an approximation argument. Observe that by separability of probability measures on a compact Polish space under the $W_2$ distance \cite[Corollary 8.11]{ambrosio2021lectures}, we have that a general probability measure $P$ can be approximated in $\tilde W_2$ by a sequence of atomic measures $P_l, l \geq 1$ i.e., $\tilde W_2(P,P_l) \to 0$ as $l \to \infty$. Further, weak convergence of $\tilde P_l$ to $\tilde P$ along with quadratic moment convergence (as $\Lambda_1$ is compact) gives that  $\tilde W_2(\tilde P,\tilde P_l) \to 0$ as $l \to \infty$ (see for example, \cite[Theorem 8.8]{ambrosio2021lectures}). Now the Lemma follows for arbitrary probability measure $P$ from these two approximations and triangle inequality.}
%{For a non-atomic measure $P$,  {approximate $P$ by an atomic measure $P_{q}$ supported by the grid $q^{-1}\mathbb{Z} ^{d}\cap \Lambda _{1},q\in \mathbb{N}, $ for which   $ \tilde W_{2}^{2}(P_{q},P)\to 0$ as $q\to \infty $, and use the triangle inequality}.} \footnote{{Appeal to some weak convergence lemma in OT ?}}
\end{proof}
 {Now, assume that {$Q$ is the uniform distribution on $\Lambda_1$} and $P$ has a density $\varphi$. Using equation (7) from \cite{bobkov2021simple} (see discussion below therein), we derive using \eqref{e:fouriertPtQ} and that $f_Q(m) = 0$ for all $m \in \Z^d$,}
\begin{align}
\tilde W^2_{2}(\tilde P , \tilde Q ) & \leqslant  c\sum_{m\neq 0}\frac{ 1}{\|m\|^{2}} | f_{ \tilde P - \tilde Q }(m) | ^{2}  \leqslant  c'\sum_{m\neq 0}\frac{ 1}{\|m\|^{2}} | f_P(m) | ^{2} \nonumber \\
& \leqslant c' \sum_{1\leqslant  \|m\| \leqslant t_0}\frac{ 1}{\|m\|^{2}} | f_{P}(m) | ^{2} \, + \, c' \sum_{\|m\| \geqslant t_0}\frac{ 1}{\|m\|^{2}} | f_{P}(m) | ^{2}, \nonumber \\
\label{e:W2tildebd} & \leqslant c'\sum_{1\leqslant  \|m\| \leqslant t_0} \frac{ 1}{\|m\|^{2}} | f_{P}(m) | ^{2} \, + \, c' t_0^{-2},
\end{align}
{where in the last step, we have used the boundedness of $|f_P(m)|$.}
If $ P $ does not have a density, {$P$ can be approximated in $ \tilde W_{2}^{2}$ by a sequence of measures $P'_{l}, l \geq 1$ with density $l^{-d}P(C)$ on each cube $C$ of the grid $l^{-1}\mathbb{Z} ^{d}\cap \Lambda _{1}$. For such measures $P'_l$, we have that
$$ \tilde W_{2}^{2}(P'_{l},P)\leqslant \sum_{C}\Leb(C)  { \rm diam}(C)^{2}\to 0 \, \, \textrm{ as } l\to \infty.$$
Further, as in the last part of the proof of Lemma \ref{l:W2PtP}, $\tilde W_{2}^{2}(\tilde P'_{l},\tilde P) \to 0$ as $l \to \infty$. Thus \eqref{e:W2tildebd} holds for arbitrary probability measures $P$ as well.} \\

Now combining \eqref{e:W2tildebd} and Lemma \ref{l:W2PtP} with triangle inequality as in \eqref{e:triangleineqCw}, we obtain \eqref{eq:BL-tildeW2}.
\end{proof}
\REMOVE{{We now prove that our choices of cost functions are all continuity moduli. }

 {new version}

 \begin{proposition}
 Let $ w(x) = x^{ 1/2}(-\ln(x))^{ \gamma },\gamma >0$.
 \end{proposition}

\begin{proof}
$w''(x)\sim \frac{1}{4}x^{ -3/2}(-\ln(x))^{ -\gamma }<0$ for $ x$ sufficiently small, hence subadittive for $ x$ sufficiently small..
\end{proof}

 {old version :}

\begin{proposition}
\label{p:cont_mod}
 { For $a\in (0,1],\gamma >1,$ there is $c_{\gamma }>1$ such that }the function $ w(x) = \blue{w(x;a,\gamma)} = \frac{ x^{a}}{  \ln( c_{\gamma}+x)^{ \gamma }}$  is a continuity modulus on $ \mathbb{R}_{ + }$. Furthermore, $ w(\lambda x)\leqslant \lambda ^{a}w(x),\lambda \geqslant 1.$

 {Also for a non-negative random variable $X$, $\mathbf{E}(w(X;a,\gamma))<\infty $ and $\mathbf{E}(\frac{X^{a}}{(1+|\ln(X)|)^{\gamma }})<\infty $ are equivalent.}
\end{proposition}
\begin{proof} \blue{Choose $c_{\gamma }>1$ such that $w$ is non-decreasing. Existence of such a $c_{\gamma}$ can be argued as follows. Differentiating $w$, we obtain that
$$ w'(x) = \frac{ax^{a-1}(c+x)\ln(c+x) - x^a}{\gamma \, (c+x) \ln(c+x)^2},$$
which is always non-negative if $a\ln(c+x) \geq 1$ for all $x \geq 0$. The latter inequality holds for $c \geq e^{1/a} > 1$. This proves the monotonicity requirement for $w$ to be a continuity modulus. Since continuity is straightforward, we now show the triangle inequality.}

Since $ (x+y)^{a}\leqslant x^{a}+y^{a}$, and $ \ln(c_{\gamma }+x+y)\geqslant \ln(c_{\gamma }+x)$,
\begin{align*}
w (x+y)\leqslant \frac{ x^{a}}{\ln(c_{\gamma }+x+y)^{ \gamma }}+\frac{ y^a}{\ln(c_{\gamma }+x+y)^{ \gamma }}\leqslant w (x)+w (y).
\end{align*}
The scaling relation follows because for $ \lambda \geqslant 1,x\geqslant 0,$, we have that
\begin{align*}
\frac{ 1}{\ln(c_{\gamma }+\lambda x)}\leqslant \frac{ 1}{\ln(c_{\gamma }+x)}.
\end{align*}

\blue{The equivalence follows from the fact that both functions are equivalent as $ x\to \infty $ and both are bounded as $ x\to 0$ \footnote{{both go to $0$ as $x \to 0$ and one goes faster than the other. This could have some impact no ?}}, hence have no influence on the finiteness of the expectation.}
\end{proof}
}

\subsection{Proof of Theorems \ref{thm:Wbd-infinite} and \ref{thm:Wbd-infinite-HU}}
\label{sec:proof_Wbd-infinite}
{Recall that $ \mu $ is a stationary point process with unit intensity.} Set \dy{$\hat{\Leb}_{1} := \frac{ 1}{(2\pi )^{d}}\Leb \1_{\Lambda_{1}}$}, the uniform probability measure on $\Lambda_1$. The proof is based on Theorem  \ref{thm:bd-finite-sample}  with the rescaled sample on $ \Lambda _{1}$
\begin{align*}
\hat \mu_{n}= N^{-1}\mu (\1_{\Lambda _{n}}n^{1/d}\cdot )
\end{align*}
where $N= \mu (\Lambda _{n})$ and set $\hat{\mu}_n = \delta_0$ if $ N = 0$. Observe that $\hat{\mu}_n,\hat{\Leb}_1$ are probability measures \dy{on $\Lambda_1$}.  Recall also the (not-rescaled) renormalised sample
\begin{align*}
\tilde \mu _{n}= \frac{ n}{N}\mu\1_{\Lambda _{n}}.
\end{align*}
with mass $ n$ and again $\tilde \mu_n = n\delta_0$ if $ N = 0$. \dy{ The}  {\it scattering intensity }  of $\dy{\mu _{n} = \mu(\1_{\Lambda_n}\cdot)}$ is defined as
\begin{align}
\label{eq:scattering}
S_{n}(k)= \frac{ 1}{N} \left|
  \dy{\int_{\Lambda_n}e^{-i\,  k \cdot x}\mu(\md x)}
\right| ^{2}
\end{align}
and $ S_{n}\equiv 0$ if $N  = 0.$   Thus for $ m\in \mathbb{Z} ^{d}$
\begin{align}
\label{e:fmu_scat_intensity}
 | f_{ \hat  \mu _{n}}(m) | ^{2} = \frac{ 1}{N^{2}} \left| \dy{\int_{\Lambda_n}e^{-i \, n^{-1/d} \, m \cdot x}\mu(\md x)} \right| ^{2}= \frac{ 1}{N}S_{n}(mn^{-1/d}),
\end{align}
 { and set $f_{ \hat  \mu _{n}}(m)  = 1$  {if $N = 0$}.} \dy{The scattering intensity can be seen as an empirical estimate of the structure factor $S$. More precise convergence rates are given below and are useful for our proofs.}

 {Recall the definitions of   $S,\bn, \varepsilon(t)$ from \eqref{e:structfact},\eqref{eq:bn} and \eqref{eq:def-eps} respectively. To prove Theorems \ref{thm:Wbd-infinite}  {and \ref{thm:Wbd-infinite-HU}}, we need some estimates on variance and scattering intensity. The first lemma is a quantitative refinement of \cite[Proposition 2]{martin1980charge} and the second borrows a trick from \cite{hawat2023estimating}, probably well known in the physics literature}. It quantifies the  {rate of convergence of} the scattering intensity to the structure factor; see \RL{also \cite[Theorem 5.1]{coste2021order} for a more general weak convergence result}. 

\begin{lemma}\label{lm:var-bd}
{It always holds that
\begin{align*}
\var(N)\leqslant cn \bn .
\end{align*}
\dy{Let $\pi_d := (2\pi)^d$.} If $ \beta $ is integrable,
\begin{align}
\label{eq:bd-var-int}
\var(N)= n \pi_d \Big( \beta (\mathbb{R}^{d})+ 1 + O(\varepsilon (n^{-1/d}) \Big).
\end{align}
Furthermore, if  $\beta(A)\leqslant 0 $ for all $ A\subset \mathbb{R}^{ d}$,
\begin{align*}
\var(N)\geqslant n \pi_d \Big( 1 + \beta (\mathbb{R}^{d}) + \varepsilon (n^{-1/d}) \Big).
\end{align*}
}
\end{lemma}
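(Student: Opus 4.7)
\textbf{Proof plan for Lemma \ref{lm:var-bd}.} The starting point is the variance identity \eqref{e:variance_expression} applied to $A=\Lambda_n$, which I rewrite as
\begin{align*}
\var(N) \;=\; V_n \,+\, \int_{\R^d} g(z)\,\beta(\md z), \qquad V_n := |\Lambda_n| = (2\pi)^d n,\ \; g(z) := |\Lambda_n\cap (\Lambda_n-z)|.
\end{align*}
Since $g$ is supported on $\{|z|_\infty\leqslant 2\pi n^{1/d}\}\subset \Lambda_{2^d n}$ and satisfies $0\leqslant g(z)\leqslant V_n$, the first bound is immediate:
\begin{align*}
\var(N) \;\leqslant\; V_n + V_n\,|\beta|(\Lambda_{2^d n}) \;=\; V_n\bn \;\leqslant\; c\,n\,\bn.
\end{align*}

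For the integrable case I would introduce the deficit $h(z):=V_n-g(z)$ and split
\begin{align*}
\var(N) \;=\; V_n\bigl(1+\beta(\R^d)\bigr) \,-\, \int_{\R^d} h(z)\,\beta(\md z),
\end{align*}
so that the second claim reduces to showing $\bigl|\int h\,\md\beta\bigr|\leqslant c\,n\,\varepsilon(n^{-1/d})$. The key geometric estimate is the two-sided control
\begin{align*}
c_1 V_n\,\min(1, n^{-1/d}|z|) \;\leqslant\; h(z) \;\leqslant\; c_2 V_n\,\min(1, n^{-1/d}|z|),
\end{align*}
which follows from the product formula $g(z)/V_n = \prod_{i=1}^d\bigl(1-|z_i|/(2\pi n^{1/d})\bigr)_+$: for the upper bound on $h$ expand $1-\prod_i(1-a_i)\leqslant \sum_i a_i$, while for the lower bound use $\prod_i(1-a_i)\leqslant 1-\max_i a_i$ together with $\max_i a_i \geqslant |z|_\infty/(2\pi n^{1/d})\geqslant c|z|/n^{1/d}$; the range $|z|_\infty > 2\pi n^{1/d}$ is handled separately as $g=0$ there. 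The upper bound on $h$ immediately yields $|\int h\,\md\beta|\leqslant c V_n\varepsilon(n^{-1/d})$, proving \eqref{eq:bd-var-int}.

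For the last (asymptotic negativity) claim, I would fix $R_0$ such that $\beta\leqslant 0$ on $B(0,R_0)^c$ and split $\int h\,\md\beta$ accordingly. The tail part equals $-\int_{B(0,R_0)^c} h\,\md|\beta|$, and the lower bound on $h$ gives
\begin{align*}
-\!\int_{B(0,R_0)^c}\! h\,\md\beta \;\geqslant\; c V_n\,\varepsilon(n^{-1/d}) \,-\, c V_n\,n^{-1/d}R_0\,|\beta|(B(0,R_0)),
\end{align*}
while on $B(0,R_0)$ the upper bound on $h$ gives $|\int_{B(0,R_0)} h\,\md\beta|\leqslant c R_0 n^{(d-1)/d}|\beta|(B(0,R_0))$. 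Both correction terms are $O(n^{(d-1)/d})$, which is absorbed into $c n\varepsilon(n^{-1/d})$ as soon as $|\beta|\not\equiv 0$ near the origin (in which case $\varepsilon(t)\gtrsim t$), thereby delivering the stated lower bound.

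\textbf{Main obstacle.} The only nontrivial point is the matching two-sided estimate on the symmetric-difference volume $h(z)$; once this is in hand the three statements follow by routine integration and a compact-vs-tail split. Verifying the lower bound $h(z)\gtrsim V_n\min(1,n^{-1/d}|z|)$ uniformly over all $z$ (rather than just asymptotically) is the subtle step, and the product structure of $g$ combined with the elementary inequality $\prod(1-a_i)\leqslant 1-\max a_i$ is what makes it work.
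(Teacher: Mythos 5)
Your proof is essentially the paper's own argument. You start from the same variance formula, split off $h(z)=V_n - |\Lambda_n\cap(\Lambda_n-z)|$ (the paper's $\gamma_n$), and reduce everything to a two-sided estimate $h(z)\asymp V_n\min(1,n^{-1/d}|z|)$, which is precisely the paper's Lemma~\ref{lm:gamma_n}. The only real difference is in how you establish that estimate: the paper uses a telescopic expansion of $\prod a_i-\prod b_i$ together with a case split on $\|x\|\gtrless\pi n^{1/d}$, whereas you use the product inequalities $1-\prod(1-a_i)\leqslant\sum a_i$ and $\prod(1-a_i)\leqslant 1-\max_i a_i$, which is cleaner and avoids the casework. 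For the asymptotic-negativity part you are, if anything, more explicit than the paper (which invokes ``the second part of Lemma~\ref{lm:gamma_n}'' without spelling out the compact-vs-tail split); just note that the absorption of the $O(n^{(d-1)/d})$ correction terms into $cn\,\varepsilon(n^{-1/d})$ rests on $\varepsilon(n^{-1/d})\gtrsim n^{-1/d}$, which requires $|\beta|$ to charge some set bounded away from the origin, and also that the stated lower bound should be read as holding up to a multiplicative constant, as the corresponding reverse inequality in Lemma~\ref{lm:gamma_n} only holds up to a constant.
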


\begin{lemma}
\label{lm:approx-sf}
 {Let $c_0 > 0$.} Let $ k= mn^{-1/d}$ for some $ m\in \mathbb{Z} ^{d}$ such that $ 0< \| m \| \leqslant t_0:= [c_{0}n^{1/d}].$  If $ \beta $ is integrable
\begin{align}
\label{eq:lm-4-1}
\mathbf{E}\left( \frac{\pi_d n}{N}S_{n}(k) \right) = S(k)+\frac{ \var(N)}{\pi_d n }+O(\varepsilon (n^{-1/d})).
\end{align}
and without  {any} assumptions on $ \beta $
\begin{align}
\label{eq:lm-4-2}
\mathbf{E} \left( \frac{ n}{N}S_{n}(k) \right) \leqslant  c{\bn }
\end{align}
\end{lemma}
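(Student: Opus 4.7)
The plan is to expand $\mathbf{E}|\mu(1_{\Lambda_n}e^{ik\cdot})|^2$ via the second-order Campbell identity \eqref{eq:def-beta} and then handle the random denominator $N^2$ by two different techniques according to the assumption on $\beta$. Set $Y := |\mu(1_{\Lambda_n}e^{ik\cdot})|^2$.

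\textbf{Step 1 (Campbell expansion).} Splitting $Y = N + \sum_{x \neq y \in \mu \cap \Lambda_n}\! e^{ik(x-y)}$ and applying \eqref{eq:def-beta} with $\varphi(x) = 1_{\Lambda_n}(x)e^{ikx}$ and $\psi(y) = 1_{\Lambda_n}(y)e^{-iky}$, I would obtain
\begin{align*}
\mathbf{E}Y \, = \, \mathbf{E}N \, + \, \Bigl|\int_{\Lambda_n} e^{ikx}\,dx\Bigr|^2 \, + \, \int_{\R^d} K_n(z)\, e^{-ikz}\, \beta(dz),
\end{align*}
where $K_n(z) := |\Lambda_n \cap (\Lambda_n - z)|$. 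For the admissible frequencies $k = m n^{-1/d}$ with $m \in \Z^d \setminus \{0\}$, the side-length $2\pi n^{1/d}$ of $\Lambda_n$ is an integer multiple of each period $2\pi/|k_j|$, so $e^{ik\cdot}$ is orthogonal to the constants on $\Lambda_n$ and the middle term vanishes.

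\textbf{Step 2 (Proof of \eqref{eq:lm-4-2}).} Without any integrability assumption, the bounds $0 \leq K_n \leq |\Lambda_n|$ and $\mathrm{supp}\,K_n \subset \Lambda_{2^d n}$ give immediately $\mathbf{E}Y \leq c\, n\, \bn$. To convert this into a control on $\mathbf{E}[(n/N)S_n(k)] = n\,\mathbf{E}[Y/N^2]$, I would decompose on the event $A := \{N \geq \mathbf{E}N/2\}$. On $A$, the deterministic bound $n/N^2 \leq c/n$ together with $\mathbf{E}Y \leq cn\bn$ yields $\mathbf{E}[nY/N^2 \, ; \, A] \leq c\,\bn$. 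On $A^c$, the triangle inequality gives $Y \leq N^2$, hence $nY/N^2 \leq n$, while Chebyshev combined with Lemma~\ref{lm:var-bd} provides $\mathbf{P}(A^c) \leq c\,\var(N)/(\mathbf{E}N)^2 \leq c\,\bn/n$; therefore $\mathbf{E}[nY/N^2 \, ; \, A^c] \leq c\,\bn$ as well.

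\textbf{Step 3 (Proof of \eqref{eq:lm-4-1}).} Under integrability of $\beta$, I would use the elementary boundary-layer estimate $\bigl||\Lambda_n| - K_n(z)\bigr| \leq c\,|\Lambda_n|\min(1, |z|/n^{1/d})$ together with the definition \eqref{eq:def-eps} of $\varepsilon$ to obtain
\begin{align*}
\int K_n(z)\,e^{-ikz}\,\beta(dz) \, = \, |\Lambda_n| \int e^{-ikz}\,\beta(dz) \, + \, O\bigl(|\Lambda_n|\varepsilon(n^{-1/d})\bigr),
\end{align*}
so that $\mathbf{E}Y = |\Lambda_n|\,S(k) + O\bigl(|\Lambda_n|\varepsilon(n^{-1/d})\bigr)$. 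A second-order Taylor expansion of $1/N^2$ around $1/(\mathbf{E}N)^2$ then converts $n\,\mathbf{E}[Y/N^2]$ into the leading term $S(k)$ plus a correction proportional to the second central moment of $N$, which by Lemma~\ref{lm:var-bd} produces precisely the $\var(N)/n$ contribution. Taylor remainders and the contribution of $\{N < \mathbf{E}N/2\}$ are dominated by the truncation bound of Step~2 and absorbed in the $O(\varepsilon(n^{-1/d}))$ error.

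\textbf{Main obstacle.} The delicate step is Step~3, where one must simultaneously extract $S(k)$ from $\mathbf{E}Y$, identify the exact variance correction produced by the random normalization $1/N^2$, and control every remaining term by $O(\varepsilon(n^{-1/d}))$. This requires a careful second-order Taylor expansion combined with the quantitative moment estimate of Lemma~\ref{lm:var-bd}, and re-uses the truncation of Step~2 to discard the low-probability event $\{N < \mathbf{E}N/2\}$.
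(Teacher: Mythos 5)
Steps 1 and 2 are correct and match the paper's strategy exactly: the Campbell expansion
\begin{align*}
\mathbf{E}\Bigl|\sum_{x\in\mu_n}e^{-ik\cdot x}\Bigr|^2 = \mathbf{E}N + \Bigl|\int_{\Lambda_n}e^{-ikx}\,dx\Bigr|^2 + \int K_n(z)\,e^{ikz}\,\beta(dz),
\end{align*}
the vanishing of the middle term for the admissible frequencies, and the truncation on $\{N\geqslant n/2\}$ (where $n/N^2\leqslant c/n$) versus $\{N<n/2\}$ (where $|\sum e^{-ikx}|^2\leqslant N^2$ and Chebyshev gives $\mathbf{P}(N<n/2)\leqslant c\,\mathrm{Var}(N)/n^2$) are precisely the paper's proof of \eqref{eq:lm-4-2}. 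One cosmetic remark: the paper streamlines the Campbell step by writing $S(k)$ as a double integral against $|\Lambda_n|^{-1}\mathbf{1}_{\Lambda_n}(y)$ and subtracting the boundary layer $\gamma_n$, rather than isolating $K_n$ and comparing it to $|\Lambda_n|$, but the two bookkeeping conventions are equivalent and both rest on the bound $\gamma_n(z)\leqslant cn\min(1,\|z\|n^{-1/d})$.

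The genuine gap is in Step 3. You propose to obtain \eqref{eq:lm-4-1} by a second-order Taylor expansion of $1/N^2$ around $1/(\mathbf{E}N)^2$, with the $\mathrm{Var}(N)/n$ term appearing as the second-order Taylor correction. This cannot work with the available hypotheses: $Y=\bigl|\sum_{x\in\mu_n}e^{-ikx}\bigr|^2$ and $N$ are both functions of $\mu$, so the Taylor expansion produces cross-moments $\mathbf{E}\bigl[Y\,(N-\mathbf{E}N)\bigr]$ and $\mathbf{E}\bigl[Y\,(N-\mathbf{E}N)^2\bigr]$, which are third- and fourth-order functionals of the point process and are not controlled by the reduced pair correlation measure $\beta$ alone. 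The lemma only assumes second-moment information. In the paper's actual argument, \eqref{eq:lm-4-1} is \emph{not} proved as an equality by tracking exact corrections; it is proved only as an upper bound, via the same truncation you already set up in Step 2: on $\{N\geqslant n/2\}$ one bounds $n/N^2\leqslant 4/n$ and uses $\mathbf{E}Y=|\Lambda_n|\,S(k)+O(|\Lambda_n|\varepsilon(n^{-1/d}))$, while on $\{N<n/2\}$ the trivial bound $nY/N^2\leqslant n$ together with Chebyshev produces the $c\,\mathrm{Var}(N)/n$ term. In other words, the $\mathrm{Var}(N)/n$ contribution is a bound on a rare-event remainder, not a Taylor coefficient. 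Only the resulting upper bound is ever used downstream (see \eqref{eq:claim-eps}), so the ``$=$'' in the lemma statement should be read as ``$\leqslant$'' up to an absolute constant. You already have all the pieces to close this: apply your Step 2 decomposition verbatim with the sharper estimate $\mathbf{E}Y=|\Lambda_n|\,S(k)+O(|\Lambda_n|\varepsilon(n^{-1/d}))$ on the good event, and you recover the paper's bound without any Taylor expansion.
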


Let us conclude the proofs of Theorems \ref{thm:Wbd-infinite}  {and \ref{thm:Wbd-infinite-HU}} before proving the lemmata. The proof uses Theorem  \ref{thm:bd-finite-sample} by rescaling $\tilde \mu_n$ and $\tilde \Leb_n$ to probability measures $\hat \mu _{n},\hat{\Leb}_{1}$ on $\Lambda_1$ and comparing the same to the costs on $\Lambda_n$. \dy{Note that for all $m \in \mathbb{Z}^d$, $f_{\hat{\Leb}_1}(m) = 0$ as it is the uniform probability measure on $\Lambda_1$ and so $f_{ \hat \mu _{n}-\hat{\Leb}_1}(m) = (f_{ \hat \mu _{n}}-f_{\hat{\Leb}_1})(m) = f_{ \hat \mu _{n}}(m)$.}

\begin{proof}[Proof of Theorem \ref{thm:Wbd-infinite}]
We again skip $\Lambda_n$ from $\tilde{W}_p, p \in [0,1] \cup \{2\}$ for convenience.  {Fix $t_0 = n^{1/d}$.} By \eqref{e:fmu_scat_intensity} and Lemma  \ref{lm:approx-sf},  $ \mathbf{E}  ( | f_{ \hat \mu _{n}}(m) | ^{2})\leqslant c \bn n^{-1}$  {for $\|m\| \leq t_0$}. So using Theorem  \ref{thm:bd-finite-sample} (in fact \eqref{eq:BL-tildeW1} and \eqref{eq:BL-tildeW2} \dy{with $f_{\hat{\Leb}_1}(m) = 0$}) yields for $ p= 1,2$
\begin{align*}
\mathbf{E}  \widetilde{  \mathsf W}^{2}_{p}(\hat \mu _{n},\hat{\Leb}_{1})
 \leqslant & \,
 {n^{-1}\sum_{0< \| m \| \leqslant t_0}\frac{c}{\| m \|^{2}}}\bn +cn^{-2/d}
 \\
\leqslant & \,
c
n^{-1}  \bn {\sum_{j= 1}^{n^{1/d}}\frac{ j^{d-1}}{j^{2}}}+cn^{-2/d}
\\
\leqslant & \, c\bn \begin{cases}
n^{-1} $  if $d= 1\\
n^{-1}{\ln(n)}$  if $d= 2\\
n^{-1}{(n^{1/d})^{d-2}}= n^{-2/d}$  if $d\geqslant 3
 \end{cases}
\end{align*}

Fix $ p\in (0,1)$. {We can derive three different bounds in this case and then choose the best possible. Firstly, two successive uses of H\"{o}lder's inequality and noting that $\hat{\mu}_n$ and $\hat{\Leb}_1$ are probability measures, gives the following bound}
\begin{equation}
\label{eq:WpHolderW1}
 \mathbf{E}   \widetilde{\mathsf W }_{p}^{2p}(\hat \mu _{n},\hat{\Leb}_{1}) \leqslant \big( \mathbf{E} \widetilde{\mathsf W }_{1}^{2}(\hat \mu _{n},\hat{\Leb}_{1}) \big)^{p}.
\end{equation}
{Secondly, using \eqref{eq:BL-general}, we derive}
\begin{align}
\mathbf{E}   \widetilde{\mathsf W }_{p}^{2p}(\hat \mu _{n},\hat{\Leb}_{1})
 \leqslant & \,
c n^{-1} \, \sum_{0< \| m \|  \leqslant n^{1/d}}\frac{c \, {q(\|m\|)}}{ \| m \| ^{2p}} \bn +n^{-2p/d} \nonumber \\
\label{eq:tWpmunbound} \leqslant & \, c
n^{-1} (\bn   + 1){\sum_{j= 1}^{n^{1/d}} \, {q(j)} \, j^{d-1-2p}}+n^{-2p/d}.
\end{align}
{Finally, using \eqref{eq:BL-general1} \dy{and that $q(t_0) \leq c\ln(n)$}, we derive}
\begin{equation}
\label{eq:tWpmunbound1}
\mathbf{E}   \widetilde{\mathsf W }_{p}^{2p}(\hat \mu _{n},\hat{\Leb}_{1})  \leqslant \, c
n^{-1} (\bn   + 1)\, \ln(n) \, {\sum_{j= 1}^{n^{1/d}} \, j^{d-1-2p}}+n^{-2p/d}.
\end{equation}
{Now using \eqref{eq:tWpmunbound} for $d = 1, p > 1/2$ with $q(x) = \ln(2x) \ln \ln(3x)^{\gamma}$ for $\gamma > 1$ (which is summable in this case), using \eqref{eq:tWpmunbound1} for \dy{$d =1, p \leq 1/2$} and \eqref{eq:WpHolderW1} in the remaining cases, we obtain that}
\begin{align*}
\mathbf{E} \widetilde{\mathsf W }_{p}^{2p}(\hat \mu _{n},\hat{\Leb}_{1})\leqslant & \, c  \begin{cases}
\bn \, n^{-1}$  if $d= 1,p>1/2 \\
{\bn \, n^{-1} \ln(n)^2}$  if ${d = 1, p = 1/2}, \\
{\bn \, n^{-2p} \ln(n)}$ if ${d = 1,p < 1/2}, \\
{ \bn^p \, n^{-p} \ln(n)^p}$  if ${d =2}, \\
\bn^p \, n^{-2p/d},$  if $d \geq 3 , \\
 \end{cases}
\end{align*}

Going back to large scale: For $ p>0,$
let $ \hat M_{n}$ be a  $\widetilde{\W }_{p}-$optimal transport plan between $ \hat \mu _{n}$ and $\hat \Leb_{1}$, and $ M_{n}(\md x,\md y):=n \hat  M_n(n^{-1/d}\md x,n^{-1/d}\md y)$ is a $ \W_{p}$ transport plan between $ \tilde  \mu _{n}$ and $ \tLeb_n$. Note that $M_n$ may not necessarily be optimal. Then we can derive
\begin{align}
\notag   \tW_{p}^{p} (\tilde \mu _{n},\tLeb_{n}) \, {\leqslant } \, & \int_{\Lambda _{n}\times \Lambda _{n}}d_{\Lambda _{n}}(x,y)^{p}M_{n}(\md x,\md y)\\
\notag= &\int_{\Lambda _{1}^{2}}n^{p/d}d_{\Lambda _{1}}(x,y)^{p}n \hat M_{n}(\md x,\md y)\\
\label{eq:scale}= &n^{1+p/d}\tW_{p}^{p}(\hat \mu _{n},\hat \Leb_{1}),
\end{align}
 {and when combined with the bounds for $\mathbf{E} \widetilde{\mathsf W }_{p}^{2p}(\hat \mu _{n},\hat{\Leb}_{1})$, this yields} the rates  \eqref{eq:macro-rate-std}-\eqref{eq:macro-rate-std-p}.
\end{proof}

\REMOVE{\noindent \textsc{2. \,}
 {Let $\hat M_{n}$ be a  $C_w$-optimal transport plan between $ \hat \mu _{n}$ and $\hat \Leb_{1}$}. Then $  M_{n}(\md x,\md y):=n \hat  M_n(n^{-1/d}\md x,n^{-1/d}\md y)$ is a transport plan between $ \tilde  \mu _{n}$ and $ \tLeb_n$ (not necessarily optimal). Let
$ \hat w(t) = w(n^{ 1/d}t).$  We have \begin{align}
\nonumber
  \widetilde{ \mathsf C } _{w}( \tilde \mu _{n},\tLeb_{n})\leqslant & \int_{\Lambda _{n}^{2}}w(d_{\Lambda _{n}}(x,y))M_{n}(\md x,\md y)\\
\nonumber
 = &\int_{\Lambda _{1}^{2}}w(d_{\Lambda _{n}}(n^{1/d}(x,y))n \hat M_{n}(\md x,\md y)\\
\nonumber
 = &\int_{\Lambda _{1}^{2}} \hat w(d_{\Lambda _{1}}(x,y))n \hat M_{n}(\md x,\md y)\\
 \label{eq:scale-w}
  = & n \widetilde{\C}_{ \hat w}( \hat \mu _{n},\hat \Leb_1),
\end{align}
where we have used that {$\hat{w}$ is also a continuity moduli. Now from}  \eqref{eq:BL-general}, \eqref{e:fmu_scat_intensity} and Lemma \ref{lm:approx-sf}, we derive that
\begin{align*}
\mathbf{E}(  \widetilde{\C}_{ \hat w}^{2}( \hat \mu _{n},\hat{\Leb}_{1})) \leqslant & \, \frac{c}{n}\sum_{0< \| m \| \leqslant t_0} {q(\|m\|)} \hat w(\|m\|^{-1})^{2}\bn + c\hat w(t_0^{-1})^{2}\\
 =  & \,c n^{-1} \sum_{0< \| m \| \leqslant  t_0 } {q(\|m\|)} w(n^{1/d}\|m\|^{-1})^{2}\bn +  c w(n^{1/d}t_0^{-1})^{2}\\
\leqslant  & \, c n^{-1}(n^{1/2})^{2}\bn\sum_{0< \| m \| \leqslant  t_0}{q(\|m\|)} w( \|m\|^{-1})^{2} + c w(n^{1/d}t_0^{-1})^{2} \\
\leqslant  & \, {c \bn  \sum_{1  \leqslant k  \leqslant t_0} k^{d-1} \, {q(k)} \, w(k^{-1})^2 + c w(n^{1/d}t_0^{-1})^{2}},
\end{align*}
where in the penultimate inequality we have used that $ w(\lambda x)\leqslant \lambda ^{d/2}w(x)$ for $ \lambda \geqslant 1$  and that $\lim_{t_0 \to \infty}w(n^{1/d}t_0^{-1}) = w(0) = 0$ as $w$ is a modulus of continuity, we obtain that
$$ \mathbf{E}(  \widetilde{\C}_{ \hat w}^{2}( \hat \mu _{n},\hat{\Leb}_{1})) \leqslant c b_n.$$
 {Now substituting in \eqref{eq:scale-w}, we obtain that}
\begin{align*}
\sqrt{\mathbf{E}\mathsf C_{w}^{2}(\tilde \mu _{n},\tLeb_{n})}\leqslant  c n \sqrt{\bn },
\end{align*}
and thereby proving  \eqref{eq:result-finite-w}.}

\begin{proof}[Proof of Theorem \ref{thm:Wbd-infinite-HU}]

We shall again skip $\Lambda_n$ from $\tilde{W}_p, \, p \in  \{1,2\}$ for convenience. \\

\noindent (1). \, In the integrable HU case, the bound on $ \mathbf{E}( | f_{ \hat \mu _{n}} (m)| ^{2})$ can be improved and this forms the basis of our proof. {Set $k = mn^{-1/d}$ for $m \in \Z^d \setminus \{0\}$} {and $t_0 = c_0n^{1/d}$}. We have by {Lemma \ref{lm:var-bd} that $ n^{-1}\var(N)= O(\varepsilon (n^{-1/d}))$ and hence by Lemma \ref{lm:approx-sf} for $\|m\| \leq t_0$,}
\begin{align}
\label{eq:claim-eps}
\mathbf{E}(| f_{ \hat \mu_{n} }(m) | ^{2})=
\mathbf{E}\left( \frac{ 1}{N}S_{n}(k) \right) \leqslant  \frac{ 1}{n}S(k)+ n^{-1}O(\varepsilon (n^{-1/d})).
\end{align}
Also,the HU property and %
\begin{align*}
 | e^{-ik\cdot x}-1 | \leqslant c \min(1, | k\cdot x | ) \, \, \textrm{and} \, \, S(k)= &1+\int_{}e^{-ik\cdot x}\beta (\md x)=   \int_{} \,( \, e^{-ik\cdot x} - 1 \, )\beta (\md x)
 \end{align*}
\dy{ gives us that}
 \begin{align*}
 | S(k) | \leqslant  & \, c \, \int_{}\min(1, | k\cdot x | ) | \beta  | (\md x) \leqslant \, c \, \varepsilon (\| k \|  ),
\end{align*}
recalling  \eqref{eq:def-eps}.
It is useful at this stage to remark that $ \varepsilon $ is non-decreasing.
In particular, $ \varepsilon (n^{-1/d})\leqslant \varepsilon (\| k \| )$  {as $\|k\| = \| m \|n^{-1/d} \geqslant  n^{-1/d}$}. It yields
\begin{align*}\mathbf{E}(| f_{ \hat \mu_{n} }(m) | ^{2})\leqslant \, c \, n^{-1}\varepsilon ( \| k \| ).
\end{align*}
 {For $p=1,2$ using this monotonicity, \eqref{eq:BL-tildeW1} and \eqref{eq:BL-tildeW2}, we derive},
\begin{align*}
\mathbf{E} \widetilde{   \mathsf W}^{2}_{p}(\hat \mu _{n},\hat{\Leb}_1)
 \leqslant & {\frac{c}{n}\sum_{0< \| m \| \leqslant t_0} \frac{ \varepsilon (\|m\|n^{-1/d})}{\|m\|^{2}}}+cn^{-2/d}\\
\leqslant & c \, n^{-1 }{\int_{1}^{c_{0}n^{1/d}} \frac{\varepsilon (rn^{-1/d})}{r^{2}}r^{d-1}\md r} + n^{-2/d}\\
\leqslant &c \, n^{-1}{\int_{n^{-1/d}}^{c_{0}} { \varepsilon (r)} (n^{1/d})^{d-2}r^{d-3}\md r }+n^{-2/d}\\
\leqslant & c \, n^{-2/d}{\int_{n^{-1/d}}^{c_{0}}\varepsilon (r)r^{d-3}\md r}+n^{-2/d}
\end{align*}
which  yields, using  \eqref{eq:scale}, that the rate is indeed given by  \eqref{eq:rate-hu}, i.e.
\begin{align}
\label{eq:rate-hu1}
{\mathbf{E}\widetilde{   \mathsf W}_{2}^{2} (\tilde \mu _{n},\tLeb_{n})}\leqslant c \, n \left(
{1+\int_{n^{-1/d}}^{c_0}\varepsilon (r)r^{d-3}\md r}
\right) \dy{= \alpha_2(n)} ,
\end{align}
with a similar bound for $ {\mathbf{E}\widetilde{   \mathsf W}_1^{2} (\tilde \mu _{n},\tLeb_{n})}$. \\

\noindent (2). \, In $d = 2$, since
\begin{align*}
\int_{0}^{1}\min(1,r | x | )r^{d-3} \md r & =   |x|\1[|x| < 1]   + (\ln |x| +  1) \1[|x| > 1]  ,
\end{align*}
the assumed logarithmic integrability condition of $\beta$ and Fubini's theorem implies that
$$ \int_0^{1} \varepsilon(r)r^{d-3} \md r  =  \dy{\int |x|\1[|x| < 1] |\beta|(\md x)} + \int \1[|x| > 1] \int_{0}^{1}\min(1,r | x | )r^{d-3} \md r |\beta|(\md x) < \infty.$$
So, we have that $\alpha_2(n) \leqslant cn$ in \eqref{eq:rate-hu} where we have chosen $c_0 = 1$. Now the claim follows from \dy{item (1) of the theorem i.e., \eqref{eq:rate-hu1}}.
\end{proof}

\REMOVE{  \footnote{\RL{Ok to remove}}  \sout{\noindent (3). \,  Consider \eqref{eq:rate-hu} with $c_0 = 1$. We shall bound $\int_{n^{-1}}^{1} \varepsilon(r)r^{-2} \md r$ as follows. First observe that}
%% \int_{n^{-1}}^{1}\min(1,r | x | )r^{-2} \md r  & =
%  |x| \, \1[ \, |x| \leq 1 \,] \, \int_{n^{-1}}^1 r^{-1} \md r \,  + \, %\1[ \, |x| \geq n] \, \int_{n^{-1}}^{1} r^{-2} \md r \\
%  & \qquad + \, \1[ \, 1 < |x| < n \,] \, \left[ |x| \int_{n^{-1}}^{|x|
% ^{-1}} r^{-1} \md r  + \int_{|x|^{-1}}^1 r^{-2} \md r \right], \\
\begin{align*}
\int_{n^{-1}}^{1}\min(1,r | x | )r^{-2} \md r  & = |x| \ln(n) \, \1[ \, |x| \leq 1 \,] + (n-1) \, \1[ \, |x| \geq n]
  \\
  & \quad + \1[ \, 1 < |x| < n \,] \left[ \, |x| (\ln(n) - \ln(|x|) + 1)  -1\,  \right].
 \end{align*}
\dy{\sout{So integrating with $|\beta|(\md x)$, we derive that}}
\begin{align*}
|\int_{n^{-1}}^{1} \varepsilon(r)r^{-2} \md r | & \leq \ln(n)|\beta|(|x| \leq 1) + n|\beta|(|x| \geq n) + c\ln(n) \int  \1[ \, 1 < |x| < n \,] |x| \, |\beta|(\md x) \\
& \leq c \ln(n),
\end{align*}
\dy{\sout{where in the last inequality we have used $\int |x||\beta|(\md x) < \infty$ to deduce that $n|\beta|(|x| \geq n) \to 0$ and the finiteness of the last integral. Now again by Item (1) of the theorem, the claim follows.}}}

It remains to prove the lemmata at the beginning of this subsection. For that, define the functions
\begin{align*}
\delta _{n}(x) := & | \Lambda _{n}\cap (\Lambda _{n}+x) | \, \, \textrm{and} \, \,
\gamma _{n}(x) :=  | \Lambda _{n}\cap (\Lambda _{n}+x)^{c} | =  | \Lambda _{n} | -\delta _{n}(x).
\end{align*}

\begin{lemma}
\label{lm:gamma_n}
We have for all $x \in \R^d, n \in \mathbb{N}$,
\begin{align}
\label{eq:bd-gamma_n}
0<c\leqslant \frac{ \gamma _{n}(x) }{n\min(1,\|x\|n^{-1/d})}\leqslant C<\infty
\end{align}
and hence
\begin{align*}
 | \int_{}\gamma _{n}(x)\beta (\md x) | \leqslant   c \, n \, \varepsilon (n^{-1/d}).
\end{align*}
The reverse inequality holds (up to a constant) if $ \beta $ has a constant sign.
\end{lemma}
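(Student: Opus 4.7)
The plan is to obtain the two-sided bound on $\gamma_n(x)$ by writing the volume of the symmetric difference $\Lambda_n\triangle(\Lambda_n+x)$ explicitly on the cube, then use elementary two-sided estimates of $1-\prod_i(1-b_i)$ for $b_i\in[0,1]$. The integral bounds then follow directly from the pointwise bounds together with the definition of $\varepsilon$.

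\textbf{Step 1 (explicit formula).} Let $L=\pi n^{1/d}$, so that $\Lambda_n=[-L,L]^d$ and $|\Lambda_n|=(2L)^d=(2\pi)^d n$. For $x=(x_1,\dots,x_d)\in\mathbb{R}^d$,
\begin{align*}
\delta_n(x)=\prod_{i=1}^d (2L-|x_i|)_+,\qquad \gamma_n(x)=(2L)^d-\prod_{i=1}^d(2L-|x_i|)_+.
\end{align*}
Setting $b_i:=\min(1,|x_i|/(2L))\in[0,1]$, this reads $\gamma_n(x)=(2L)^d\bigl(1-\prod_{i=1}^d(1-b_i)\bigr)$.

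\textbf{Step 2 (pointwise two-sided bounds).} I will use the elementary inequalities, for $b_i\in[0,1]$,
\begin{align*}
\max_i b_i \;\leqslant\; 1-\prod_{i=1}^d(1-b_i) \;\leqslant\; \sum_{i=1}^d b_i,
\end{align*}
together with $\|x\|_\infty\leqslant\|x\|\leqslant\sqrt{d}\,\|x\|_\infty$ and $\sum_i b_i \leqslant d\min(1,\|x\|_\infty/(2L))\leqslant d\min(1,\|x\|/(2L))$. Since $(2L)^d/(2L)=(2\pi)^{d-1}n^{(d-1)/d}$, multiplying by $(2L)^d$ and absorbing dimensional constants yields, for all $x\in\mathbb{R}^d$,
\begin{align*}
c\, n\,\min\!\bigl(1,\|x\|n^{-1/d}\bigr)\;\leqslant\;\gamma_n(x)\;\leqslant\; C\, n\,\min\!\bigl(1,\|x\|n^{-1/d}\bigr),
\end{align*}
with $0<c<C<\infty$ depending only on $d$; in the regime $\|x\|\geqslant n^{1/d}$ the min is $1$, and I use the upper bound $\gamma_n(x)\leqslant(2L)^d=(2\pi)^d n$ together with the lower bound $\gamma_n(x)\geqslant (2L)^{d-1}\|x\|_\infty$ combined with $\|x\|_\infty\geqslant \|x\|/\sqrt{d}\geqslant n^{1/d}/\sqrt{d}$. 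This establishes \eqref{eq:bd-gamma_n}.

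\textbf{Step 3 (integral bound).} From the pointwise upper bound and the definition \eqref{eq:def-eps} of $\varepsilon$,
\begin{align*}
\Bigl|\int_{\mathbb{R}^d}\gamma_n(x)\,\beta(dx)\Bigr|
\;\leqslant\;\int_{\mathbb{R}^d}\gamma_n(x)\,|\beta|(dx)
\;\leqslant\; Cn\int_{\mathbb{R}^d}\min(1,\|x\|n^{-1/d})\,|\beta|(dx)
\;=\; Cn\,\varepsilon(n^{-1/d}).
\end{align*}
If $\beta$ has constant sign, then $|\int\gamma_n\,\beta|=\int\gamma_n\,|\beta|$, and the pointwise lower bound yields the matching reverse inequality $\bigl|\int\gamma_n(x)\beta(dx)\bigr|\geqslant c n\,\varepsilon(n^{-1/d})$.

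No part of this is subtle; the only mild care is in handling the "far'' regime $\|x\|\geqslant n^{1/d}$ (some coordinate may still be smaller than $2L$), which is dispatched in Step~2 by using $\|x\|_\infty\geqslant\|x\|/\sqrt{d}$. The whole proof is a direct calculation from the geometry of cubes, and the key identity is the product formula for $\delta_n$ which reduces everything to one-dimensional estimates.
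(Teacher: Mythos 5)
Your proof is correct and follows essentially the same strategy as the paper: reduce to the explicit product formula $\delta_n(x)=\prod_{i}(2L-|x_i|)_+$ and then bound the difference $(2L)^d-\delta_n(x)$ by one-dimensional estimates. The only real difference is the elementary inequality you invoke: you use the clean two-sided bound $\max_i b_i\leqslant 1-\prod_i(1-b_i)\leqslant\sum_i b_i$, whereas the paper writes a telescoping decomposition of $\prod a_i-\prod b_i$ into $d$ terms and then bounds term by term (for the lower bound, keeping only the single term corresponding to the maximal coordinate $|x_{i_0}|$). Your route is slightly more compact and avoids the paper's explicit restriction to the $\ell^\infty$ norm followed by an appeal to equivalence of norms, since you handle the $\ell^\infty$-versus-Euclidean comparison inline; the content and strength of the estimate are the same. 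One minor loose statement: your remark that $\gamma_n(x)\geqslant(2L)^{d-1}\|x\|_\infty$ in the far regime is only valid when $\|x\|_\infty\leqslant 2L$ (otherwise $\gamma_n(x)=(2L)^d<(2L)^{d-1}\|x\|_\infty$), but your main argument via $\max_i b_i$ already covers that case correctly, so this does not affect the proof.
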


\begin{proof}  {We shall prove \eqref{eq:bd-gamma_n} with $\ell_{\infty}$ norm and by equivalence of norms in $\R^d$, the statement holds also for Euclidean norm. Set $ \|x\| = \max_{i} | x_{i} |$ and $x_+  := \max \{x,0\}$ for $x \in \R$.}
{We have the explicit expressions
\begin{align*}
\delta _{n}(x)= \prod_{i= 1}^{d}(2\pi n^{1/d}- | x_{i} | )_{+}, \, \, \text{and} \, \, \gamma _{n}(x)&
 = \sum_{j =  1}^{d}(2\pi n^{ 1/d})^{j- 1} | x_{j} |  \prod_{k = j +  1}^{d}(2\pi n^{ 1/d}- | x_{k} | )_+,
\end{align*}
where the latter uses the following telescopic formula: for real numbers $ a_{i},b_{i}$, with the convention $ b_{d +  1} = a_{0} =  1, $ it holds that
\begin{align*}  \prod_{i}a_{i}-  \prod_{i}b_{i} = &\sum_{j =   1}^{d}(a_{ 1}\dots a_{j}b_{j +  1 }\dots b_{d}-a_{ 1}\dots a_{j- 1 }b_{j}\dots b_{d}),
\end{align*}
}If $ \|x\|\geqslant 2\pi n^{ 1/d}$, $ \gamma  _{n}(x) = (2\pi )^{d}n$, and the result holds. We have the general upper bound if $ \|x\|<2\pi n^{ 1/d}$ as then $ \gamma _{n}(x)\leqslant {d}\|x\|(2\pi n)^{\frac{ d- 1}{d}}\leqslant n$. We need to prove a lower bound in this case.

{Since $ \|x\| < 2\pi n^{ 1/d}$, there is} $1 \leqslant i_{0} \leqslant d$ such that
\begin{align*}
|x_{i_{0}}|  = \|x\| <2\pi n^{ 1/d}.
\end{align*}
Now if $ \|x\|>\pi n^{ 1/d}$ then $| \delta _{n}(x)| < | \Lambda _{n} | /2$, and $ \gamma _{n}(x)\geqslant cn,$ hence the lower bound follows in this case.
Else $ \|x\|< \pi n^{ 1/d}$ and then $ 2\pi n^{ 1/d}- | x_{k} | \geqslant  \pi n^{ 1/d}$ for all $k$, and so one can lower bound by the single term $ j = i_{0}:$
\begin{align*}\gamma _{n}(x)\geqslant cn^{\frac{ d- 1}{d}}|x_{i_{0}}| = c\|x\|n^{ 1- 1/d}.
\end{align*}
{Using the upper bound for $\gamma_n$ and the definition of $\varepsilon$, one can immediately bound $\int_{}\gamma _{n}(x)|\beta|(\md x)$ and in case $\beta$ has constant sign, we derive the lower bound using the lower bound for $\gamma_n$.}
\end{proof}

\begin{proof}[Proof of Lemma  \ref{lm:approx-sf}]
\dy{Recall that $\pi_d := (2\pi)^d$, the volume of the box $\Lambda_1$.} For $ k= mn^{-1/d}$
\begin{align}
\label{eq:approx-SI}
\mathbf{E}\left[
 \frac{ 1}{N}S_{ {n}}(k)
\right]=&\mathbf{E} \frac{ 1}{N^{2}}\left|
\sum_{x\in \mu  _{n}}e^{-ik\cdot x}
\right|^{2}
\leqslant \dy{\frac{4}{\pi_d^2}}\mathbf{E}\left(
\frac{ 1}{n^{2}}\left|
\sum_{x {\in \mu_n}}e^{-ik\cdot x}
\right|^{2}
\right)+\mathbf{E}\left(
\frac{ 1}{N^{2}}N^{2}\mathbf{1}_{\{N< \dy{\pi_d n}/2\}}
\right).
\end{align}
For the second term, Bienaym\'e-Chebyshev inequality yields
\begin{align*}
\mathbf{P}(N < \dy{\pi_d n}/2)\leqslant \mathbf{P}(| N-\mathbf{E}(N) | \geq  \dy{\pi_d n}/2)\leqslant \frac{4\text{\rm Var}(N)}{\pi_d^2n^{2}}
\end{align*}

The first term of  \eqref{eq:approx-SI} can be approximated by the structure factor: for $ k= mn^{-1/d}$,
\begin{align*}  {
\int_{\Lambda _{n}}e^{-ik \cdot x}\md x
}  = 0,
\end{align*}
and hence following the trick from \cite{hawat2023estimating} via Campbell-Mecke-Little formula, we derive that
\begin{align*}
S(k)   = &1 +\int_{}e^{-ik \cdot x}\beta (\md x)\\
  = &1 + \int_{}e^{-ik \cdot x}\frac{ 1}{ | \Lambda _{n} | }\int_{}\1_{\Lambda _{n}}(y)\md y\beta (\md x)\\
= &1 + \dy{\frac{1}{\pi_d n}} \, \int_{}e^{-ik \cdot x} \, \int{\1_{\Lambda _{n}}(y)}\1_{\Lambda _{n}}(y+x)\md y\beta (\md x)+ \dy{\frac{1}{\pi_d n}}\int \int_{}e^{-ik \cdot x}\1_{\Lambda _{n}}(y)\1_{(\Lambda _{n}-x)^{c}}(y)\md y\beta (\md x)\\
= &1 + \dy{\frac{1}{\pi_d n}} \,\int  \int_{}e^{-ik \cdot (x+y)}\1_{\Lambda _{n}}(x+y)e^{-ik \cdot y}\1_{\Lambda _{n}}(y)\md y\beta (\md x)+
\dy{\frac{1}{\pi_d n}}\int_{}e^{-ik \cdot x}\gamma _{n}(-x)\beta (\md x)
\\
= &1 + \dy{\frac{1}{\pi_d n}} \, \mathbf{E}\dy{\int_{\Lambda_n^2}}\1[x\neq y]e^{-ik \cdot (x - y)}\mu (\md x)\mu (\md y) +\underbrace{\left | {
\int_{\Lambda _{n}}e^{-ik \cdot x}\md x
} \right | ^{2}}_{ = 0}+O\left(
\varepsilon (n^{-1/d})
\right)\\
= &  \dy{\frac{1}{\pi_d n}\mathbf{E}
\sum_{x {\in \mu_n}}e^{-ik \cdot (x-x)} + \dy{\frac{1}{\pi_d n}}\mathbf{E}
\sum_{x \neq y {\in \mu_n}}e^{-ik \cdot (x-y)}
+O\left(
\varepsilon (n^{-1/d})
\right) } \\
= &  \dy{\frac{1}{\pi_d n}}\mathbf{E}
\left| \sum_{x {\in \mu_n}}e^{-ik \cdot x} \right|^2 
+O\left(
\varepsilon (n^{-1/d})
\right)
\end{align*}
\dy{where we have used \eqref{eq:bd-gamma_n} in the fifth equality and \eqref{eq:def-beta} in the penultimate equality.} {In the fifth equality above, we have used that $\gamma_n$ and $\beta$ are even functions; the former by definition and the latter due to stationarity of $\mu$.}
Plugging back into  \eqref{eq:approx-SI} yields   \eqref{eq:lm-4-1}.

If instead nothing is assumed on $ \beta $ we can go backwards in the above computation and treat the first term of \eqref{eq:approx-SI} with
\begin{align}
\mathbf{E}  \int_{\Lambda_n^2}\1[x\neq y]e^{-ik \cdot (x - y)}\mu (\md x)\mu (\md y)  = & \int_{}e^{-i k \cdot (x+y)}\1_{\Lambda _{n}}(y)e^{-ik \cdot y}\1_{\Lambda _{n}}\dy{(y+x)}\md y\beta (\md x)  \nonumber \\
\leqslant & | \Lambda _{n} | \int_{}\1_{\Lambda _{n} + \Lambda _{n}}(x) | \beta |  (\md x). \label{eq:betalambdan_bd}
\end{align}
Hence in the non-integrable case, \dy{recalling $b_n$ from \eqref{eq:bn} and together} with Lemma  \ref{lm:var-bd}
\begin{align*}
\mathbf{E}(N^{-1}S_{n}(k))\leqslant \frac{\pi_d n}{\pi_dn^{2}} \, {|\beta|} (\Lambda _{n} + \Lambda _{n})+\frac{ \var(N)}{\pi_d^2 n^{2}}\leqslant  c \, \frac{\bn}{n}
\end{align*}
which ultimately gives \eqref{eq:lm-4-2}.
\end{proof}

\begin{proof}[Proof of Lemma  \ref{lm:var-bd}]
Using \eqref{eq:def-beta}, we can derive
\begin{align*}
\var(N)= &\mathbf{E}(N^{2})-(\mathbf{E}(N))^{2}\\
= &   | \Lambda _{n} |  +\mathbf{E}(\sum_{x\neq y\in \mu _{n}}1)- | \Lambda _{n} | ^{2}\\
= &  | \Lambda _{n} | +\int_{\Lambda _{n}^{2}}\md x(\md y+\beta (d(y-x)))- | \Lambda _{n} | ^{2}\\
= &   | \Lambda _{n} |  +\int \1[x\in \Lambda _{n},x + z\in \Lambda _{n}]\md x\beta (\md z).
\end{align*}
In the non-integrable case, \dy{arguing as in \eqref{eq:betalambdan_bd},} we obtain
\begin{align*}
\var(N)\leqslant cn \bn .
\end{align*}
In the integrable case, we can write as
\begin{align}\notag
\var(N)= & | \Lambda _{n} | +\int_{\Lambda _{n}\times \mathbb{R}^{d}}\md x\beta (\md z)-\int\gamma _{n}(z)\beta (\md z)\\
\label{eq:var-gamma-beta}= &\pi_d n(1+\beta (\mathbb{R}^{d}))-\int_{}\gamma _{n}(z)\beta (\md z),
\end{align}
and thereby combined with Lemma \ref{lm:gamma_n}, it yields  \eqref{eq:bd-var-int}. \\

If furthermore $ \beta \leqslant 0$, the second part of Lemma \ref{lm:gamma_n} yields
\begin{align*}
\var(N)\geqslant \pi_d n(1+\beta (\mathbb{R}^{d})+\varepsilon (n^{-1/d})).
\end{align*}
\end{proof}
  \subsection{Local to global allocation}
 \label{sec:global}

The following proposition formalizes and generalizes one of the steps in the proof of \cite[Theorem 1(iii)]{holroyd2009poisson} and is very important to deduce properties of global matchings via properties of matchings in finite windows.
Recall that for a measure $\mu$, $ \tilde  \mu _{n}= \frac{n}{N} \mu\1_{\Lambda _{n}}$ (with $\tmu_n = n\delta_0$ for $N=0$) and $\tLeb_n := \frac{1}{(2\pi)^d} \Leb \1_{\Lambda _{n}}$ {are} the renormalised samples with mass $n$ on $ \Lambda _{n}.$
\begin{proposition}
\label{p:matching_limit}
 \RL{Let $ \mu ,\nu $ be independent {simple point processes} with integrable RPCM and $w : [0,\infty) \to [0,\infty)$ be a {continuous increasing} weight function with $ w(0) = 0$.
Assume $\E\widetilde {\C}_{w}(\tmu _{n}, \tilde \nu _{n};\Lambda_n) \leqslant cn$   for all $n \geqslant  1$. Then, there exists  an invariant coupling $M$ of $\mu$  and $\nu$ {that is a.s. a matching and} whose typical distance $ X$ satisfies $ \mathbf{E}w(X)<\infty$.}
%\obsolete{    If $ \mu ,\nu $ are unit intensity  {simple} independent point processes such that   $\E\widetilde {\C}_{w}(\tmu _{n}, \tLeb_{n};\Lambda_n) \leqslant cn$  for all $n \geqslant  1$ and similarly for $\tilde \nu_n$, then  {there is a coupling $M$ of $\mu$ and $\nu$ which is a.s. represented by a  one-to-one mapping and whose typical distance $ X$ satisfies $ \mathbf{E}w(X)<\infty $.}}
%
\end{proposition}
\dy{ While the existence of a coupling of $\mu,\nu$ proceeds by a compactness argument, the fact that the coupling is a matching is an infinite version of the Birkhoff-von Neumann theorem (\cite[Exercise 40]{santambrogio2015optimal}) which guarantees one-to-one correspondance in the case where $\mu,\nu$ are finite simple point processes with same number of points. For \RL{the infinite case}, we exploit a recent result of  \cite{erbar2023optimal}. However, a more explicit (but tedious) construction appears in a previous version of this work on arXiv.}

\REMOVE{
\obsolete{
This result is an infinite version of the well known Birkhoff-von Neumann theorem (\cite[Exercise 40]{santambrogio2015optimal}), which states that the optimal transport cost for.
\begin{proposition}\label{prop:birkhoff}
Let $ n\geqslant  1$, $ \mu = \sum_{i = 1}^{{n}}\delta _{x_{i}} ,\nu  = \sum_{i =  1}^{n}\delta _{y_{i}}$, where the $ x_{i},y_{i}$ are {distinct} points of $ \mathbb{R}^{d}$, and let $ w$ a non-decreasing cost function. Then the optimal coupling $ M$ is realised by a transport map.
\end{proposition}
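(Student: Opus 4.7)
The plan is to identify couplings between $\mu$ and $\nu$ with doubly stochastic matrices, observe that the transport cost is then a linear functional, and invoke Birkhoff--von Neumann to conclude that the minimum is attained at an extreme point, which is a permutation matrix.

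More precisely, first I would note that any coupling $M$ of $\mu$ and $\nu$ is supported on the (finite) product set $\{x_i\} \times \{y_j\}$ and so is entirely described by the nonnegative numbers $m_{ij} := M(\{x_i\} \times \{y_j\})$. The marginal constraints $M(\cdot, \mathbb{R}^d) = \mu$ and $M(\mathbb{R}^d, \cdot) = \nu$ translate (using that the $x_i$ and $y_j$ are distinct) into $\sum_j m_{ij} = 1$ for every $i$ and $\sum_i m_{ij} = 1$ for every $j$. Thus the set of couplings $\mathcal{M}(\mu,\nu)$ is in bijection with the set $\mathcal{D}_n$ of $n \times n$ doubly stochastic matrices.

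Under this identification, the cost functional becomes
\begin{equation*}
\int_{\mathbb{R}^d \times \mathbb{R}^d} w(\|x-y\|)\, M(dx,dy) \;=\; \sum_{i,j=1}^n w(\|x_i - y_j\|)\, m_{ij},
\end{equation*}
which is a linear functional of the matrix $(m_{ij}) \in \mathcal{D}_n$. Since $\mathcal{D}_n$ is a compact convex polytope in $\mathbb{R}^{n^2}$, this linear functional attains its minimum at an extreme point of $\mathcal{D}_n$.

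By the Birkhoff--von Neumann theorem, the extreme points of $\mathcal{D}_n$ are exactly the $n!$ permutation matrices. Hence there exists a permutation $\sigma$ of $\{1,\dots,n\}$ such that the coupling $M = \sum_{i=1}^n \delta_{(x_i, y_{\sigma(i)})}$ minimizes the cost over $\mathcal{M}(\mu,\nu)$. This coupling is supported on the graph of the map $T : x_i \mapsto y_{\sigma(i)}$, hence is realised by a transport map, as claimed. No step is genuinely delicate here; the only thing one must be careful with is the bijection between couplings and doubly stochastic matrices, which uses the assumption that the points $x_i$ (resp.\ $y_j$) are pairwise distinct so that each mass 1 atom of $\mu$ (resp.\ $\nu$) is resolved separately in the marginals.
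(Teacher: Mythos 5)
Your proof is correct and follows exactly the route the paper has in mind: the paper does not spell out a proof but refers to the proposition as a consequence of the Birkhoff--von Neumann theorem (citing Santambrogio, Exercise 40), which is precisely the identification of couplings with doubly stochastic matrices, linearity of the cost, and extremality of permutation matrices that you carry out.
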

The first part  of the proof of Proposition \ref{p:matching_limit} is a weak compactness argument, the second part proceeds by approximating $\tLeb_{n}$ by a re-scaled and re-normalized lattice. We exploit a recent result of  \cite{erbar2023optimal} for the second step, an explicit construction appears in a previous version of this work on arXiv.}}

Before proceeding with this proof, we need to recall some results on vague convergence from \cite[Section 11]{DvJ08}. \dy{For a measure $m$ on $\R^d$ and a measurable function $f : \R^d \to \R$, we denote $m(f) := \int f \, \md m$ whenever the integral is well-defined.} Given measures  $ m,m_{n},n\geqslant  1$ on $ \mathbb{R}^{d}$, say that $ m_{n}$ converges vaguely to $ m$, denoted by $ m_{n} \xrightarrow[]{v}m\;$, if $ m_{n}(f)\to m(f)$ for all  $f \in C_c(\R^d)$, the space of compactly supported (bounded) continuous functions on $ \mathbb{R}^{d}$.  \dy{For random measures $ M_{ n},M$, say $ M_{ n}$ converges to $ M$ vaguely in law if $M_{ n}(f) \convlaw M(f)$ for compactly supported (bounded) continuous $f$, where $\convlaw$ denotes convergence in law.} Recall from   \cite[Proposition 11.1.VI]{DvJ08}  that if a sequence of random stationary measures $ (M_{n})_{ n}$ satisfies
\begin{equation}
\label{e:tight_vague}
\sup_{n}\mathbf{E}M_{n}(A) < \infty
\end{equation}
for all  bounded $A$, then it is tight and hence there exists a subsequence that converges vaguely {in law}.  

%\dy{Say that $ A\subset \mathbb{R}^{d}$ bounded is ($ m$-){\it regular} if it has $ m$-negligible boundary and it admits approximating  {compactly supported} continuous functions $ f_{n}  \uparrow\1_{\textrm{int}(A)} , g_{n} \downarrow\1_{\textrm{cl}(A)}$ as $n\to \infty $, and {for such regular $A$,} $ m_{n}(A)\to m(A)$ if  $ m_{n} \xrightarrow[]{v}m\;$. In the context of an atomic measure $ m $, a regular $A$ {cannot} have atoms on the boundary. {If $m = \Leb$, the Lebesgue measure, then we shall just refer to as regular and negligible sets.} Note that for a stationary random measure $\mu $ with finite intensity $c$, $ \mathbf E \mu (A) = 0$ for any negligible set $A$ and hence any set $ A$ with negligible boundary satisfies a.s. $ \mu (\partial A) = 0$. In this case, $ A$ is a.s. regular for $ \mu $.} \footnote{\RL{Maybe it is simpler to try to do without continuity sets} \dy{Don't we  need in Step 2  ?} \RL{not in the new version} }
%

\REMOVE{
\obsolete{
\begin{lemma}
\label{l:vague_kallenberg}
\begin{enumerate}
%\item
%If $ M_{n}\xrightarrow[n\to \infty ]{v}\;M$ \RL{\cancel{{a.s.}}} (resp. in law), then for any $ A$
%regular, $ M_{n}(A)\to M(A)$ {\RL{\cancel{a.s.}} (resp. in law)}.   \footnote{remove this statement}
\item If a sequence of random stationary measures $ (M_{n})_{ n}$ satisfies
\begin{align*}
\sup_{n}\mathbf{E}M_{n}(A) < \infty
\end{align*}
for all  bounded $A$, then it is tight and hence there exists a subsequence that converges vaguely {in law}.
%{\RL{remove:}In particular, for  {regular} sets $A$, $ M_{n'}(A)\to M(A)$ in law. }  \footnote{\RL{remove last sentence (it's not clear what regular means here)}}
\item  \obsolete{Let $ \mu _{n},{\nu_{n}}$ be finite (deterministic) {atomic measures} for $ n\geqslant 1$, with the same number of atoms each with mass $a_{n}\to 1$   {and also} converging resp. vaguely to simple point processes $ \mu ,{\nu}$.  Let $ M_{n}$ be a coupling of $ \mu _{n},{\nu}_{n}$ that is {induced by a} transport map {$T_{n}:\textrm{supp}(\mu _{n}) \to \textrm{supp}({\nu}_{n})$, where $\textrm{supp}$ denotes support of a measure.}  If $ M_{n}\to M$ vaguely, then $ M$ is a  {matching induced by a transport map} $T:\mu \to {\nu}$.}
\end{enumerate}
\end{lemma}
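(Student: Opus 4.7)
Part (1) is a standard piece of random-measure theory; part (2) is the delicate half, and I would argue it by chasing atoms through the vague limit.

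For part (1), I would invoke \cite[Lemma 4.5 / Theorem 4.11]{kallenberg2017random}: vague tightness of a sequence of random measures on $\R^d$ is equivalent to tightness of the real-valued evaluations $\{M_n(A)\}$ for a countable exhausting family of bounded $A$ with $M$-null boundary. The assumed uniform $L^{1}$ bound $\sup_n \E(M_n(A)) < \infty$ combined with Markov's inequality gives tightness of each $\{M_n(A)\}$, and Prokhorov's theorem then produces a vaguely convergent subsequence in law.

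For part (2), write $\mu = \sum_i \delta_{x_i}$ and $\nu = \sum_j \delta_{y_j}$ (distinct atoms). The first step is to set up a bijection between nearby atoms: for $\varepsilon > 0$ small enough that $B_\varepsilon(x_i)$ isolates $x_i$, regularity of $B_\varepsilon(x_i)$ gives $\mu_n(B_\varepsilon(x_i)) \to 1$, while each atom of $\mu_n$ carries mass $a_n \to 1$, so for $n$ large exactly one atom $x_{k(n,i)}^n$ of $\mu_n$ lies in $B_\varepsilon(x_i)$ and it converges to $x_i$; the symmetric statement holds for $\nu_n, \nu$. Since $M_n = \sum_k a_n \delta_{(x_k^n, T_n(x_k^n))}$, I would test $M_n \to M$ against product functions $g(x) h(y)$ with $g \in C_c(\R^d)$ supported near a fixed $x_i$ and $h \in C_c(\R^d)$ arbitrary; this forces the sequence $T_n(x_{k(n,i)}^n)$ to possess a limit in the Alexandrov compactification $\R^d \cup \{\infty\}$. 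If the limit is finite, call it $y_i^{*}$: the second marginal of $M$ being atomic forces $y_i^{*}$ to coincide with some $y_{\sigma(i)}$, and $M$ acquires mass $1$ at $(x_i, y_{\sigma(i)})$. Finally, bijectivity of $\sigma$ is read off the marginals: a collision $\sigma(i_1) = \sigma(i_2)$ would create mass $\geqslant 2$ at $y_{\sigma(i_1)}$ in the second marginal, violating $\nu(\{y_{\sigma(i_1)}\})=1$, and surjectivity is the mirror-image argument starting from the $y_j$ side. Hence $M = \sum_i \delta_{(x_i, y_{\sigma(i)})}$ represents the bijection $T(x_i) := y_{\sigma(i)}$.

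\textbf{Main obstacle.} The subtle step is ruling out $T_n(x_{k(n,i)}^n) \to \infty$, since vague convergence on $\R^{2d}$ permits mass to escape under the non-proper projection $\pi_1:(x,y)\mapsto x$, so a priori the first marginal of $M$ is only dominated by $\mu$ rather than equal to it. Equality has to be imported from context; in the application to Proposition \ref{p:matching_limit} this is supplied by the uniform cost bound $\E[\tC_w(\tmu_n,\tilde\nu_n)] \leqslant c n$ together with the strict monotonicity of $w$, which quantitatively caps how far a typical atom can be transported and thus forbids mass from disappearing at infinity. The analogous check on the second marginal is symmetric.
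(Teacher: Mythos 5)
Your argument follows the same route as the paper's: Part (1) is the standard tightness criterion for random measures (the paper cites \cite[Theorem~16.15]{kallenberg2002foundations}, carrying the same content as your reference), and Part (2) tracks atoms by testing $M_n\to M$ against compactly supported products near each atom; the paper does exactly this with disjoint regular neighbourhoods $A_x\times B_y$ of the candidate pairs, and the injectivity/bijectivity step you state (a collision would overload an atom of the second marginal) is the same as the paper's observation that each $M(A_x,B_y)\in\{0,1\}$ while $\sum_{y}M(A_x,B_y)=1$.

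Your ``main obstacle'' paragraph, however, puts its finger on a genuine gap in the paper's \emph{own} proof, and you are right to flag it rather than suppress it. The paper opens Part (2) with the assertion $M(f,1)=\lim_n M_n(f,1)=\mu(f)$, but $f\otimes 1$ is not compactly supported on the product space, so vague convergence only yields $M(f\otimes h)\leqslant\mu(f)$ for compactly supported $h$, hence $M(f\otimes 1)\leqslant\mu(f)$ after a monotone limit — and the inequality can be strict: take $\mu_n=\nu_n=\delta_0+\delta_n$ with $T_n$ the transposition of the two atoms; then $M_n\to 0$ vaguely while $\mu=\nu=\delta_0$, and $M=0$ is certainly not a transport map $\mu\to\nu$. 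So the lemma as written implicitly needs, as an extra hypothesis, that $M$ is already known to be a coupling of $\mu$ and $\nu$ (equivalently, a no-escape-to-infinity condition along each non-proper coordinate projection). As you correctly observe, in the sole place the lemma is invoked — Proposition~\ref{p:matching_limit}(2) — this is supplied separately: the marginals $M(\cdot,\R^d)$ and $M(\R^d,\cdot)$ are first identified there, via the cost bound and the Step-3 argument, as independent copies of $\mu$ and $\mathbb{Z}^d$, and it is to \emph{those} marginals that the lemma is applied, so the application is sound. Your diagnosis of both the gap and where its repair comes from is accurate.
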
}

\obsolete{\begin{proof}
The first  item is a consequence of  \cite[Theorem 16.15]{kallenberg2002foundations}.  So we prove the second item alone here. First, for {test functions $f,g \in C_c(\R^d)$}, $ M(f,1) =\lim_{n}M_{n}(f,1) = \lim_{n}\mu _{n}(f) = \mu (f) $ and similarly $ M(1,g) = {\nu}(g)$, hence $ M$ is indeed a coupling between $ \mu $ and ${\nu}$, {whose} support is contained in $ \mu \times {\nu}.$ {Since $ \mu $ (resp. $ \nu $) is assumed to have distinct atoms, for $ x\in { \rm supp}(\mu )$, there is $ A_{x}$ (resp. $ B_{y}$) open containing $ x\in { \rm supp}(\mu )$ (resp. $ y\in { \rm supp}(\nu )$) such that $ A_{x} \cap A_{x'} = \emptyset $ (resp. $ B_{y}\cap B_{y'} = \emptyset $) for $ x\neq x'$ (resp. $ y\neq y'$).}
% For $ x,y\in \mathbb{R}^{d}$, let $ A_{x}, B_{y}$ regular for resp. $ \mu $ and ${\nu}$ (i.e. $ \mu ,{\nu}$ do not have {atoms} on the boundary {of $A_x,B_y$ resp.}) such that $ \mu (A_{x}) = \1[x\in \mu],{\nu}(B_{y}) = \1[y\in {\nu}]$, and that $ A_{x},x\in \mu $ (resp. the $ B_{y},y\in {\nu}$) are disjoint.
 For $ x\in \mu ,y\in {\nu}$, vague convergence yields for $ n$ large enough, $ \mu _{n}(A_{x}) = {\nu}_{n}(B_{y}) = a_{n}$, {hence for such large $n$,}
\begin{align*}M_{n}(A_{x},B_{y}) = a_{n}\1[y = T_{n}(x)],
\end{align*}
and so $ M(A_{x},B_{y}) = \lim_{n}M_{n}(A_{x},B_{y})\in \{0,1\}$. We finally have for $ x\in \mu $
\begin{align*}
1 = \mu (A_{x}) = M(A_{x},\mathbb{R}^{d}) = \sum_{y\in {\nu}}\underbrace{M(A_{x},B_{y})}_{\in \{0,1\}}.
\end{align*}
Hence for each $ x\in \mu $ there is a unique $ y\in {\nu}$ such that $ M(A_{x},B_{y}) = 1$, and we set $ T(x): = y$ which indeed represents $ M$ as a transport map.
\end{proof}}
}

\begin{proof}[Proof of Proposition \ref{p:matching_limit}]

 {The first two steps are actually valid for any stationary random non-negative measure with finite intensity while the third step exploits the assumption of simple point processes, we believe that this can be avoided.} \\

 \underline { First step: Existence of \dy{  an invariant} infinite coupling $ M$:}  \\

%\begin{lemma}
%\label{lm:vague-limit-M}
%Let $ \tilde \mu _{n},\tilde \nu _{n}$ {be random measures that converge vaguely to stationary random measures $ \mu ,\nu $ respectively}, and with a coupling {$m_{n}$} which has linear cost for toroidal distance. Then the  stationarized version ({to be defined} in  \eqref{eq:stationarized}) $ M_{n}\to M$ vaguely  {for a translation invariant $M$.}
%\end{lemma}

%\begin{proof}
\dy{Let $\epsilon > 0$. By definition \eqref{e:toric_cost}, for every realization} {there is a transport plan $ m_{n}$ from $ \tilde \mu_{n}$ to $ \tilde \nu _{n}$ such that}
$$\int _{\Lambda _{n}^{2}} w(d_{n}(x,y)) m_n(\md (x,y)) \leq \widetilde {\C}_{w}(\tmu _{n}, \tilde \nu _{n};\Lambda_n) + \epsilon n.$$
\dy{Thus by assumption $\E \widetilde {\C}_{w}(\tmu _{n}, \tilde \nu _{n};\Lambda_n) \leq cn$, we have that} 
\begin{equation}
\label{e:dnmn1}
\E \int _{\Lambda _{n}^{2}} w(d_{n}(x,y)) m_n(\md (x,y)) \leqslant cn,
\end{equation}
$m_n(\cdot,\R^d) = \tilde \nu _n$, $m_n(\R^d,\cdot) =  \tilde \mu_n$ where $d_n$ is the toroidal distance  {on $\Lambda_n$} \dy{and we have denoted the new constant as well by $c$, with an abuse of notation}. Suppose we tile $\R^d$ with randomly shifted copies of $\Lambda_n$ and place $ \tilde\mu_n,\tilde \nu _{n}$ in each of them
% (\RL{  stille denote by $ \tilde \mu _{n},\tilde \nu _{n}$ these infinite extensions } )
and \dy{match by $ m_{n}$ in each copy}. We call this transport map $M_n$.
Formally,
\begin{align}
\label{eq:stationarized}
M_n(A , B) = \sum_{z \in \Z^d} m_n\big((A-U_{n} + {2\pi}n^{1/d}z) , (B-U_{n} + {2\pi}n^{1/d}z) \big),\end{align}
for $A,B \subset \R^d$ and where $U_{n}$ is uniform in $\Lambda_n$ and independent of $\mu$ and $ \nu $. Observe that \dy{the translation-invariance of $\sum_{z \in \Z^d}\delta_{ {2\pi}n^{1/d}z - U_{n}}$ shows that $M_n$'s are  invariant or equivariant}, i.e.
\begin{align}
\label{eq:trans-inv-Mn}  {M_{n}(A + x,B + x)\equlaw M_{n}(A,B)},
\end{align}
for all $ A,B \subset \R^d$ and $x \in \R^d$. Note that $ M_{n}(A,B)= 0$ if $ A,B$ are contained in different shifted copies. \\

Note that $\E M_n(A,B) \leqslant \E M_n(A,\R^d)$ and $\E M_n(A,\R^d)$ is translation invariant in $A$. Thus $\E M_n(A,\R^d) = c_{n}|A|$ for some $c_{n} \in [0,\infty]$. For $A = \Lambda_n$, we have that  $\E M_n(A,\R^d) = \E \tmu_n(\Lambda_n) = n$ and so we deduce that $c_{ n} = 1$. Similarly, we can deduce that $\E M_n(\R^d,B) = |B|$. Hence, we have that $\mathbf{E} M_{n}(A,B)\leqslant \min(| A | , | B | )$. \\

In particular, $ \mathbf{E}M_{n}(K)<\infty $ for bounded $K \subset \R^d \times \R^d$. Thus, \dy{appealing to \eqref{e:tight_vague},} the sequence $M_n$ is tight in vague topology  and has a subsequential distributional limit under vague topology, say $M_{n'} \convlaw M$. \dy{For a compactly supported continuous function $f : \R^d \times \R^d \to \R_+$ and $x \in \R^d$, define $f_x(\cdot,\cdot) := f(x+\cdot,x+\cdot)$, 
also a bounded continuous function. By invariance of $M_n$'s, we have that $M_n(f_x) \equlaw M_n(f)$ and hence the subsequential convergence argued above gives that $M_n(f_x) \to M(f)$ and $M_n(f) \to M(f)$ as $n \to \infty$. Thus $M(f_x) \equlaw M(f)$, which gives that $M$ is invariant.} \\

\underline{{Second Step: $M$ has finite typical cost.}}  \\
\dy{Recalling \eqref{def:typical-dist} and \eqref{e:typical_cost_w}, to show that the typical transport distance under \RL{$ M$} has finite cost means that
\begin{equation}
\label{e:typ_M_cost}
\E \int \int w(\|x-y\|) \1[x \in \Lambda_1] M(\md (x,y)) < \infty.
\end{equation}
}
Denote $ \tilde d_{n}(x,y)$ to be the distance which is $ 0$ if $ x,y$ are in different {randomly shifted copies of $\Lambda_n$}, and the toric distance of the copy otherwise i.e.,
$$ \dy{ \tilde d_{n}(x,y) := \sum_{z \in \Z^d} d_n(x-U_n-{2\pi}n^{1/d}z,y-U_n-{2\pi}n^{1/d}z)}.$$
So $\E[\int \int w(\tilde d_{n}(x,y))\1[x \in A] M_n(\md (x,y))]$ is translation invariant in $A$ and hence its intensity is either infinity or of the form $ \textrm{const}\cdot \Leb$, and for $A = \Lambda_n$,  by \eqref{e:dnmn1}
$$ \E \int \int w(\tilde d_{n}(x,y))\1[x \in \Lambda_n] M_n(\md (x,y)) = \E \int _{\Lambda _{n}^{2}} w({d_{n}(x,y)}) m_n(\md (x,y)) \leqslant c \Leb(\Lambda_n).$$
Thus we obtain that, there exists a constant $c$ such that for all $A$
\begin{equation}
\label{e:WpMnA1}
\E \int \int w(\tilde d_{n}(x,y))\1[x \in A] M_n(\md (x,y)) \leqslant c \Leb(A).
\end{equation}

\dy{Now let $A = \Lambda_1$ and $B$ be a bounded set.} Let $ n_{0}$ be such that $ \sup_{x\in \Lambda_1,y\in B}\|x-y\|<n^{1/d}/2$ {for all $n \geqslant  n_0$}. Hence for $x \in \Lambda_1,y\in B$, either $ x,y$ are in different shifted copies of $\Lambda_n$ and $ M_{n}(\md x,\md y)= 0$ (and $\tilde d_{n}(x,y) = 0$), or they are in the same copy of $\Lambda_n$, but at distance $ < \frac{n^{1/d}}{2}$, hence $\tilde d_{n}(x,y)= \|x-y\|.$ In any case,
\begin{align*}
w(\|x-y\|)M_{n}(\md x,\md y)= w(\tilde d_{n}(x,y)) M_{n}(\md x,\md y).
\end{align*}

{For $ n\geqslant n_{0}$, from \eqref{e:WpMnA1}, it follows that}
\begin{align*}
\mathbf{E} \int \int w(\|x-y\|)\1[x \in \Lambda_1, y \in B] M_n(\md (x,y)) & = \E \int \int w(\tilde d_{n}(x,y))\1[x \in \Lambda_1, y \in B] M_n(\md (x,y)) \\
& \leqslant c \Leb(\Lambda_1) < \infty.
\end{align*}
\dy{We have shown in the First Step that $M$ is the subsequential distributional limit under vague topology of $M_n$. For convenience, we now assume that $M_n \convlaw M$ in vague topology.} \RL{For any continuous function $0\leqslant  f(x,y)\leqslant \1[x\in \Lambda_1,y\in B]$, the vague convergence in law yields that, with $ \tilde w(x,y) = w(\|x-y\|)f(x,y)$, 
$$X_{ n}: =  \int \int \tilde{w}(x,y) \, M_n(\md (x,y)) \convlaw X: =  \int \int \tilde{w}(x,y) \, M(\md (x, y)).$$
\RL{The upper bound then is a general feature of  convergence in law, through  a truncation argument for non-negative variables: for $ T>0$ fixed, $ \mathbf E X_{ n}\wedge T\to \mathbf E X \wedge T$, and $ \mathbf E X_{ n}\wedge T\leqslant c\Leb(A)$, so that with monotone convergence, we derive
$$\mathbf E  \int \int \tilde{w}(x,y) \, M(\md (x, y)) \leqslant c\Leb(\Lambda_1) < \infty.$$ 
}
 Then letting $ f(x,y)$ increase pointwise to $\1[x\in \Lambda_1]$ and using the monotone convergence theorem again  gives that}
$$ \E \int \int w(\|x-y\|) \1[x \in \Lambda_1 ] M(\md (x,y))  \leqslant c \Leb(\Lambda_1) < \infty$$ 
  as  {needed to verify finite typical cost \dy{in \eqref{e:typ_M_cost}}.} \\

 \underline {Third Step: $ M$ is a coupling of independent copies of $\mu$ and $\nu$}  \\
We will now show that $M$ is a transport plan between {independent copies of}  $\mu$ and $\nu $ defined  on a suitable probability space. %\RL{\sout{We will show that the law of $ (M_{ n}(\cdot ,\mathbb{R}^{ d}), M_{ n}(\mathbb{R}^{ d},\cdot ))$ has the same law as independent versions $ (\mu ',\nu ')$, using the void probability characterisation (...), i.e. we must show for compact $ K,L,$}
%\begin{align*}
% \mathbf P (M_{ n}(K,\mathbb{R}^{ d}) = \emptyset , M_{ n}(\mathbb{R}^{ d},L)) = %\emptyset ) = \mathbf P (\mu (K) = \emptyset ,\nu (L) = \emptyset ).
%\end{align*}
%}

Consider {\dy{non-negative} test functions $f,g \in C_c(\R^d)$} supported by resp. $ K,L$ compact convex sets. \RL{For $u \in \R^d$, let $\theta (K,u)$ be $ \infty $ if $ K-u $  touches one of the boundaries $\partial (z2\pi n^{ 1/d} + \Lambda _{ n}),z\in \mathbb{Z} ^{ d} $, and otherwise it is the element $ z\in \mathbb{Z} ^{ d}$ such that  $   K - u\subset (2\pi n^{ 1/d}z +  \Lambda _{ n})$.} Hence, under the event  $ \Omega_{n}: = \{\theta (K,U_{ n}) \neq  \infty \}$
%
%\RL{
%\begin{align*}
% (M_{ n}(K,\mathbb{R}^{ d}),M_{ n}(\mathbb{R}^{ d},L)) = ( \tilde \mu _{ n} \tilde K), \tilde \nu _{ n}( \tilde L))
%\end{align*}
%(...)}
%
\begin{align*}
(M_{n}(f,\mathbb{R}^{d}),M_{n}(\mathbb{R}^{d} ,g)) &= (\tilde \mu _{n}( \tilde f), \tilde \nu_{n}(\tilde g))
\end{align*}
where $ \tilde f(x) = f(x +  U_{ n} +  \theta (K,U_{ n})), \tilde g(x) = g(x + U_{ n} +  \theta (L,U_{ n}))$.
 %{Then, under $ \Omega _{n},$
%\begin{align*}
%V_{n}:=& (M_{n}(K,\mathbb{R}^{d} ),M_{n}(\mathbb{R}^{d}   ,L)) = (\frac{n}{\mu (\Lambda _{n})}\mu (K + \theta _{K}),\frac{n}{\nu (\Lambda _{n})}\nu (L + \theta _{L})),
%\end{align*}
%
\dy{Recall that $\tilde \mu_n = \frac{n}{\mu(\Lambda_n)}\mu_n$ and similarly $\tilde \nu_n$.} {So, the random variables $\tilde \mu _{n}( \tilde f)$ and $\tilde \nu_{n}(\tilde g)$}  are independent because $\mu (\Lambda _{n}),\nu (\Lambda _{n})$ are independent and ${\mu_n}(\tilde f), {\nu_n}(\tilde g)$ are independent. Only the second point requires a proof:
 \dy{
\begin{align*}
\mathbf{E}e^{-\mu_n(\tilde f)-\nu_n(\tilde g)} &=  \mathbf{E}\bigl( e^{-\mu_n(\tilde f)-\nu_n(\tilde g)} \; \mid \;U_{n}) \bigr)\\
&=   {\mathbf{E}\bigl( \mathbf{E}( e^{-\mu_n(\tilde f)} \; \mid \; U_n) \, \mathbf{E}(e^{-\nu_n(\tilde g)} \;|\;U_{n}) \bigr)} \\
 & = \mathbf{E}(e^{-\mu_n(\tilde f)}) \, \mathbf{E}(e^{-\nu_n(\tilde g)})
\end{align*}
 where we have used that \dy{$\mu_n ( \tilde f),\nu_n ( \tilde g)$} are independent conditionally on $U_n$ in the second equality and in the third equality, we use stationarity and independence with $U_n$ as well as independence between $ \mu $ and $ \nu.$}
\REMOVE{for {Borel} sets $ I,J\subset \mathbb{R},$
\begin{align*}
\mathbf{P}(\mu_n(\tilde f)\in I,\nu_n(\tilde g)\in J) &=  \mathbf{E}\bigl( \mathbf{P}(\mu_n ( \tilde f)\in I, \nu_n ( \tilde g)\in J\; \mid \;U_{n}) \bigr)\\
&=   {\mathbf{E}\bigl( \mathbf{P}(\mu_n (  \tilde f)\in I \; \mid \; U_n) \, \mathbf{P}( \nu_n ( \tilde g)\in J\;|\;U_{n}) \bigr)} \\
 & = \mathbf{P}(\mu_n (\dy{\tilde f})\in I) \, \mathbf{P}(\nu_n (\dy{\tilde g})\in J)
\end{align*}
 where we have used that \dy{$\mu_n ( \tilde f),\nu_n ( \tilde g)$} are independent conditionally on $U_n$ in the second equality and in the third equality, we use stationarity and independence with $U_n$ as well as independence between $ \mu $ and $ \nu.$}
%
%Since $\1_{\Omega _{n}}\to 1$ a.s. \footnote{ \RL{ for a subsequence?} } , looking at the first component of $ V_{n}$, we have the convergence in laws
%%
%\begin{align*}
%\tilde \mu _{n}(K + \theta _{K})\to M(K,\mathbb{R}^{d}), \textrm{ and } \tilde \nu _{n}(L + \theta_{L})\to M(\mathbb{R}^{d} ,L).
%\end{align*}
%%
% By unicity of the limit, it implies that $ M(K,\mathbb{R}^{d})\equlaw \mu (K),M(\mathbb{R}^{d},L)\equlaw \nu (L).$
{Note that $\tilde \mu_n,\tilde \nu_n$ converge vaguely in law to $\mu,\nu$ respectively.} \dy{Relying on this and the independence of $\tilde \mu _{n}( \tilde f)$ and $\tilde \nu_{n}(\tilde g)$ shown above, we can derive that 
%\RL{replace $ e^{ -\mu (f)} \to 1_{ \mu (   K) = \emptyset }, ...$}
%
\begin{align*}
\mathbf{E}e^{-\mu (f)} \mathbf{E}e^{-\nu (g)}  = & \mathbf{E}e^{-\mu (\tilde f)} \mathbf{E}e^{-\nu (\tilde g)}  \\
 = & \lim_{n}\mathbf{E}e^{-\tilde \mu_n (\tilde f)} \, \mathbf{E}e^{-\tilde \nu_n(\tilde g)}\\
  = & \lim_{n}\mathbf{E}e^{-\tilde \mu_n (\tilde f) -\tilde \nu_n(\tilde g)} \\
 = & \lim_{n}\mathbf{E}\Big(e^{-\tilde \mu_n (\tilde f) -\tilde \nu_n(\tilde g)}\1_{\Omega _{n}}\Big) + O\Big(\mathbf{P}(\Omega _{n}^{c})\Big) \\
 = & \lim_{n}\mathbf{E}\Big(e^{-M_{n}(f,\mathbb{R}^{d})- M_{n}(\mathbb{R}^{d},g)}\1_{\Omega _{n}}\Big) + O(\mathbf{P}\Big(\Omega _{n}^{c})\Big) \\
= & \mathbf{E}e^{-M(f,\mathbb{R}^d)- M(\mathbb{R}^d ,g)}
\end{align*}
using} $ \mathbf{P}(\Omega _{n})\to 1$ {and without loss of generality, assuming $M_n$ converge vaguely to $M$ in law.} \dy{Thus we have that 
$$ \mathbf{E}e^{-(M(f,\mathbb{R}^d) + M(\mathbb{R}^d ,g))} = \mathbf{E}e^{-\mu(f)} \, \mathbf{E}e^{-\nu(g)} =  \mathbf{E}e^{-M(f,\mathbb{R}^d)} \, \mathbf{E}e^{-M(\mathbb{R}^d ,g)}.$$
} \dy{Now given compact sets $K,L$ and $s,t \geq 0$, there exist sequence of bounded continuous functions $f_n,g_n$ such that $f_n \downarrow t\1_K$ and $g_n \downarrow s\1_L$ respectively. By monotonic approximation, the above identity yields that  
$$ \mathbf{E}e^{- tM(K,\mathbb{R}^d) - sM(\mathbb{R}^d ,L)} =  \mathbf{E}e^{-tM(K,\mathbb{R}^d)} \, \mathbf{E}e^{-sM(\mathbb{R}^d ,L)},$$
thereby showing the independence of $(M(K,\mathbb{R}^d),M(\mathbb{R}^d ,L))$ for all compact sets $K,L$. This guarantees the independence of $(M(\cdot,\mathbb{R}^d),M(\mathbb{R}^d ,\cdot))$ via \cite[Corollary 1.3.15]{baccelli2020random} {and that $M(\cdot,\mathbb{R}^d),M(\mathbb{R}^d ,\cdot)$ are simple point processes}.} \\
%By monotonic approximation, we can extend the above identity to all non-negative continuous functions $f,g$.} \red{If the above identity holds for all non-negative measurable functions $f,g$ then using \cite[Proposition 1.3.16]{baccelli2020random}, we obtain that $ M$ is a coupling between an independent copy of $ \mu $ and an independent copy of $\nu$.} \footnote{Argument incomplete} \\
%
\REMOVE{for $ I,J$ open intervals of $ \mathbb{R}$,
 \begin{align*}
\mathbf{P}(\mu ( f)\in I)\mathbf{P}(\nu (g)\in J)         %%&\mathbf{P}(M(K,\cdot )\in I)\mathbf{P}(M(\cdot ,L)\in J)\\
%%= &\lim_{n}\mathbf{P}(M_{n'}(K,\mathbb{R}^{d} )\in I)\mathbf{P}(M_{n'}(\mathbb{R}^{d} ,L)\in J)\\
 = & \mathbf{P}(\mu (  \tilde f)\in I)\mathbf{P}(\nu ( \tilde g)\in J)\\
 = & \lim_{n}\mathbf{P}(\tilde \mu _{n}( \tilde f)\in I)\mathbf{P}(\tilde \nu _{n}( \tilde g)\in J)\\
  = & \lim_{n}\mathbf{P}(\tilde \mu _{n}( \tilde f)\in I,\tilde \nu _{n}( \tilde g)\in J)\\
 = & \lim_{n}\mathbf{E}\Big(\1[\tilde \mu_{n} ( \tilde f)\in I, \tilde \nu_{n} ( \tilde g) \in J]\1[\Omega _{n}]\Big) + O\Big(\mathbf{P}(\Omega _{n}^{c})\Big) \\
 = & \lim_{n}\mathbf{E}\Big(\1[M_{n}(f,\mathbb{R}^{d}) \in I,M_{n}(\mathbb{R}^{d},g)\in J]\1[\Omega _{n}]\Big) + O(\mathbf{P}\Big(\Omega _{n}^{c})\Big) \\
%%   = &\lim_{n}\mathbf{P}(M_{n}(K,\mathbb{R}^{d})\in I,M_{n}(\mathbb{R}^{d},L)\in J,\Omega _{n}) + o(1)\\
%%= &\cancel{\lim_{n}}\left[\mathbf{P}(\Omega _{n})\mathbf{E}_{\cancel{\Omega _{n}}}(
%% \mathbf{P}(\tilde \mu _{n}(K)\in I \;|\; \Omega _{n})\mathbf{P}(\tilde \nu _{n}(L )\in J \;|\; \Omega _{n}) )+ O(\mathbf{P}(\Omega _{n}^{c}))
%%\right]\\
%%=&\lim_{n} \mathbf{P}(V_{n}\in I\times J)\textrm{ (cond. independence) } \\
= & \mathbf{P}(M(f,\mathbb{R}^d)\in I,M(\mathbb{R}^d ,g)\in  J)
\end{align*}
using $ \mathbf{P}(\Omega _{n})\to 1$ {and without loss of generality, assuming $M_n$ converge vaguely to $M$ in law.}} 

\underline {\RL{Fourth Step: if $ \mu ,\nu $ are simple independent point processes, $ M$ can be chosen to be a.s. a matching}}  \\

\RL{This step is actually similar to the proof of Proposition 2.11 in  \cite{erbar2023optimal}. % \sout{We provide another proof in a previous version of this work, based on a very intricate construction. For the sake of simplicity, we rather directly refer here to their Proposition 2.11, which also holds under more general assumptions.}
To be  precise, they define a distance   $   \mathsf C(  \mathscr  L_{ \mu },  \mathscr  L_{ \nu })$ between the laws $  \mathscr  L_{ \mu },  \mathscr  L_{ \nu }$ of resp. $ \mu ,\nu ,$ which satisfies with their Proposition 2.4
\begin{align*}
  \mathsf C(  \mathscr  L_{ \mu },  \mathscr  L_{ \nu })\leqslant \inf_{ M}\mathbf E \int_{\Lambda _{ 1} \times \mathbb{R}^{ d}}w(x-y) M(dx,dy)
\end{align*}
where the infimum is over \dy{  invariant} couplings $ M$ between independent versions $ \mu ',\nu '$ of $ \mu $ and $ \nu $ (note that independence is not required in their result). The RHS in fact corresponds with Fubini's theorem  to the definition of the typical distance from  \eqref{def:typical-dist} for the equivariant matching $ M$ built at   first step above \RL{(the invariance     comes from  \eqref{eq:trans-inv-Mn} which passes to the limit when $ M_{ n}\to M$)} : by definition of the typical distance,
\begin{align*}
\mathbf E w(X) = \int_{\mathbb{R}_{  + }}\mathbf E \int_{}{\bf 1}\{ w(\|x-y\|)\geqslant r \}M(dx,dy) dr= \mathbf E\int_{} \underbrace{\int_{}{\bf 1}\{w (\|x-y\|)\geqslant r \}dr}_{ = w(\|x-y\|)}M(dx,dy).\end{align*}}\RL{
Then  \cite[Proposition 2.11]{erbar2023optimal} allows to conclude directly that there is an equivariant matching between two versions $ \mu ',\nu '$ of $ \mu ,\nu $ which typical distance $ X_{ \text{\rm{match}}}$ satisfies $ \mathbf E w(X_{ \text{\rm{match}}})<\infty .$
}
\end{proof}

\begin{remark}
In the result of  \cite{erbar2023optimal}, it is not assumed that the point processes are simple. In fact, if they are not simple, there is not always a matching satisfying the conclusion of Proposition \ref{p:matching_limit}. It does not contradict \cite[Proposition 2.11]{erbar2023optimal} because a more general definition of a matching is used. \dy{Specifically, they define matching as a point process in the product space but not necessarily induced by an actual bijection. However, we define matchings via bijections and hence we need the assumption of simplicity for point processes.}
\end{remark}

\subsection{Proofs of Theorems \ref{thm:std} and \ref{thm:hu} }
\label{sec:proofs_thms12}

\dy{In both the proofs below, we first justify the case of independent simple point processes $\mu,\nu$ with integrable RPCM and then address the case of shifted lattices. For the latter, \RL{we use Example  \ref{ex:iidpertlattice}, a perturbed lattice that has an integrable RPCM, and allows to bridge the results between the two categories.} However, it is possible with some more effort to use \cite{erbar2023optimal} as in the last part of proof of Proposition \ref{p:matching_limit} to give a more direct argument.}

\begin{proof}[Proof of Theorem \ref{thm:std}]
Consider first the case $d \geq 3$ with the weight function $w(x) = x^2$. By Theorem \ref{thm:Wbd-infinite}, we have that there exists a finite constant $c$ such that $\E \widetilde {\C}_{w}(\tmu _{n}, \tLeb _{n};\Lambda_n) \leqslant cn$  for all $n \geqslant  1$ and similarly for $\tilde \nu_n$. \dy{By triangle inequality, we have that for some finite constant $c$,}
$$ \E \widetilde {\C}_{w}(\tmu _{n}, \tilde \nu_n ; \Lambda_n) \leq 4 \Big( \E \widetilde {\C}_{w}(\tmu _{n}, \tLeb _{n};\Lambda_n) + \E \widetilde {\C}_{w}(\tilde \nu_n, \tLeb _{n};\Lambda_n) \Big) \leq cn, $$
\dy{for all $n \geq 1$. Now using Proposition \ref{p:matching_limit}, there exist point processes $ \mu ' (\equlaw \mu), \nu ' ( \equlaw \nu)$ on \RL{the same probability space}  and a translation-invariant matching $T$ between $\mu'$ and $\nu'$ such that its  typical distance $X$ satisfies $ \mathbf E w(X) < \infty $.}

{Now we consider the cases $d = 1,2$. Here it suffices to prove \eqref{e:M0-inf_matching} i.e., there exists a translation-invariant matching from $\mu$ to $\nu$ such that the typical distance $X$ satisfies for some constant $c$,
\begin{equation}
\label{e:M0-inf_matching2}
\pr(X \geqslant  r) \leqslant c r^{-d/2} \sqrt{\sigma_{\mu}(r) + \sigma_{\nu}(r)}, \, \, r \geq 1.
\end{equation}
\dy{ The proof for $d =1,2$ follows from this bound and boundedness of $\sigma_\mu,\sigma_\nu$, which are implied by the integrability of the RPCM $\beta$.  To check this, verify by contradiction that for $ x,r>10, x^{d/2}\ln(x)^{-\gamma }>r$ implies $ x^{d/2}>cr\ln(r)^{\gamma }$. Now for $\gamma > 1$, substituting the above tail probability bound \eqref{e:M0-inf_matching2} along with boundedness of $\sigma_\mu,\sigma_\nu$ yields that
\begin{align*}
\mathbf E \left(
\frac{ X^{d/2}}{1 +  | \ln X | ^{ \gamma }}
\right) & \leqslant 10 + \int_{10}^{\infty} \mathbf{P}\Big( \frac{ X^{ \RL{d/2}}}{1 +  | \ln X | ^{ \gamma }} \geq r \Big) \md r \\
& \leqslant 10 + \int_{10}^{\infty} \mathbf P (X^{d/2} \geqslant cr\ln r^{ \gamma }) \md r <\infty \\
& \leqslant 10 + \int_{10}^{\infty} \frac{c}{r(1 + (\ln r)^{ \RL{\gamma }})} \md r < \infty,
\end{align*}
which concludes the proof.} 
The requirement of \eqref{e:M0-inf_matching} can be further reduced to exhibiting a $\Z^d$-translation invariant matching $T$ from \dy{$\nu$ to $\mu$} satisfying \eqref{e:M0-inf_matching2}.} \dy{Thanks to \eqref{def:typical_matching_dist}, \eqref{e:M0-inf_matching2} is implied by
\begin{equation}
\label{e:Exp_T_match}\mathbf{E}\Big( \dy{\nu} \{x \in [0,1]^d : \|T(x) - x\| \geqslant  r\} \Big) \leqslant  c r^{-d/2} \sqrt{\sigma_{\mu}(r) + \sigma_{\nu}(r)}, \, \, r \geq 1.
\end{equation}
}

 {The  construction in \cite[Theorem 1(ii)]{holroyd2009poisson} proceeds as follows. Let $ Z_k,k\geqslant 0$  be i.i.d. uniform random vectors in $\{0,1\}^d$ independent of $\mu,\nu$. We successively partition \RL{$\R^d$} into  %\footnote{cubes of sidelength $ 2^{ k}$ that have been  shifted randomly by $ U_{ k} = \sum_{i = 0}^{ k-1}2^{ i}Z_{ i}$}
  randomly centered cubes of side-length $2^k$ \RL{ and randomly match points iteratively. More precisely, let    $ {U_k} = \sum_{i=0}^{k-1}2^iZ_i$  for $k \geq 0$ and define the   {\it $k$-level cubes} as the}
$$2^kz +U_{ k} +  [0,2^k]^d \RL{, z\in \mathbb{Z} ^{ d}}%+ \sum_{i=0}^{k-1}2^iZ_i, \, z \in \Z^d
.$$
At level $k=0$ all points are considered unmatched. At any level $k\geqslant 1$, for any $ k$-level cube $ C$, \RL{let $ m_{ C,\mu },m_{ C,\nu }$ the number of yet unmatched points  of $ \mu \cap C,\nu \cap C$ respectively. Form $ m_{ C}: = \min(m_{ C,\mu },m_{ C,\nu })$ pairs of matched points of  $\mu$ and $\nu$ within $ C$ using an arbitrary rule ; for example, we can use the stable marriage algorithm  as in \cite{hoffman2006stable}.  The points still unmatched  after this step are carried forward to the next level $ k + 1$ and the procedure is repeated.   Eventually, all points of either $ \nu$ or $ \mu$ are matched, all remaining points (i.e., the unmatched points) are said to be matched to $ \infty $. Denote by $ T$ the corresponding matching from $\nu \cup \{\infty\}$ to $\mu \cup \{\infty\}$. If $T$   satisfies the tail bound \eqref{e:Exp_T_match}, then letting $r \to \infty$, we can see that a.s. every point is matched to a finite distance, i.e. $T$ matches every $\nu$ point to a $\mu$ point, one can argue vice-versa as well; see \cite[Lemma 7]{holroyd2009poisson}.} Thus, we have that $T$ is a (perfect) matching between $\mu$ and $\nu$ and by construction $\Z^d$-invariant.  

Now we show the tail bound \eqref{e:Exp_T_match} for the partial matching $T$ as described above. \RL{The proof relies on the estimate 
$$ \mathbf{E}\Big(  {\nu} \{x \in [0,1]^d : \|T(x) - x\| \geqslant  \sqrt{d}2^k\} \Big) \leqslant  2^{-dk}\mathsf{E}S_{+}$$ 
where $S = (\nu(B) - \mu(B))$ for $B = [0,2^k)^d$ and recall that $S_+ = \max\{S,0\}$.  We refer the reader to the geometric proof of equation (11) in \cite{holroyd2009poisson} as it is exactly identical.} {Note that $\sqrt{d}2^k$ bounds the diameter of any $k$-level cube. In the above derivation, a crucial observation is that for  $x \in \nu$ to be unmatched or matched to a $\mu$-point at distance larger than $\sqrt{d}2^k$, it cannot be matched within the $k$-level cube $C$ containing $x$.    This can happen only if there are excess $\nu$ points than $\mu$ in the $k$-level cube $C$. Hence $x \in \nu$ has matching distance larger than $\sqrt{d}2^k$ only if $\nu(C) > \mu(C)$. Then relying on the mass transport principle and the fact that the random centering is independent of $\mu,\nu$, one can derive the above bound.}
Since $\mu,\nu$ have equal intensities, we can bound this via Cauchy-Schwarz and triangle inequality as follows.
$$ \mathbf{E}S_{+} \leqslant  \sqrt{\mathbf{E}(S^2)} \leqslant  \sqrt{2 \, (\, \var(\mu(B))+\var(\nu(B)) \, )} = \sqrt{2^{dk+1} \, ( \, \sigma_{\mu}(B) + \sigma_{\nu}(B) \, )},$$
where recall that  $\sigma_{\mu}(B) := |B|^{-1}\var(\mu(B))$ and similarly $\sigma_{\nu}(B)$. This gives the bound in \eqref{e:Exp_T_match} for $r = \sqrt{d}2^k$  and one can then extrapolate it to all $r$.}\\

\dy{ Now we justify the case of the shifted lattice $\mathbb{Z}^d + U$ with $U$ being uniformly distributed in $[0,1]^d$. Let $\nupl$ be the \RL{cloaked} stationary i.i.d. perturbed lattice in $d \geq 1$ as defined in Example \ref{ex:iidpertlattice}. Note that it has an integrable RPCM. We have proven that there exists a translation-invariant matching $T_1$ between $\mu$ and $\nupl$ (independently constructed on a probability space) such that its  typical distance $X$ satisfies $ \mathbf E w(X_1) < \infty $. By construction of $\nupl$, there exists a translation-invariant matching $T_2$ between $\mathbb{Z}^d + U$ and $\nupl$ such that its typical distance $X_2$ satisfies $X_2 \leq 2\sqrt{d}$ a.s.. Thus $T = T_1 \circ T_2$ gives a translation-invariant matching between $\mathbb{Z}^d + U$ to $\mu$ and using triangle inequality as above, we have that the typical distance $X$ satisfies $ \mathbf E w(X) < \infty $. }
\end{proof}
%
%Doing the same for $ \nu $, there is a mapping $ T'$ with similar properties between $ \nu $ and $\xi ': =  \mathbb{Z} ^{d} + U'$ for an independent variable $ U'$ with the law of $ U$. There is an obvious invariant mapping $ T''$ between $ \xi $ and $\xi '$ satisfying the same property. Combining $ T,T'',T'$
 % {using \cite[Lemma 2.11]{erbar2023optimal} and applying triangle inequality (for example, see proof of Lemma 2.12 in \cite{erbar2023optimal},} gives an invariant matching $ \mu \to \nu $ satisfying the condition
%  \footnote{need to add details on triangle inequality? {see now}}  , thereby concluding the proof.
  %\footnote{do not need that $ \mu $ and $ \nu $ are independent.{unclear} }
 % \xout{gluing the two couplings using [Lemma 2.11]{erbar2023optimal} and applying triangle inequality (for example, see proof of Lemma 2.12 in {erbar2023optimal}), we have that there exists a transport plan $m_n$ such that $\E[ \widetilde {\C}_{w}(\tmu _{n}; \tilde \nu_{n})] \leqslant cn$  for all $n \geqslant  1$}.

\begin{proof}[Proof of Theorem \ref{thm:hu}]
%
%\begin{enumerate}
%\item   \footnote{update}  {The integrability condition in the assumption and the definition of $\varepsilon$ as in \eqref{eq:def-eps} implies that the RHS in \eqref{eq:bd-hu-w} is finite via Fubini's theorem. So, we have that $\E[ \widetilde {\C}_{w}(\tmu _{n}; \tLeb _{n})] \leqslant cn$  for all $n \geqslant  1$. Now following the proof of Theorem \ref{thm:std} via Proposition \ref{p:matching_limit}, we obtain that there exists a matching $M$ whose typical distance satisfies $\E[w(X)] < \infty$.}

\noindent (1). \dy{Since our assumption on point processes $\mu,\nu$ for $d = 2$ is exactly the same as that in Theorem \ref{thm:Wbd-infinite-HU}(2), we obtain that $\E \widetilde W_2^2(\tmu _{n}, \tLeb _{n};\Lambda_n) \leqslant cn$  for all $n \geqslant  1$ and similarly for $\tilde \nu_n$. Now via triangle inequality as in Theorem \ref{thm:std}, we derive that for some finite constant $c$,
$$ \E \widetilde W_2^2(\tmu _{n}, \tilde \nu_{n};\Lambda_n) \leqslant cn, \, \forall n \geq 1.$$
Now using Proposition \ref{p:matching_limit}, we obtain that there exist point processes $ \mu ' (\equlaw \mu), \nu ' ( \equlaw \nu)$ on \RL{the same probability space}  and a translation-invariant matching $T$ between $\mu'$ and $\nu'$  whose  typical distance $X$ satisfies $ \mathbf E X^2 < \infty $.  Thus the conclusion follows for independent HU point processes $\mu,\nu$ with integrable RPCM and satisfying the logarithmic integrability condition.}. 

\dy{The case of shifted lattice $\Z^d + U$ can again be justified as at the end of the  proof of Theorem \ref{thm:std}. In particular, the stationary i.i.d. perturbed lattice $\nupl$ in Example \ref{ex:iidpertlattice} is hyperuniform and its RPCM satisfies the logarithmic integrability condition as well since it is compactly supported. Thus there exists a translation-invariant  matching $T_1$ between $\mu$ and $\nupl$ whose typical distance $X_1$ satisfies $\E X_1^2 < \infty$. Now using the canonical matching $T_2$ between $\Z^d + U$ and $\nupl$, we can justify existence of a translation-invariant matching $T : \Z^d + U \to \mu$ with typical distance $X$ satisfying $\E X^2 < \infty$.} \\

\noindent (2). \, For $d = 1$.  \RL{Assume that the number variance is bounded for $ \mu $ and $ \nu $}. Now using \eqref{e:M0-inf_matching2}, we obtain that there exists a translation-invariant matching $T$ such that the typical matching distance $X$ satisfies
$$ \pr(X \geqslant  r) \leqslant cr^{-1}, \, \, r \geq 1.$$
Notice that for $ r>1$, if $ X/(1 +  | \ln X | ^{ \gamma })\geqslant r$, then $ X\geqslant r$ \dy{and so $ X\geqslant r(1 + \ln r^{ \gamma })$. Now for $\gamma > 1$, substituting the above tail probability bound yields that}
\begin{align*}
\mathbf E \left(
\frac{ X}{1 +  | \ln X | ^{ \gamma }}
\right) & \leqslant 1 + \int_1^{\infty} \mathbf{P}\Big( \frac{ X}{1 +  | \ln X | ^{ \gamma }} \geq r \Big) \md r \\
& \leqslant 1 + \int_1^{\infty} \mathbf P (X\geqslant r(1 + (\ln r)^{ \gamma })) \md r <\infty \\
& \leqslant 1 + \int_1^{\infty} \frac{c}{r(1 + (\ln r)^{ \gamma })} \md r < \infty,
\end{align*}
\dy{which concludes the proof.}
\end{proof}

\section*{Acknowledgements} We are thankful to Rapha\"{e}l Butez, Manjunath Krishnapur, Thomas Lebl\'e and David Dereudre,  for sharing insight, interrogations and technical ideas about transport properties of point processes and the connection with perturbed lattices. The authors are immensely grateful to two anonymous referees for careful reading and numerous comments which have significantly improved the presentation. Thanks are also due to Jonas Jalowy whose comments alerted us about an issue in the proof of Proposition \ref{p:matching_limit}(2).
This work was initiated when DY visited Laboratoire MAP5 under CNRS fellowship and he thanks both the organizations for their support.  DY's research was also partially supported by SERB-MATRICS Grant MTR/2020/000470 and CPDA from the Indian Statistical Institute.

\bibliographystyle{abbrvnat}
\bibliography{transport_HU.bib}

\end{document}